%
%
\documentclass{amsart}
\usepackage{mathrsfs}
\usepackage[all]{xy}

\usepackage{wasysym}
\usepackage{amssymb}
\usepackage{comment}
\usepackage{mathtools}
\usepackage{mathbbol}

\usepackage{ifpdf}
\ifpdf 
  \usepackage[pdftex]{graphicx}
  \DeclareGraphicsExtensions{.pdf,.png,.jpg,.jpeg,.mps}
  \usepackage{pgf}
\else 
  \usepackage{graphicx}
  \DeclareGraphicsExtensions{.eps,.bmp}
  \DeclareGraphicsRule{.emf}{bmp}{}{}
  \DeclareGraphicsRule{.png}{bmp}{}{}
  \usepackage{pgf}
  \usepackage{pstricks}
\fi
\usepackage{epic,bez123}
\usepackage{wrapfig}

\DeclareMathAlphabet{\mathpzc}{OT1}{pzc}{m}{it}

\newtheorem{mainthm}{}

\newtheorem{thm}{Theorem}[section]
\newtheorem{prop}[thm]{Proposition}
\newtheorem{cor}[thm]{Corollary}
\newtheorem{lem}[thm]{Lemma}

\newtheorem*{claim}{Claim}

\newtheorem{conv}[thm]{Convention} 

\theoremstyle{definition}
\newtheorem{defn}[thm]{Definition}

\theoremstyle{remark}

\newtheorem*{rem}{Remark}
\newtheorem{example}[thm]{Example}








\DeclarePairedDelimiter\ceil{\lceil}{\rceil}





















\newcommand{\e}[1]{\omega(#1)}





\newcommand{\diam }[1]{{\textbf{diam}\big(#1\big)}}
\newcommand{\proj }{\pi}

\newcommand{\len }{\ell}




\newcommand{\ax}{\mathbf{Ax}}
\newcommand{\Teichax}{\mathbf{ax}}
\newcommand{\tax}{\mathscr{A}}


\usepackage[bookmarks=true, pdfauthor={YANG Wenyuan}]{hyperref}

\begin{document}

\title{Counting conjugacy classes in groups with contracting elements}

\author{Ilya Gekhtman}
\address{Faculty of Mathematics, Technion-Israel Institute of Technology, Haifa, Israel  3200003}
\email{ilyagekh@gmail.com}
\author{Wen-yuan Yang}
\address{Beijing International Center for Mathematical Research (BICMR), Beijing University, No. 5 Yiheyuan Road, Haidian District, Beijing, China}
\email{yabziz@gmail.com}

\thanks{W. Y. is supported by the National Natural Science Foundation of China (No. 11771022). I.G. was partially supported by the National Science and Engineering Research Council of Canada.}


\subjclass[2000]{Primary 20F65, 20F67}

\date{\today}

\dedicatory{}

\keywords{Contracting element, conjugacy growth function, genericity, stable length}

\begin{abstract}
In this paper, we derive an asymptotic formula for the number of  conjugacy classes of elements in a class of statistically convex-cocompact actions with contracting elements. Denote by  $\mathcal C(o, n)$ (resp. $\mathcal C'(o, n)$)   the set of (resp. primitive) conjugacy classes of algebraic length at most $n$ for a basepoint $o$. The main result is the following asymptotic formula:   
$$\sharp \mathcal C(o, n) \asymp \sharp \mathcal C'(o, n) \asymp \frac{\exp(\e Gn)}{n}.$$
 A similar formula holds for   conjugacy classes using stable length.  As a consequence of the formulae, the conjugacy growth series  is transcendental for all non-elementary relatively hyperbolic groups, graphical small cancellation groups with finite components. As by-product of the proof, we establish several useful properties for an exponentially generic set of elements.  In particular, it yields a positive answer to a question of J. Maher that an exponentially generic elements in mapping class groups have their Teichm\"{u}ller axis contained in the principal stratum.    
\end{abstract}

\maketitle

\setcounter{tocdepth}{1} \tableofcontents

\section{Introduction}
\subsection{Motivation}

Let $G$ be a countable group acting by isometries on a proper geodesic metric space $(\mathrm Y, d)$. Assume that the action of $G$ on $\mathrm Y$ is proper, so that for any basepoint $o \in \mathrm Y$, the set $N(o, n):=\{g \in G: d(o, go) \le n\}$  is  finite.  For any subset $X\subset G$, the \textit{growth rate} $\e X$ is defined as follows:
$$
\e X := \limsup\limits_{n \to \infty} \frac{\log \sharp X\cap N(o, n)}{n}.
$$
The quantity $\e X$ does not depend on the basepoint $o$.  We assume that $\e G$ also called critical exponent for the action is finite in this paper.

The motivating example is the   action of a finitely generated group $G$ on its Cayley graph $(\mathrm Y, d)$ with respect to  a finite generating set $S$. Here, $d$ is the word metric and, by a subadditive inequality, the critical exponent is a true limit, which is the \textit{growth rate} of $G$ with respect to $S$.

There has been considerable interest in studying the number of conjugacy classes of $G$ with stable translation length at most $n$ on $\mathrm Y$, in particular its asymptotics as $n\to \infty$ (see below for precise definitions). 

When $\mathrm Y$ is a negatively curved contractible manifold, the problem of counting conjugacy classes  is equivalent to counting closed geodesics in the quotient $\mathrm Y/G$. Indeed the conjugacy class of a loxodromic element $g\in G$  defines a closed geodesic on $\mathrm Y/G$ which is the image in $\mathrm Y/G$ of the translation axis of $g$, and  its stable length is precisely the length of the associated geodesic. The conjugacy class is called \textit{primitive} if $g$ is not a proper power of any element of $G$, in which case the associated closed geodesic is primitive. 
In his 1970 thesis \cite{MarThesis}, Margulis established a precise asymptotic formula for the set $\mathcal C'(n)$ of primitive closed geodesics with length less than $n$ as follows
$$
\sharp \mathcal C'(n) \sim \frac{\exp(\mathbf h n)}{\mathbf hn}
$$
where $\mathbf h=\e {\pi_1M}$ is the topological entropy of geodesic flow on the compact manifold $M$. Margulis' result has been generalized to various actions which display some hyperbolicity.
An analogous formula for closed geodesics has been obtained for the following actions:

\begin{enumerate}\label{mainexamples}

\item
Quotients of CAT($-1$) space by a geometrically finite groups of isometries admitting
a finite Bowen-Margulis-Sullivan measure on the unit tangent bundle,   \cite[Th\'eor\`eme 5.2]{Roblin}. 
\item
Compact rank-1 manifolds,   \cite{Knieper2}.

\item
The moduli space of closed orientable surfaces of genus $\ge 2$ endowed with the Teichm\"{u}ller metric (which is the quotient of Teichm\"{u}ller space by the action of the mapping class group),
 \cite{EM}, \cite{EMR}.

\item
Covers of the above moduli space associated to convex-cocompact subgroups of the above mapping class groups, \cite{Gekht}.
\end{enumerate}

Beyond the manifold setting, Coornaert and Knieper \cite{CoK1}, \cite{CoK2} proved that for hyperbolic groups acting on their Cayley graphs, the number of primitive conjugacy classes of stable length at most $n$ is up to a bounded multiplicative constant equal to $e^{hn}/n$ where $h$ is the exponential growth rate of the Cayley graph (see also Antolin and Ciobanu\cite{AntCio} for all conjugacy classes). 

Motivated by the work of Margulis,  Guba and Sapir \cite{GubaSapir}  initiated a systematic study of  conjugacy growth function in groups, namely  the number $\sharp \mathcal C(1, n)$ of conjugacy classes intersecting a ball centered at $1$ of radius $n$. Many examples of groups  were found to have exponential conjugacy growth $\sharp \mathcal C(1, n) \ge a^n$ for some $a> 1$, including non virtually solvable  linear groups \cite{BCLM},  non-elementary acylindrically hyperbolic groups \cite{HOsin}, and so on. We refer the reader to   \cite{GubaSapir} and many related references therein.

\subsection{Statement of main results}
The goal of the present paper is to establish coarse multiplicative asymptotic formulae describing growth of conjugacy classes  for a   more general class of actions called \textit{statistically convex-cocompact actions} with a \textit{contracting element}, encompassing the above examples.

The notion of a contracting element plays a significant role in counting conjugacy classes. An element $g\in G$ is called \textit{contracting} if for some (or any) basepoint $o\in \mathrm Y$,  the \textit{stable length} defined by  
$$
\tau(g):=\lim_{n\to\infty} \frac{d(o, g^no)}{n}
$$
is positive and the subset $\langle g\rangle \cdot o$ is contracting in $\mathrm Y$. Here a  subset $X\subset \mathrm Y$  is \textit{contracting} if it enjoys a certain property suggestive of negative curvature: namely  that any metric ball $B$ outside $X$ has the  uniformly bounded shortest projection to $X$. We remark that $\tau(g)>0$ if and only if the map $n\in \mathbb Z\mapsto g^no\in \mathrm Y$ is a quasi-isometric embedding.

To make precise the notion of counting conjugacy classes, we now describe two ways to assign a length to a conjugacy class: stable length and algebraic length.

By definition, $\tau(g)$ does not depend on the basepoint, so it is a conjugacy invariant. Denote by $[g]$ the set of elements conjugate to $g$. We then have a well-defined  function $\tau[g]$ on the set of conjugacy classes $[g]$. 

We  shall fix a basepoint $o\in \mathrm Y$ throughout.  The \textit{algebraic length} $\len_o[g]$ of a conjugacy class is defined as $\len_o [g]:=\inf\{d(o, g'o): g'\in [g] \}$. This clearly depends on the choice of the basepoint. By the subadditivity, we see that $$\tau[g]=\inf\{\frac{d(g^no, o)}{n}: n\ge 1\}.$$ 
Thus, for every $g\in G$, we   have $\tau[g]\le\len_o[g]$.

In many geometric examples, the stable length is realized by the algebraic length for a certain basepoint.  This is the case when  $\mathrm Y$ is the universal cover of a compact Riemannian manifold,   a CAT(0) space, and in Examples \ref{mainexamples}.

In \cite{YANG10}, the second-named author defined a class of statistically convex-cocompact actions, which is a dynamical generalization of convex-cocompact actions studied in many different settings.  Given constants $0\le M_1\le M_2$, let $\mathcal O_{M_1, M_2}$ be the set  of elements $g\in G$ such that there exists some geodesic $\gamma$ between $\gamma_-\in B(o, M_2)$ and $\gamma_+\in B(go, M_2)$ with the property that the interior of $\gamma$ lies outside $N_{M_1}(Go)$. 

\begin{defn}[statistically convex-cocompact action]\label{StatConvex}
If there exist two positive constants  $M_1, M_2>0$ such that $\e {\mathcal O_{M_1, M_2}} < \e G<\infty$, then the action of $G$ on $\mathrm Y$ is called \textit{statistically convex-cocompact (SCC)}.  

\end{defn} 

Among other results in \cite{YANG10}, let us point out that a SCC action with a contracting element has the \textit{purely exponential growth} (PEG) property
$$
\sharp N(o, n)\asymp \exp(\e G n),
$$
where $\asymp$ denotes the two sides differ by a multiplicative constant.

We say that an element $g\in G$ is \textit{primitive} if it cannot be written as a proper power $g=g_0^n$ for $|n|\ge 2$ and $g_0\in G$. If  $g$ is a contracting element, we  shall also  consider a stronger notion of primitivity.  Let $E(g)$ be the maximal elementary subgroup containing $g$. By a theorem of Stallings there exists a subgroup $E^+<E(g)$ of index at most two and a finite group $F$ with the following exact sequence
$$
1\to F\to E^+\stackrel{\phi}{\to} \mathbb Z{\to} 1.
$$
Any element $h\in E^+$ in the preimage $\phi^{-1}(\pm 1)$ is called \textit{strongly primitive}. A strongly primitive (contracting) element is primitive by Lemma \ref{StrongPrimitive}, but the converse may not be true.

Our main theorem  gives asymptotic formulae using algebraic length and stable length for primitive,  as well as for all, conjugacy classes. Let $$\mathcal C(o, n)=\{[g]: 0\le \len_o[g]\le n\},$$
$$\mathcal C(n)=\{[g]: 0<\tau[g]\le n\},$$  and $\mathcal {C'}(o, n) \subset \mathcal C(o, n)$ and $\mathcal {C'}(n)\subset \mathcal C(n)$ denote the subsets of primitive (or strongly primitive)  ones respectively.

\begin{mainthm}\label{mainthm}
Suppose that a non-elementary group $G$ admits a SCC action on a proper geodesic metric space $(\mathrm Y, d)$ with a contracting element and $\e G<\infty$.   Let $o\in\mathrm Y$ be a basepoint. There exists a constant $D=D(o)>0$ such that the following statements hold:
\begin{enumerate} 
 \item
$\displaystyle\sharp \mathcal C(o, n) \asymp_D \sharp \mathcal {C'}(o, n) \asymp_D \frac{\exp(\e G n)}{n}.$
\item
$\displaystyle{
\sharp \big(\mathcal C(n) \cap \mathcal C(o, n)\big) \asymp_D\sharp \big(\mathcal {C'}(n)\cap \mathcal C(o, n)\big) \asymp_D \frac{\exp(\e G n)}{n}}.
$

 \item
 $\displaystyle\frac{\sharp \big(\mathcal {C'}(n)\cap\mathcal C(o, n)\big)}{\sharp \big(\mathcal C(n)\cap \mathcal C(o, n)\big)} \to 1$ and $\displaystyle\frac{\sharp \mathcal {C'}(o, n)}{\sharp \mathcal C(o, n)} \to 1$ exponentially fast.
\end{enumerate} 
\end{mainthm}
From now on,  a formula as above in (1) (2) will be referred to as a \textit{prime conjugacy growth formula}.

\begin{rem}
Most of the forementioned results count conjugacy classes of bounded stable length. Our result also considers the algebraic length, which depends on the basepoint, in the setting of non-cocompact actions. We remark that  the conjugacy growth formula for algebraic length is not a direct consequence of the one using stable length.
 
\end{rem}

Regarding to the complicated formula in (2), we wish to explain some subtleties about counting   conjugacy classes with respect to stable length. 

For a general proper action,  measuring conjugacy classes in stable length has the weakness  that the conjugacy growth function might not be even  defined as there may be infinitely many conjugacy classes with bounded stable length. For example,  Conner \cite{Conner} constructed examples of groups with stable lengths accumulating at 0 (the opposite is called \textit{translation discrete} there). See \cite{IKap} for examples with discrete spectrum of stable lengths. 

Examples of relatively hyperbolic groups whose stable length spectrum is not discrete can be easily built  by taking a free product of two groups with this property. This forces us to consider the formula with stable length more carefully in \ref{mainthm}.  From a geometric point of view, one may wonder whether these conjugacy classes coming from parabolic subgroups are not interesting  since they degenerate on the quotient manifolds (e.g. geometrically finite hyperbolic manifolds). This perhaps motivates one to count only loxodromic conjugacy classes associated with closed geodesics. However, this  still does not correct the formula: there are examples of hyperbolic manifolds with fundamental groups satisfying the  SCC condition but containing infinitely many closed geodesics with bounded length.  See \textsection \ref{SCCinfty} for construction of such examples. Moreover, such examples could exist in the class of acylindrical actions on  hyperbolic spaces. See Lemma \ref{InftyConjClassesLem}.  

 
Nevertheless,  there are many new settings in which we can obtain a satisfactory formula with stable length.  
\subsection{Applications}

We first consider the class of CAT$(0)$ groups and its subclass of cubical groups. An isometry of a proper CAT$(0)$ space is contracting with respect to the CAT$(0)$ metric exactly when it has a rank-1 axis in the space, ie.: a geodesic which does not bound a flat half-plane (\cite[Theorem 5.4]{BF2}). If the space is a CAT$(0)$ cube complex, such an isometry is also contracting with respect to the $\ell^1$-metric on the $1$-skeleton, which coincides with the standard word metric for a right-angled Artin or Coxeter group. 
\begin{cor}[CAT(0) groups]\label{CAT0Thm} 

\begin{enumerate}
\item
A non-elementary group $G$ acting geometrically on a CAT$(0)$ space $X$ with a rank-1 element satisfies the prime conjugacy growth formulae for algebraic length and stable length with respect to the $CAT(0)$ metric on $X$.

\item
A non-elementary group $G$ acting geometrically on a CAT$(0)$ cube complex  with a rank-1 element satisfies the prime conjugacy growth formulae for algebraic length and stable length with respect to the $\ell^1$ metric on the $1$-skeleton. 

In particular, this holds for Right angled Artin or Coxeter group with the standard word metric, provided that the group is not virtually a product of nontrivial groups. 
 
\end{enumerate}
\end{cor}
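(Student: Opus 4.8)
The plan is to deduce both parts of the corollary from the Main Theorem; the work reduces to checking its two hypotheses (a statistically convex-cocompact action and a contracting element) and then sharpening its stable-length conclusion.

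First, a geometric action on a geodesic space is automatically SCC with $\e G<\infty$: cocompactness gives $R>0$ with $N_R(Go)=\mathrm Y$, so for $M_1\ge R$ the set $\mathcal O_{M_1,M_2}$ reduces to the finite collection of $g$ with $go\in B(o,2M_2)$, whence $\e{\mathcal O_{M_1,M_2}}=0<\e G$, while $\e G<\infty$ follows from a packing estimate on the proper, cobounded space $\mathrm Y$ (both facts are already recorded in \cite{YANG10}). Next, for part (1) a rank-$1$ isometry $g$ of a CAT($0$) space has $\tau(g)>0$ and contracting axis $\langle g\rangle o$ — this is standard (Bestvina--Fujiwara; Charney--Sultan) — so $g$ is a contracting element in the sense of the paper, and the Main Theorem applies verbatim. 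For part (2) I instead apply the Main Theorem to the action of $G$ on the $1$-skeleton equipped with the $\ell^1$ (combinatorial) metric, which is again geometric, hence SCC with $\e G<\infty$; the input needed here is the geometric fact announced just before the corollary, namely that a CAT($0$)-rank-$1$ element of a cube complex is contracting for the $\ell^1$-metric, together with the observation that such a $g$ is combinatorially hyperbolic (its orbit is $\ell^1$-unbounded, so it cannot stabilize a cube; were some power to stabilize a cube it would fix a vertex, forcing $g$ to fix a point and contradicting CAT($0$)-hyperbolicity), whence $\tau_{\ell^1}(g)\ge 1>0$. In both cases the Main Theorem already gives the pointed-length formulae and the stable-length formulae intersected with $\mathcal C(o,n)$.

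It remains to remove the constraint $\cap\,\mathcal C(o,n)$, i.e. to prove $\sharp\mathcal C(n)\asymp\sharp\mathcal C'(n)\asymp\exp(\e G n)/n$; the point is that in these settings stable length is coarsely realized by pointed length. A geometric action on a CAT($0$) space is by semisimple isometries (Bridson; the standard argument via properness and cocompactness), and every automorphism of a CAT($0$) cube complex is combinatorially semisimple for the $\ell^1$-metric (Haglund). Hence a loxodromic $g$ possesses an axis $A$ with $d(x,gx)=\tau(g)$ for $x\in A$; moving $x$ into a fixed compact set by some $h\in G$ yields $d(o,(hgh^{-1})o)\le\tau(g)+2C$ for a uniform $C$, so $\mathcal C(n)\subseteq\mathcal C(o,n+2C)$ and likewise $\mathcal C'(n)\subseteq\mathcal C'(o,n+2C)$. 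These right-hand sides are finite of cardinality $\asymp\exp(\e G n)/n$ by part (1), while $\mathcal C(n)\supseteq\mathcal C(n)\cap\mathcal C(o,n)$ supplies the matching lower bound from the Main Theorem; since elliptic elements have $\tau=0$ and so contribute nothing to $\mathcal C(n)$, the stable-length formulae follow (and the density statement of Main Theorem(3) transfers in the same way).

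Finally, for the named examples: when a right-angled Artin group $A_\Gamma$ (resp.\ Coxeter group $W_\Gamma$) is not virtually a nontrivial direct product, its Salvetti (resp.\ Davis) complex is an essential, irreducible, non-Euclidean CAT($0$) cube complex without free faces carrying a geometric action of $G$, so a rank-$1$ element exists by the Caprace--Sageev rank-rigidity theorem (or directly from standard structure theory of RAAGs and Coxeter groups); outside the trivial finite or virtually cyclic cases such a $G$ is non-elementary, so parts (1) and (2) apply and yield the formulae for the Salvetti/Davis (CAT($0$)) metric and for the standard word metric respectively. Essentially everything is bookkeeping on top of the Main Theorem; the single genuinely geometric ingredient is the passage from CAT($0$)-rank-$1$ to $\ell^1$-contracting used in part (2), which I would establish (or cite) separately.
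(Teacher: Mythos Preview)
Your proposal is correct and follows essentially the same route as the paper: verify that a geometric action is SCC, that rank-$1$ elements are contracting (in the CAT($0$) metric for part~(1) and in the $\ell^1$-metric for part~(2), the latter being the paper's Lemma~\ref{rank1l1}), apply the Main Theorem for the pointed-length formula, and then upgrade to stable length via semisimplicity (Bridson for CAT($0$), Haglund for the combinatorial metric) together with cocompactness to get $\ell_o[g]\le\tau[g]+O(1)$ for every rank-$1$ element. The paper's remark after the corollary singles out exactly this last step as the additional ingredient beyond the Main Theorem; your inclusion $\mathcal C(n)\subseteq\mathcal C(o,n+2C)$ is precisely how the paper uses it.
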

\begin{rem}
As mentioned in Examples \ref{mainexamples}, the analogue of   (1) for smooth manifolds of non-positive curvature is due to Knieper \cite{Knieper2}. His method uses conformal densities on the boundary, whereas our methods are completely geometric and elementary and do not involve any measure theory.

The formula for algebraic length is immediate by  \ref{mainthm}. But for the stable length, it needs an additional ingredient that for \textit{every} rank-1 element, its stable length coincides with algebraic length up to a uniform error.  This might not be true in other classes of groups. 
\end{rem}

Given a relatively hyperbolic group $(G, \mathcal P)$,  a \textit{hyperbolic} element is by definition an infinite order element not conjugated into any subgroup in $\mathcal P$. For a hyperbolic element, we note that the stable length coincides with algebraic length, up to a uniform error. We thus obtain the following corollary.

\begin{cor}\label{RHThm}
Let $(G, \mathcal P)$ be a relatively hyperbolic group. Then for the action on the Cayley graph, the prime conjugacy growth formulae holds for all conjugacy classes with algebraic length, and for conjugacy classes of contracting elements with stable length.
\end{cor}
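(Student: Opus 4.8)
The plan is to derive Corollary~\ref{RHThm} from the Main Theorem applied to the action of $G$ on its Cayley graph $\Gx$ for a finite (relative) generating set $S$; throughout I assume $\GP$ is non-elementary, since otherwise $\e G=0$ and the stated formula is not meant to hold. First I would verify the hypotheses. The graph $\Gx$ is locally finite, so the action is proper and $\e G<\infty$. It is also vertex-transitive, hence cocompact, so $N_{M_1}(Go)=\Gx$ already for $M_1\ge 1$; a geodesic witnessing $g\in\mathcal O_{M_1,M_2}$ would then be forced to have empty interior, so its two endpoints --- hence $o$ and $go$ --- lie within $2M_2$ of each other, making $\mathcal O_{M_1,M_2}$ finite and $\e{\mathcal O_{M_1,M_2}}\le 0<\e G$; thus the action is SCC. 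Finally, a non-elementary relatively hyperbolic $\GP$ contains a hyperbolic element $h$, and it is well known (see \cite{YANG10}) that $\langle h\rangle\cdot o$ is contracting in $\Gx$, so the action has a contracting element. The Main Theorem now yields $\sharp\mathcal C(o,n)\asymp\sharp\mathcal {C'}(o,n)\asymp\exp(\e Gn)/n$, which is the pointed-length part of the corollary.

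For the stable-length part, the geometric input I would isolate is a \emph{uniform} comparison: there is $C=C(o)\ge 0$ with $\tau(g)\le\len_o[g]\le\tau(g)+C$ for every hyperbolic $g$, the left inequality being automatic. The idea is that every hyperbolic element of $\GP$ is contracting in $\Gx$ with constants bounded independently of $g$ --- this uniformity is where the peripheral structure is used, e.g. through the uniform stability of quasigeodesic axes of hyperbolic elements, or through the Bounded Coset Penetration property in the relatively hyperbolic (hence Gromov-hyperbolic) graph $\G$. Granting this, one conjugates $g$ so that its axis $\langle g\rangle\cdot o$ passes within a uniformly bounded distance of $o$, and then a standard thin-quadrilateral/shortest-projection estimate for contracting sets bounds $d(o,g'o)$ for that conjugate $g'$ by $\tau(g)+C$. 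I expect this to be the main obstacle: the inequality is soft for a single contracting element, and the whole content is its uniformity over \emph{all} hyperbolic elements --- precisely the subtlety flagged in the remark following Corollary~\ref{CAT0Thm}.

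The remainder is bookkeeping. For each $m$ put $H_m=\{[g]:g\text{ hyperbolic},\ \len_o[g]\le m\}$; using $\tau[g]\le\len_o[g]$, the positivity $\tau(g)>0$ for hyperbolic $g$, and the comparison, one obtains $H_m\subseteq\{[g]:g\text{ hyperbolic},\ 0<\tau[g]\le m\}\subseteq H_{m+C}$. Since an element of $\GP$ is contracting in $\Gx$ exactly when it is hyperbolic, the exponential genericity of contracting conjugacy classes (among the by-products of the proof of the Main Theorem) makes the non-hyperbolic classes exponentially negligible, so $\sharp H_m\asymp\sharp\mathcal C(o,m)\asymp\exp(\e Gm)/m$; combined with the sandwich this gives $\sharp\{[g]:g\text{ hyperbolic},\ 0<\tau[g]\le n\}\asymp\exp(\e Gn)/n$. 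Restricting further to primitive classes and invoking part~(3) of the Main Theorem (exponential genericity of primitive classes) yields the same asymptotics for the primitive count, which completes the proof.
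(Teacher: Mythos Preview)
Your approach matches the paper's: verify the Main Theorem hypotheses (cocompact action $\Rightarrow$ SCC; hyperbolic elements are contracting), then reduce the stable-length formula for hyperbolic classes to the pointed-length formula via a uniform bound $|\tau[g]-\len_o[g]|\le B$ over all hyperbolic $g$. The paper obtains that bound slightly differently: it takes a bi-infinite geodesic $\gamma$ between the Bowditch-boundary fixed points $h_\pm$ of $h$, picks a uniformly $(\epsilon,R)$-transitional point $x$ on $\gamma$ (such points exist with constants depending only on $(G,\mathcal P)$), observes that $\langle h\rangle x$ stays uniformly close to $\gamma$, and then invokes the argument of Lemma~\ref{StableAxisLength}. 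Your route through uniform contracting constants for hyperbolic elements would also work and is essentially equivalent.

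One caution on your bookkeeping: the assertion that an element is contracting in $\Gx$ \emph{exactly} when it is hyperbolic need not hold for an arbitrary peripheral structure. If some $P\in\mathcal P$ itself contains a Morse element (for instance if $P$ is non-elementary hyperbolic, as nothing in the statement forbids), that element is parabolic for $(G,\mathcal P)$ yet contracting in $\Gx$, since peripheral cosets are strongly quasi-convex and Morse-in-Morse is Morse. This does not break the argument: the upper bound $\sharp H_m\le\sharp\mathcal C(o,m)$ is immediate, and for the lower bound one can bypass the equivalence by appealing directly to the construction in \S\ref{Section4} with the auxiliary contracting element $f$ chosen to be hyperbolic. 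Each produced class $[fg]$ then has a periodic admissible path passing through long segments of $\ax(f)$, which has bounded projection to every peripheral coset; this forces $fg$ to be hyperbolic, and the lower bound $\sharp H_m\succ\exp(\e Gm)/m$ follows.
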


We note that the class of  the cubical groups and relatively hyperbolic groups contains many groups which are not Gromov hyperbolic. 
We next consider the class of graphical small cancellation groups; this is a class containing many groups which are not relatively hyperbolic, see \cite{GruS} for many examples.
Graphical small cancellation groups contain contracting elements with respect to actions on their Cayley graphs by \cite[Theorem 5.1]{ACGH} and thus \ref{mainthm} applies to count their conjugacy classes.
\begin{cor}\label{GSCThm}
The prime conjugacy growth formula for  algebraic length holds for graphical small cancellation groups with finite components on the Cayley graph with respect to small cancellation presentation.
\end{cor}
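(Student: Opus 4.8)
The plan is to obtain Corollary~\ref{GSCThm} as a direct application of part (1) of \ref{mainthm} to the action of $G$ on its Cayley graph $\Gx$, where $\langle S\mid \Gamma\rangle$ is the graphical small cancellation presentation, $S$ is finite, and $\Gamma$ is a disjoint union of finite labeled graphs (this is what ``finite components'' refers to). So the work consists of checking the hypotheses of \ref{mainthm}: that $G$ is non-elementary, that $\Gx$ is a proper geodesic space with $\e G<\infty$, that the action is statistically convex-cocompact, and that it carries a contracting element. The second item is bookkeeping: since $S$ is finite the word metric is proper, each ball $N(o,n)$ is finite, and the growth rate $\e G$ is finite. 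For the first item I would simply take $G$ non-elementary, which is the only interesting case --- the claimed formula fails already for virtually cyclic groups, and a graphical $\mathrm{Gr}'(1/6)$ group with finite components that is not virtually cyclic is acylindrically hyperbolic, hence non-elementary and of exponential growth, so that in particular $\e G>0$ (cf.\ \cite{GruS}).

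The existence of a contracting element for the action on $\Gx$ is exactly \cite[Theorem 5.1]{ACGH}, as already recorded in the statement, so the only genuine verification is that the action on $\Gx$ is SCC in the sense of Definition~\ref{StatConvex}. The point is that this action is cocompact, and that a cocompact proper action is automatically SCC: take $M_1=1$, so that $N_{M_1}(Go)=\Gx$ because every point of $\Gx$ lies within distance $1$ of the vertex set $Go$; then an element $g\in\mathcal O_{M_1, M_2}$ admits a geodesic between $B(o,M_2)$ and $B(go,M_2)$ whose interior avoids all of $\Gx$, so that interior is empty, the two balls meet, and $d(o,go)\le 2M_2$. Hence $\mathcal O_{1, M_2}$ is finite, $\e{\mathcal O_{1, M_2}}=0<\e G<\infty$, and the action is SCC. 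With all hypotheses in hand, \ref{mainthm}(1) applied with basepoint $o=1$ gives
\[
\sharp\mathcal C(o,n)\asymp_D\sharp\mathcal{C'}(o,n)\asymp_D\frac{\exp(\e G n)}{n},
\]
which is the asserted prime conjugacy growth formula with pointed length.

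I do not expect a serious obstacle --- among the three corollaries this is the most immediate consequence of \ref{mainthm}. The one substantive point is why only the \emph{pointed}-length formula is claimed here, in contrast with the cubical groups of Corollary~\ref{CAT0Thm}(2) or the hyperbolic elements of Corollary~\ref{RHThm}: in the graphical small cancellation setting there is no mechanism forcing the stable length of an arbitrary contracting element to agree, up to a uniform additive error, with its pointed length, so the stable-length statement is simply not available by this argument. A secondary subtlety worth spelling out in the write-up is that finiteness of the components of $\Gamma$ is exactly what makes everything fit together: it is what allows $S$ to be taken finite --- so that $\e G<\infty$ and the balls $N(o,n)$ are finite, as \ref{mainthm} requires --- while the small cancellation hypothesis still yields a contracting element via \cite{ACGH}.
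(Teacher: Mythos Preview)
Your proposal is correct and follows the same route as the paper: the paper does not give a separate proof of this corollary beyond the sentence in the introduction that graphical small cancellation groups with finite components contain contracting elements by \cite[Theorem 5.1]{ACGH}, so that \ref{mainthm} applies to the (cocompact, hence SCC) action on the Cayley graph. Your write-up simply spells out the implicit hypotheses---non-elementariness, properness, finiteness of $\e G$, and the cocompact-implies-SCC argument---and your explanation for why only the pointed-length formula is asserted matches the paper's own remark following the corollary.
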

\begin{rem}
By   \cite[Lemma 5.3]{ACGH}, some power of every contracting element preserves a geodesic. However, we do not know whether the stable length can be realized by algebraic length, without raising the element to a power.  So a formula for stable length is not available. 
\end{rem}


Since the stable length of a pseudo-Anosov element coincides with the length of a closed geodesic, \ref{mainthm} applies to count closed geodesics on certain covers of moduli space corresponding to subgroups acting by a SCC action on Teichm\"{u}ller spaces. Convex-cocompact subgroups in the sense of Farb and Mosher \cite{FarbMosher} are obviously SCC, for which the first-named author \cite{Gekht} previously obtained a  precise formula  of closed geodesics. In \cite[Proposition 6.6]{YANG10}, examples of non-convex-cocompact SCC actions are constructed out of  subgroups generated by disjoint Dehn twists and a sufficiently high power of a pseudo-Anosov element.

\begin{cor}\label{ModThm}
The prime conjugacy growth formula holds  for closed geodesics on the cover of the moduli space associated with subgroups in mapping class groups on Teichm\"{u}ller space constructed in \cite{YANG10}.
\end{cor}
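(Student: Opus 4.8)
The plan is to read the statement off the Main Theorem via the standard dictionary between conjugacy classes of pseudo-Anosov mapping classes and closed geodesics on the cover; the only genuinely new input will be a uniform comparison between pointed length and stable length for pseudo-Anosov elements. Let $H$ be one of the subgroups of a mapping class group produced in \cite[Proposition 6.6]{YANG10}, acting on the corresponding Teichm\"uller space $(\mathcal T,d_{\mathcal T})$ with the Teichm\"uller metric, and fix $o\in\mathcal T$. First I would check the hypotheses of the Main Theorem: by \cite[Proposition 6.6]{YANG10} the action $H\act\mathcal T$ is a non-elementary SCC action with $\e H<\infty$, and the prescribed high power $\phi^N$ of the pseudo-Anosov generator is a contracting element because a pseudo-Anosov acts along a contracting Teichm\"uller geodesic. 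Thus the Main Theorem applies to $H$ with basepoint $o$.

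Next I would set up the dictionary. For these subgroups every infinite-order reducible element is conjugate into the abelian subgroup generated by the disjoint Dehn twists, which has vanishing Teichm\"uller translation length; hence $\tau[g]>0$ forces $g$ to be pseudo-Anosov, and every class counted in $\mathcal C(n)$ or $\mathcal {C'}(n)$ is pseudo-Anosov. Conversely a pseudo-Anosov $g$ preserves a unique Teichm\"uller geodesic $\ax_g$ — the axis through its stable and unstable foliations — translating along it by exactly $\tau(g)$, and the image of $\ax_g$ in the cover $\mathcal T/H$ is a closed geodesic of length $\tau(g)$, which is primitive if and only if $[g]$ is. This identifies $\mathcal C(n)$ (resp. $\mathcal {C'}(n)$) with the closed (resp. primitive closed) geodesics of length at most $n$ on $\mathcal T/H$, so the corollary amounts to the statement
$$\sharp \mathcal {C'}(n)\asymp\sharp \mathcal C(n)\asymp\frac{\exp(\e H n)}{n}.$$

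From here I would bridge to the Main Theorem. Part (2) gives $\sharp(\mathcal {C'}(n)\cap\mathcal C(o,n))\asymp\sharp(\mathcal C(n)\cap\mathcal C(o,n))\asymp\exp(\e H n)/n$, and since $\mathcal {C'}(n)\supseteq\mathcal {C'}(n)\cap\mathcal C(o,n)$ and $\mathcal C(n)\supseteq\mathcal C(n)\cap\mathcal C(o,n)$ this already yields the two lower bounds. For the matching upper bounds I would establish the \textbf{Claim}: there is a constant $C=C(o)>0$ with $\len_o[g]\le\tau(g)+C$ for every pseudo-Anosov $g\in H$. Granting the Claim, $\mathcal C(n)=\mathcal C(n)\cap\mathcal C(o,n+C)$ and $\mathcal {C'}(n)=\mathcal {C'}(n)\cap\mathcal C(o,n+C)$, so applying part (2) at $n+C$ and absorbing the additive constant gives $\sharp\mathcal C(n),\,\sharp\mathcal {C'}(n)\lesssim\exp(\e H n)/n$; combined with $\mathcal {C'}(n)\subseteq\mathcal C(n)$ this proves the displayed formula.

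The hard part will be the Claim; this is the extra ingredient, analogous to those needed for CAT(0) and relatively hyperbolic groups and unavailable for graphical small cancellation groups. Since $\tau[g]\le\len_o[g]$ always, only the upper bound is at issue. Writing $\len_o[g]=\inf_{p\in Ho}d_{\mathcal T}(p,gp)$ and using that $g$ translates along $\ax_g$ by exactly $\tau(g)$, one has $d_{\mathcal T}(p,gp)\le\tau(g)+2\,d_{\mathcal T}(p,\ax_g)$ for every $p$, so it suffices to show the orbit $Ho$ meets a uniformly bounded neighbourhood of the axis $\ax_g$ of every pseudo-Anosov $g\in H$. For convex-cocompact subgroups in the sense of \cite{FarbMosher} this is immediate from quasiconvexity of the orbit (equivalently, from compactness of the convex core of $\mathcal T/H$ with $o$ in it). For the general examples of \cite[Proposition 6.6]{YANG10}, where the thin directions of $\mathcal T/H$ are carried by the abelian Dehn-twist subgroup, a pseudo-Anosov axis can leave a bounded neighbourhood of $Ho$ only by running deep into such a twist thin part, where the orbit itself is present through the twist powers, so conjugation of $g$ by a suitable twist power brings a point of $Ho$ back to within bounded distance of the axis. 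More robustly, I would try to argue directly from the defining inequality $\e{\mathcal O_{M_1,M_2}}<\e H$: the function $p\mapsto d_{\mathcal T}(p,Ho)$ is $g$-periodic of period $\tau(g)$ along the contracting geodesic $\ax_g$, so if $\ax_g$ avoided $N_{M_1}(Ho)$ along one full period it would avoid it entirely, and then the projection and contraction estimates would attach more than $\exp((\e H-\epsilon)\ell)$ elements of $\mathcal O_{M_1,M_2}$ to subsegments of $\ax_g$ of length $\ell$ for any fixed $\epsilon>0$ and all large $\ell$, contradicting $\e{\mathcal O_{M_1,M_2}}<\e H$. Any of these routes yields the Claim, and with it the corollary.
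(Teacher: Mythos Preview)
Your overall architecture is exactly the paper's: reduce to the Main Theorem part (2), and isolate as the one extra ingredient the \textbf{Claim} that $\len_o[g]\le\tau(g)+C$ for every pseudo-Anosov $g\in H$, which you correctly reduce to showing that the Teichm\"uller axis of every such $g$ meets a uniform neighbourhood of $Ho$.

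The gap is in your justification of the Claim. Your route (b), deriving it from the SCC inequality $\e{\mathcal O_{M_1,M_2}}<\e H$ alone, does not work. A single axis $\ax_g$ avoiding $N_{M_1}(Ho)$ does not manufacture exponentially many elements of $\mathcal O_{M_1,M_2}$: the elements you can attach to subsegments of $\ax_g$ are essentially powers of $g$ (or their conjugates), which is linear, and there are no nearby orbit points along $\ax_g$ to serve as endpoints of $\mathcal O$-type segments. More decisively, the paper itself exhibits SCC actions (\S\ref{SCCinfty}) with infinitely many closed geodesics of bounded length; for such actions there are axes arbitrarily far from the orbit, so the Claim is simply false under SCC alone. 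The remark after Theorem \ref{CompactSupport} makes this explicit: whether every SCC cover of moduli space satisfies the analogous compactness is left open.

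Your route (a) is closer in spirit but not a proof as written. The paper's actual argument is structural and specific to the construction in \cite[Proposition 6.6]{YANG10}: in proving $H=A\star\langle p\rangle$ one builds, for every nontrivial word $g$, a periodic admissible path that alternates along uniformly long segments of translates of $\ax(p)$. By Proposition \ref{admissible} this forces the genuine Teichm\"uller axis $\ax_g$ to pass within a uniform distance of some $h\cdot\ax(p)\subset N_R(Ho)$ for a fixed $R$. That is the fact the introduction flags as ``evident from the construction in \cite{YANG10}''; you should invoke it directly rather than appeal to twist thin parts or to SCC.
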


We remark that this corollary is  not an immediate consequence of \ref{mainthm}. It requires an additional fact about the subgroups $\Gamma<Mod(S)$ at hand which is evident from the construction in \cite{YANG10}, namely that the Teichm\"{u}ller geodesic axis of any pseudo-Anosov $g\in \Gamma$ is contained within a bounded distance of an orbit of $\Gamma$. See Section 8 for more details.

 We can conclude Corollary \ref{ModThm} from the following general statement. 
 Compare with \cite[Th\'eor\`eme 5.1.1]{Roblin} and  examples in \textsection \ref{SCCinfty}.

\begin{thm}\label{CompactSupport}
Under the assumption of \ref{mainthm}, assume in addition that every contracting element preserves a geodesic axis. Then the   prime conjugacy growth formula with stable length holds for all contracting elements with axis intersecting a fixed finite neighborhood of the  orbit $Go$.  
\end{thm}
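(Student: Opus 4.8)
The plan is to reduce the statement to the pointed-length formula in the Main Theorem by showing that, for the subclass of contracting conjugacy classes whose axis stays within a fixed neighborhood $N_R(Go)$ of the orbit, the stable length $\tau[g]$ and the pointed length $\len_o[g]$ agree up to a uniform additive constant. Once this comparison is established, the set $\{[g] : g \text{ contracting}, \ \mathbf{Ax}(g) \cap N_R(Go) \neq \emptyset, \ 0 < \tau[g] \le n\}$ is sandwiched between two pointed-length sets $\mathcal C(o, n-C)$ and $\mathcal C(o, n+C)$ up to the (exponentially small, by Main Theorem (3)) contribution of non-contracting classes and classes whose axis escapes the neighborhood, and the $\exp(\e G n)/n$ asymptotic is inherited directly. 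The primitive count follows the same way from Main Theorem (3).

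The core step is the additive comparison. Let $g$ be contracting preserving a geodesic axis $\mathbf{Ax}(g)$, and suppose some point $x \in \mathbf{Ax}(g)$ satisfies $d(x, ho) \le R$ for some $h \in G$. Replacing $g$ by the conjugate $h^{-1}gh$ (which changes neither $\tau$ nor the relevant neighborhood condition, only translates the axis), we may assume $d(x, o) \le R$ with $x \in \mathbf{Ax}(g)$. Then $g^n$ translates $x$ along $\mathbf{Ax}(g)$ by exactly $n\tau(g)$, since $g$ acts as a translation on its invariant geodesic; hence $d(o, g^n o) \le d(o, x) + d(x, g^n x) + d(g^n x, g^n o) \le 2R + n\tau(g)$. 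Dividing by $n$ and letting $n \to \infty$ recovers $\tau(g) \le \tau(g)$ (consistent), but more importantly, taking $n = 1$ we already get $d(o, g o) \le n\tau(g) + 2R$ for the conjugate realizing the axis near $o$; combined with the general inequality $\tau[g] \le \len_o[g]$ from the excerpt, this gives $\tau[g] \le \len_o[g] \le \tau[g] + 2R$, so the two lengths differ by at most $2R$ on the relevant set. (Here $R$ should be taken as the assumed neighborhood radius.)

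With the comparison in hand, I would finish as follows. For the upper bound on the theorem's set: every such $[g]$ with $\tau[g] \le n$ has $\len_o[g] \le n + 2R$, so it is counted in $\mathcal C(o, n+2R)$, whose cardinality is $\asymp \exp(\e G(n+2R))/(n+2R) \asymp \exp(\e G n)/n$ by Main Theorem (1), absorbing the constant $e^{2R\e G}$ into $D$. For the lower bound: one must exhibit $\asymp \exp(\e G n)/n$ contracting conjugacy classes with axis meeting $N_R(Go)$ and $\tau[g] \le n$. Here I would use that every contracting element preserves an axis (the added hypothesis), that the pointed-length minimizer $g' \in [g]$ with $d(o, g'o) = \len_o[g]$ has axis passing within a controlled distance of $o$ (a standard contraction/quasiconvexity argument: if $d(o, g'o)$ is nearly minimal in the conjugacy class, the axis of $g'$ cannot stray far from $o$, else conjugating by a well-chosen element would shorten it — this uses the contracting property and the periodicity of the axis), and that by Main Theorem (1) and (3) the contracting classes with $\len_o[g] \le n$ already number $\asymp \exp(\e G n)/n$. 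Matching $\len_o[g'] \le n$ with $\tau[g'] \le \len_o[g'] \le n$ gives the lower bound.

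The main obstacle I anticipate is the lower-bound step, specifically showing that a pointed-length minimizer in a contracting conjugacy class has its axis within a uniform neighborhood of $Go$ (indeed of $o$). The inequality $\tau[g] \le \len_o[g] \le \tau[g] + 2R$ is cheap once the axis is known to be near $o$, but one needs that "most" contracting classes counted by the pointed-length formula are of this type. I would handle this by a projection argument: if $g'$ realizes $\len_o[g]$, write $p$ for the nearest-point projection of $o$ to $\mathbf{Ax}(g')$; conjugating $g'$ by an element moving $p$ toward $o$ along a geodesic cannot increase the translation distance by more than a bounded amount (contraction bound $B$), forcing $d(o, p) \le B'$ for a uniform $B'$ depending only on the contraction constants of the contracting conjugacy class — and here I would invoke that the set of contracting elements with uniform contraction constant still carries full exponential growth, a fact implicit in the SCC-plus-contracting machinery of \cite{YANG10} and used throughout the paper. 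The comparison between the neighborhood radius appearing in the hypothesis and the radius $B'$ produced by this argument is then just a matter of enlarging $R$, which only changes $D$.
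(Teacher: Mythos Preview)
Your upper bound is correct and matches the paper exactly: once the axis meets $N_R(Go)$, conjugating by $h$ with $d(ho,\mathbf{Ax}(g))\le R$ gives $\len_o[g]\le \tau[g]+2R$, and you feed this into Main Theorem~(1).

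The lower bound is where your proposal diverges, and the projection argument you sketch has a real gap. You want to say: if $g'$ is a pointed-length minimizer and $p=\pi_{\mathbf{Ax}(g')}(o)$, then $d(o,p)$ is uniformly bounded, because otherwise conjugating by an element near $p$ would shorten $g'$. But there is no reason for any orbit point to lie near $p$; the axis might pass through a region far from $Go$, and then there is no conjugator to use. Your attempted patch (restrict to elements with uniform contraction constant) is circular in spirit: the only way the paper produces such a uniform class is via the explicit periodic-admissible-path structure, and once you have that structure the projection argument is redundant.

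The paper's route (stated in one sentence at the end of \S\ref{SecProofs}) is more direct. For the lower bound one does not argue abstractly about minimizers at all; one simply observes that the conjugacy classes manufactured in \S\ref{Section4} already have the desired property. The set $\tilde T=f\cdot T$ from Lemma~\ref{ClosingLemma} consists of contracting elements $fg$ admitting periodic $(fg,D,\tau)$-admissible paths through the basepoint $o$. By Proposition~\ref{admissible} (or Lemma~\ref{LengthPeriAdmPath}), any bi-infinite geodesic in a finite neighborhood of $\ax(fg)$---in particular the invariant geodesic axis granted by the added hypothesis---comes within a uniform $\epsilon$ of each vertex $(fg)^i o\in Go$. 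Hence every class in $[\tilde T]$ has its geodesic axis meeting $N_\epsilon(Go)$, and by Lemmas~\ref{PointLengthT} and~\ref{NumElemInConj} these classes already number $\succ \exp(\e G n)/n$ with $\tau[fg]\le n+O(1)$. So the lower bound is inherited from the explicit construction rather than from a minimizer analysis. This is what the paper means by ``the same proof also gives the general theorem~\ref{CompactSupport}.''
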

\begin{rem}[moduli space]
On one hand,   Hamenstädt \cite{Hamen2} and Rafi \cite{Rafi}  proved     that there are closed geodesics outside every compact part of moduli space; on the other hand, Eskin and Mirzakhani \cite{EM} showed that     the number of closed geodesics outside a certain compact part is exponentially small relative to the ones intersecting it.

If an analogue of the latter holds for the SCC covers of moduli space, then \ref{mainthm} allows to count {all} closed geodesics on \textit{any} SCC cover as in Corollary \ref{ModThm}.
\end{rem} 
  
\textbf{Applications to   conjugacy growth series}.
There is some recent interest in understanding the complexity of the following formal \textit{conjugacy growth series} for a   basepoint $o\in G$: 
$$
\mathcal P(z)=\sum_{[g]\in  G}     z^{\len_o[g]}  \in \mathbb Z[[z]],
$$ in particular
whether it is rational, algebraic, or transcendental over $\mathbb Q(z)$. One could similarly look at formal series obtained from counting primitive conjugacy classes and using stable length. The same  result stated below holds for them as well.

This series is naturally stated with respect to the action on the Cayley graph, where $o$ is the identity  and $\len_1[g]$ is the minimal length of elements in $[g]$. It is well-known that,  if counting $N(1, n)$ instead of $\mathcal C(1, n)$, the formal growth series $\sum_{g\in  G}  z^{d(1, g)}$ is rational for any hyperbolic group (cf. \cite{Cannon}). However, in \cite{Rivin2}\cite{Rivin3}, Rivin computed the formal conjugacy growth series for free groups with word metric which  turns out to be irrational.

Furthermore, Rivin \cite{Rivin2} conjectured that the conjugacy growth series of a hyperbolic group is rational if and only if it is virtually cyclic. In  \cite{CHFR}, Ciobanu, Hermiller, Holt and Rees proved that a virtually cyclic group has rational  conjugacy growth series. Later on, Antolin and Ciobanu \cite{AntCio} established the other direction of Rivin's conjecture by showing that a non-elementary hyperbolic group has transcendental conjugacy growth series. The main ingredient is a prime conjugacy growth formula for all conjugacy classes in hyperbolic groups, which extends earlier work of Coornaert and Knieper \cite{CoK1}, \cite{CoK2}. Hence, by the same reasoning, we obtain the following consequence.

\begin{thm}\label{CongugacySeries}
Let $G$ be a non-elementary group acting on a proper geodesic space $(\mathrm Y,d)$ with a contracting element. Then the conjugacy growth series is transcendental. 
\end{thm}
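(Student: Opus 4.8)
The plan is to follow the strategy of Antolin--Ciobanu: rule out rationality and algebraicity of $\mathcal P(z)=\sum_{[g]\in G} z^{\len_o[g]}$ separately, using the prime conjugacy growth formula from the Main Theorem as the key input. First I would record that the ordinary (element) growth series $\sum_{g\in G} z^{d(o,go)}$ has radius of convergence $R=e^{-\e G}$ and, by the purely exponential growth property cited from \cite{YANG10}, $\sharp N(o,n)\asymp e^{\e Gn}$, so this series has a singularity at $z=R$ and no smaller one. By Main Theorem~(1), $\sharp \mathcal C(o,n)\asymp e^{\e Gn}/n$, hence the conjugacy growth series $\mathcal P(z)=\sum_n c_n z^n$ with $c_n := \sharp\{[g]:\len_o[g]=n\}$ satisfies $\sum_{k\le n} c_k \asymp e^{\e G n}/n$; in particular $\mathcal P(z)$ also has radius of convergence $R$.

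Next I would eliminate rationality. If $\mathcal P(z)$ were rational, its coefficient partial sums $\sum_{k\le n}c_k$ would, away from a finite transient, be a quasi-polynomial times a geometric term $\rho^{-n}$, i.e.\ asymptotically $P(n)\,\rho^{-n}$ for a polynomial $P$ and $\rho$ the smallest modulus pole. Matching growth rates forces $\rho = R$, so $\sum_{k\le n}c_k \sim P(n)\, e^{\e G n}$ with $P$ a polynomial of some integer degree $d\ge 0$; but Main Theorem~(1) gives $\sum_{k\le n}c_k \asymp e^{\e G n}/n$, which is incompatible with any polynomial $P$ (it would force $d=-1$). This contradiction rules out rationality. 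The same argument, applied to $\mathcal C'(o,n)$ and to the stable-length counts via Main Theorem~(2)--(3), rules out rationality of the corresponding series.

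For algebraicity I would invoke the analytic-combinatorial dichotomy for algebraic power series with non-negative integer coefficients (as used by Antolin--Ciobanu, ultimately resting on the Flajolet transfer/singularity analysis for algebraic generating functions): if $\mathcal P(z)$ is algebraic over $\mathbb Q(z)$ then its coefficients $c_n$ have an asymptotic expansion of the form $c_n \sim C\, n^{\alpha}\, \beta^{n}$ with $\beta>0$ and $\alpha\in\mathbb Q\setminus\{-1,-2,-3,\dots\}$ (the dominant singularity being algebraic, the local exponent cannot be a negative integer). Passing to partial sums, algebraicity would give $\sum_{k\le n} c_k \sim C'\, n^{\alpha+1}\, R^{-n}$ with $\alpha+1\notin\{0,-1,-2,\dots\}$, whereas Main Theorem~(1) forces the precise order $n^{-1}\,e^{\e Gn}$, i.e.\ $\alpha+1=-1$, a forbidden value. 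Hence $\mathcal P(z)$ is not algebraic, and combined with the previous paragraph it is transcendental; the analogous statements for the primitive and stable-length series follow identically from parts (2) and (3) of the Main Theorem.

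The main obstacle I anticipate is the transfer from the coarse two-sided bound $\sharp\mathcal C(o,n)\asymp e^{\e Gn}/n$ (which controls only the partial sums $\sum_{k\le n}c_k$ up to multiplicative constants, not the individual $c_n$) to a statement strong enough to contradict the rigid asymptotic forms imposed by rationality and algebraicity. The point is that a genuine polynomial-times-geometric asymptotic (rational case) or a $C n^{\alpha}\beta^n$ asymptotic (algebraic case) for the coefficients is automatically inherited, up to constants, by the partial sums with the exponent shifted by one, so the coarse $\asymp$ bound on $\sharp\mathcal C(o,n)$ is exactly at the right level of precision to exclude every admissible exponent. Making this last implication airtight — in particular handling the algebraic case where several conjugate singularities on $|z|=R$ could in principle contribute, and the possibility $\e G$ is not a ``nice'' number — is where I would spend the most care, but it is routine singularity analysis once the growth formula is in hand, and it parallels \cite{AntCio} verbatim.
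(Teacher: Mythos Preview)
Your approach is essentially the paper's: feed the Main Theorem asymptotic $\sharp\mathcal C(o,n)\asymp e^{\e G n}/n$ into Flajolet's structure theorem for coefficients of algebraic power series (the paper cites \cite[Theorem D]{Flaj} and \cite[Theorem 1.1]{AntCio} and works with annulus counts $\sharp\mathcal A(n,\Delta)$ rather than partial sums, but the content is the same). The separate rationality paragraph is redundant, since rational $\Rightarrow$ algebraic and your second step already handles it.

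One computational slip worth fixing: when $c_n\sim C\,n^{\alpha}\beta^{n}$ with $\beta>1$, the partial sums satisfy $\sum_{k\le n}c_k\sim C'\,n^{\alpha}\beta^{n}$ with the \emph{same} exponent $\alpha$, not $\alpha+1$, because a geometrically growing sum is dominated by its last term (the shift by one happens only when $\beta=1$). With the correct formula, matching against $\sharp\mathcal C(o,n)\asymp e^{\e G n}/n$ forces $\alpha=-1$ rather than $\alpha=-2$; both are excluded negative integers, so your conclusion survives, but the argument as written is off by one.
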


In \cite{Sisto}, Sisto proved that if a group admits a proper action with a contracting element then it must be acylindrically hyperbolic in the sense of Osin \cite{Osin6}. In particular, we have the following corollary.
\begin{cor}
Let $G$ be a non-elementary group with a finite generating set $S$. If $G$ has a contracting element with respect to the action on the corresponding Cayley graph, then the conjugacy growth series is transcendental.
\end{cor}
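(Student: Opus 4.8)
The plan is to deduce this corollary directly from Theorem~\ref{CongugacySeries}; the only work is to check that the left-translation action of $G$ on its Cayley graph $\mathscr{G}(G,S)$ meets the hypotheses of that theorem. Since $S$ is finite, $\mathscr{G}(G,S)$ with the word metric is a proper geodesic metric space: closed metric balls contain only finitely many vertices and edges, and geodesics exist by construction. The action of $G$ by left translations is proper (vertex stabilizers are trivial) and cocompact (the quotient graph is finite). The corollary's hypothesis supplies a contracting element and $G$ is assumed non-elementary, so it remains only to verify that a cocompact action is statistically convex-cocompact in the sense of Definition~\ref{StatConvex} and that $\e G < \infty$.

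For the SCC condition I would argue as follows. Because $Go = G$ is the full vertex set, it is $1$-dense in $\mathscr{G}(G,S)$, so $N_{M_1}(Go) = \mathscr{G}(G,S)$ whenever $M_1 \ge 1$; take $M_1 = M_2 = 1$. Then a geodesic $\gamma$ witnessing membership of $g$ in $\mathcal{O}_{M_1, M_2}$ must have interior disjoint from the whole space, i.e. empty, which forces its (coinciding) endpoints to lie in $B(o, M_2) \cap B(go, M_2)$ and hence $d(o, go) \le 2M_2$. Thus $\mathcal{O}_{M_1, M_2}$ is contained in the ball of radius $2M_2$ about the identity, so it is finite and $\e{\mathcal{O}_{M_1, M_2}} = 0$. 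On the other hand $\e G > 0$: a non-elementary group carrying a contracting element contains a quasi-isometrically embedded free subgroup of rank two --- by ping-pong on high powers of two independent contracting elements, cf.~\cite{YANG10}; alternatively, by \cite{Sisto} the action of $G$ on $\mathscr{G}(G,S)$ being proper with a contracting element makes $G$ acylindrically hyperbolic, hence, being non-virtually-cyclic, it contains $F_2$ by \cite{Osin6} --- and such a subgroup already has exponential growth, so $\e G \ge \log 2$. Finally $\e G \le \log(2|S|+1) < \infty$, since the ball of radius $n$ about the identity has at most $(2|S|+1)^n$ vertices. Hence $\e{\mathcal{O}_{M_1, M_2}} = 0 < \e G < \infty$ and the action is SCC.

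All hypotheses now being in force, Theorem~\ref{CongugacySeries} applies verbatim and gives that the conjugacy growth series $\mathcal{P}(z)$ of $G$ with respect to $S$ is transcendental over $\mathbb{Q}(z)$. The same conclusion for the series counting primitive conjugacy classes, and for the series in which each class is weighted by its stable length, follows identically by invoking the corresponding refinements of the Main Theorem~\ref{mainthm} as explained before Theorem~\ref{CongugacySeries}. I do not anticipate a genuine obstacle: the corollary is a specialization of an already-established theorem, and the only step meriting explicit justification is the implication ``cocompact $\Rightarrow$ SCC'', within which the positivity $\e G > 0$ is the one place the non-elementarity hypothesis enters.
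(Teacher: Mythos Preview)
Your proof is correct and follows the same route as the paper: the corollary is an immediate specialization of Theorem~\ref{CongugacySeries} to the cocompact action on the Cayley graph, and the only point requiring a word is that cocompactness makes the set $\mathcal O_{M_1,M_2}$ finite so that the SCC inequality $\e{\mathcal O_{M_1,M_2}}<\e G$ holds. The paper does not spell out these details, so your verification that $\e G>0$ via a quasi-isometrically embedded free subgroup (and $\e G<\infty$ from the finite generating set) is exactly the missing justification.
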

This   confirms Rivin's conjecture for a large subclass of  acylindrically hyperbolic groups, including right-angled Artin/Coxeter groups, relatively hyperbolic groups and graphical small cancellation groups, etc.

In the class of relatively hyperbolic groups,  the conclusion actually holds for every generating set. This is a direct generalization of the corresponding statement in \cite{AntCio} for hyperbolic groups.

\begin{cor}
The conjugacy growth series of a non-elementary relatively hyperbolic group with respect to any finite generating set  is  transcendental.
\end{cor}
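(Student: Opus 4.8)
The plan is to reduce the statement to Theorem~\ref{CongugacySeries}. Fix a finite generating set $S$ of $G$ and consider the action of $G$ on the Cayley graph $\mathscr G(G,S)$, equipped with the word metric and basepoint $o=1$. The conjugacy growth series of $G$ relative to $S$ is, by definition, exactly the series $\mathcal P(z)=\sum_{[g]\in G}z^{\len_o[g]}$ attached to this action; so it suffices to check that this action satisfies the hypotheses of Theorem~\ref{CongugacySeries}, and then let $S$ vary.

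First I would verify the SCC condition, which is automatic here because the action is cobounded. The orbit $Go$ is the vertex set of $\mathscr G(G,S)$, so $N_1(Go)=\mathscr G(G,S)$. Hence in Definition~\ref{StatConvex}, taking $M_1=1$, the set $\mathcal O_{1,M_2}$ consists of those $g$ for which some geodesic between $B(o,M_2)$ and $B(go,M_2)$ has interior disjoint from all of $\mathscr G(G,S)$, i.e.\ a degenerate geodesic, which forces $d(o,go)\le 2M_2$. Thus $\mathcal O_{1,M_2}\subseteq N(o,2M_2)$ is finite and $\e{\mathcal O_{1,M_2}}=0$. Since $G$ is non-elementary it has exponential growth with respect to the finite set $S$, so $0<\e G<\infty$, and the SCC inequality $\e{\mathcal O_{1,M_2}}<\e G<\infty$ holds. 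Finally $\mathscr G(G,S)$ is proper, as required by Theorem~\ref{CongugacySeries}.

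Next I would exhibit a contracting element. A non-elementary relatively hyperbolic group $(G,\mathcal P)$ contains a hyperbolic element $g$, that is, an infinite-order element not conjugate into any peripheral subgroup: for instance, the induced $G$-action on the Gromov hyperbolic coned-off Cayley graph is non-elementary and hence admits a loxodromic isometry, realized by such a $g$. By the facts recalled before Corollary~\ref{RHThm} — the Morse/strongly contracting property of hyperbolic elements in relatively hyperbolic groups, which does not depend on the chosen finite generating set — the axis $\langle g\rangle\cdot o$ is contracting in $\mathscr G(G,S)$ and $\tau(g)>0$. As $G$ is in particular non-elementary in the sense of the Main Theorem, Theorem~\ref{CongugacySeries} now applies and gives that $\sum_{[g]\in G}z^{\len_o[g]}$, i.e.\ the conjugacy growth series of $G$ with respect to $S$, is transcendental over $\mathbb Q(z)$. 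Since $S$ was an arbitrary finite generating set, we are done.

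The only step that uses anything beyond a formal check of hypotheses is the assertion that a hyperbolic element of a relatively hyperbolic group has a strongly contracting orbit in \emph{every} word metric on $G$; this is where one must invoke the geometry of relatively hyperbolic groups, together with the quasi-isometry invariance that makes it independent of the generating set. I expect this to be the main — and essentially only — point requiring care, everything else being routine verification that the action on each $\mathscr G(G,S)$ falls within the scope of Theorem~\ref{CongugacySeries}.
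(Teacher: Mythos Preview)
Your proposal is correct and matches the paper's implicit approach: the corollary is simply Theorem~\ref{CongugacySeries} applied to the (cocompact, hence trivially SCC) action on each Cayley graph $\mathscr G(G,S)$, the only nontrivial input being that hyperbolic elements of a relatively hyperbolic group are contracting for \emph{every} word metric. One small caution on wording: the contracting property in the sense of this paper (strongly contracting) is \emph{not} in general preserved under quasi-isometry, so it is not literally ``quasi-isometry invariance'' that gives independence of $S$; rather it is a structural fact about relatively hyperbolic groups (see \cite{GePo4}, \cite{GePo2}, \cite{YANG6}) that every hyperbolic element is strongly contracting in each Cayley graph, and this is exactly the point you flag as the one step requiring care.
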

 
To conclude the introduction of our results, we mention a result of independent interest which is  a by-product of the proof of our main theorem. 

 {Let $E(g)$ be the maximal elementary subgroup of $G$ containing $g$. Define the set $\tax(g)=E(g)\cdot o$  to be the coarse axis of a contracting element $g$. We define a $\langle g\rangle$ invariant subset $\tax_R(g)$ of $\mathrm Y$ to be an \textit{$R$-stable axis} of the element $g$, if  for  any point $x\in \tax_R(g)$, the ball $B(x, R)$ intersects any bi-infinite  geodesic $\alpha$ which is contained a finite neighborhood of $\tax(g)$ and $d(x, gx)>3R$}. A simplified version of Theorem \ref{LinearDrift} is stated below.
\begin{thm}\label{simplifiedLinearDrift}
Assume that $G$ admits a SCC action on a proper geodesic metric space $(\mathrm Y, d)$. Fix a contracting element $f$. Then there exist $R>0$ depending on $f$ such that for  any  $1>\theta_1, \theta_2>0$ and any integer $m>1$,  the set of elements $g$ with $n=d(o, go)$ satisfying
\begin{enumerate}
\item
$n\ge \tau [g]  \ge (1-\theta_1) n$,
\item
$ d(o, \tax_R(g)) \le n \theta_2$,
\item
any  bi-infinite geodesic which is contained in a finite neighborhood of $\ax(g)$ contains an $(\epsilon ,f^m)$-barrier (see Def. \ref{barriers}),
\end{enumerate}
is exponentially generic.  
\end{thm}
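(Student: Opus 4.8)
The plan is to exhibit the set in the statement as the complement of a finite union of exponentially negligible sets, all governed by one genericity input. Writing $\beta_g$ for a geodesic from $o$ to $go$ and $n:=d(o,go)$, this input is: for a prescribed sub-segment of $\beta_g$ of length $\ge\theta n$, the number of $g\in N(o,n)$ for which that sub-segment carries \emph{no} $(\epsilon,f^m)$-barrier is at most $C\exp((\e G-\delta)n)$ for constants $C,\delta>0$ depending only on $f,\epsilon,\theta$. This rests on the fact --- a consequence of the Extension Lemma for the contracting element $f$ together with the SCC hypothesis $\e{\mathcal O_{M_1,M_2}}<\e G$, see \cite{YANG10} --- that the set of $g$ with $\beta_g$ entirely $(\epsilon,f^m)$-barrier-free has critical exponent strictly below $\e G$; one localizes it by factoring an offending $g$ along the barrier-free sub-segment and estimating the resulting sum $\sum_{g_1\ \text{barrier-free}}\exp(-\e G\,d(o,g_1 o))$ as the tail of a convergent Poincar\'e series. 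Dividing by $\sharp N(o,n)\asymp\exp(\e G n)$ --- valid here since the action is SCC with a contracting element and $\e G<\infty$ --- turns every such ``barrier-free on a linear piece'' event into an exponentially negligible one; in particular, for a generic $g$ with $n$ large, $\beta_g$ has an $(\epsilon,f^m)$-barrier in its bulk (the sub-segment at distance $\ge D_0=D_0(f,\epsilon)$ from both endpoints) and one inside its initial sub-segment of length $\theta_2 n/100$.

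From this I would read off the three conditions. Condition (1) is the standard no-backtracking estimate: a bulk barrier $h\,\ax(f^m)$ forces, through the contraction of the translate $h\,\ax(f^m)$, the bi-infinite path $\bigcup_k g^k\beta_g$ to be a quasigeodesic with $d(o,g^k o)\ge kn-(k-1)C$, whence $\tau(g)\ge n-C$ with $C=C(f,\epsilon)$; as $n\ge C/\theta_1$ for large $n$, this gives $\tau[g]\ge(1-\theta_1)n$, while $\tau[g]\le\len_o[g]\le n$ is automatic. For conditions (2) and (3), the crucial point is that a barrier $h_1\,\ax(f^m)$ on $\beta_g$ is an honest translate of the \emph{fixed} contracting set $\ax(f^m)$: hence a bi-infinite geodesic $\alpha$ lying in a finite neighborhood of $\tax(g)$ --- thus close to $\bigcup_k g^k\beta_g$, which enters and exits $h_1\,\ax(f^m)$ near its two ends --- must fellow-travel $h_1\,\ax(f^m)$ over a window of length comparable to $m\tau(f)$ with a constant $R=R(f)$ controlled only by the contraction constant of $f$ and the neighborhood parameter implicit in the notion of an $R$-stable axis. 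This yields condition (3) (once $\epsilon$, or $m$, is chosen so that such a window is an $(\epsilon,f^m)$-barrier), and it yields condition (2): taking $h_1\,\ax(f^m)$ to be the barrier near $o$, with center $p_1$ at distance $\le\theta_2 n/100+O(1)$ from $o$, the orbit $\langle g\rangle\cdot p_1$ is an $R$-stable axis (each of its points lies within $R$ of every such $\alpha$, and $d(p_1,gp_1)\ge n-O(\theta_2 n)>3R$ for large $n$), so $d(o,\tax_R(g))\le\theta_2 n/100+O(1)\le\theta_2 n$. Fixing this $R$ at the outset (it depends on $f$ only), each of the three events fails only inside a set of critical exponent $<\e G$; a finite union of such sets is of the same type, so the complement --- the asserted set --- is exponentially generic.

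The main obstacle is the genericity input itself, in particular making it localized and with uniform constants. That the global barrier-free set has critical exponent below $\e G$ already exploits the Extension Lemma and the SCC inequality in an essential way; passing to a prescribed linear sub-segment needs the factorization and Poincar\'e-tail estimate above, and here one must check that the additive errors in the quasigeodesic and nearest-point-projection estimates do not accumulate across the factorization --- which is guaranteed by the uniformity of the contraction constant over all powers $f^m$ of the single fixed element $f$. That same uniformity is what allows $R$, and the neighborhood parameter entering the definition of an $R$-stable axis, to depend on $f$ alone, independently of $\theta_1,\theta_2,m$.
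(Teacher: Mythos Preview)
Your overall strategy coincides with the paper's: place barriers in prescribed sub-intervals of $[o,go]$ using the ``fractional barrier-free is growth tight'' input (the paper's Lemma \ref{fractalbarrier}), then build a periodic admissible path out of the barrier to deduce contraction of $g$, the estimate on $\tau[g]$, and the stable axis.

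There is, however, a genuine gap at the sentence ``a bulk barrier $h\,\ax(f^m)$ forces \ldots\ the bi-infinite path $\bigcup_k g^k\beta_g$ to be a quasigeodesic''. This is not automatic. Writing $x,y$ for the entry and exit points of $\alpha=[o,go]$ in $N_\epsilon(h\,\ax(f^m))$, what you must check is that the connecting geodesic $[y,gx]$ has bounded projection to $h\,\ax(f^m)$ and to $g\cdot h\,\ax(f^m)$. The first is controlled by the sub-segment $[y,go]_\alpha$, but the second requires controlling $g[o,x]_\alpha$ against $h\,\ax(f^m)$, and this can fail: if $g^{-1}\alpha$ (or $g\alpha$) re-enters $N_C(h\,\ax(f^m))$, the projection blows up and the concatenated path need not be a quasigeodesic at all. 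Merely asking the barrier to lie ``in the bulk'' does not rule this out. The paper confronts this directly: it places \emph{two} $(\epsilon,f^k)$-barriers $\mathbf b_1,\mathbf b_2$ in the narrow windows $[\theta_n/4,\theta_n/2]$ and $[1-\theta_n/2,1-\theta_n/4]$ of $\alpha$, and then shows that when $\mathbf b_2\cap N_C(g\alpha)\ne\emptyset$ (or symmetrically for $\mathbf b_1$ and $g^{-1}\alpha$) one can write $g=h\hat g h^{-1}$ with $d(o,go)\sim 2d(o,ho)+d(o,\hat go)$ and $d(o,ho)\ge(\theta/4)\,d(o,go)$, so this bad set is itself growth tight and can be discarded. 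Only after excising it does the bounded-projection condition hold and the admissible-path machinery (Proposition \ref{admissible}, Lemma \ref{LengthPeriAdmPath}) yield the stable axis and the estimate $\tau[g]\ge(1-\theta)n$. Your sketch omits this excision step entirely.

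A secondary point on condition (3): you place an $(\epsilon,f^m)$-barrier in $\beta_g$ and then want an $(\epsilon,f^m)$-barrier on every bi-infinite geodesic near $\ax(g)$, but the fellow-travelling constant between such a geodesic and $\beta_g$ is governed by the admissible-path constants, not by the original $\epsilon$. The paper handles this by taking the barriers in $\alpha$ to be $(\epsilon,f^k)$-barriers with $k=k(m)$ a large multiple of $m$; the resulting long fellow-travelling segment then contains an $f^m$-window within a new constant $\tilde C$ of the bi-infinite geodesic, and $\epsilon$ in the theorem is chosen to accommodate this $\tilde C$. Your parenthetical ``once $\epsilon$, or $m$, is chosen'' is not available: $m$ is given to you, so you must absorb the mismatch by enlarging the barrier power, as the paper does.
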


Examining the axis of a pseudo-Anosov element in Teichm\"{u}ller space, we derive an application to mapping class groups which gives a positive answer to the (first part of) the question posed by J. Maher in \cite[Question 6.4]{DHM}. By abuse of language, we denote below by $\Teichax(g)$  the Teichm\"{u}ller axis of a pseudo-Anosov element of $g$ both as a subset of Teichm\"{u}ller space and its unit tangent bundle.  

Recall that a pseudo-Anosov element is contracting with respect to  Teichm\"{u}ller metric, cf. \cite{Minsky}. Moreover, the action of $MCG(S)$ on $Teich(S)$ is  statistically convex-cocompact (see \cite[Theorem 1.7]{EMR},  \cite[Section 10]{ACTao}).

{Let $\mathbf{PS}$ be the principal stratum of quadratic differentials}.
\begin{thm}\label{Teichprincipal}
Let the mapping class group $G$ act  on the Teichm\"{u}ller space $(\mathrm Y, d)$ endowed with Teichm\"{u}ller metric. Then  for  any  $0<\theta_1, \theta_2<1$, we have 
 $$
 \{g\in G:    n\ge \tau [g] \ge \theta_1 n \;\&\; d(o, \Teichax(g)) \le \theta_2 n\;\&\; \Teichax(g) \in \mathbf{PS}~\text{where}~n=d(o,go) \}
$$
is exponentially generic.
\end{thm}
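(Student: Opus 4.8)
The plan is to deduce Theorem \ref{Teichprincipal} from Theorem \ref{simplifiedLinearDrift} (equivalently from the full Theorem \ref{LinearDrift}) applied to the action of $G=MCG(S)$ on $Teich(S)$, which is SCC with contracting pseudo-Anosov elements as recalled in the text. Conditions (1) and (2) of Theorem \ref{simplifiedLinearDrift} literally give the first two constraints in the displayed set (after translating the $R$-stable axis $\ax_R(g)$ into a bounded neighborhood of the Teichm\"uller axis $\Teichax(g)$, which is legitimate because a power of every pseudo-Anosov preserves its Teichm\"uller geodesic, so $\ax_R(g)$ and $\Teichax(g)$ lie within uniformly bounded Hausdorff distance of each other). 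The real content is to produce, from the barrier condition (3), the conclusion that $\Teichax(g)$ lies in the principal stratum $\mathbf{PS}$. So the argument has two halves: first, choose the auxiliary contracting element $f$ in Theorem \ref{simplifiedLinearDrift} to be a pseudo-Anosov \emph{whose own Teichm\"uller axis is contained in $\mathbf{PS}$}; second, show that the barrier property propagates the principal-stratum condition to the axis of $g$.

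For the second half I would argue geometrically along the barrier. Pick $\epsilon$ and $m$ once and for all so that an $(\epsilon, f^m)$-barrier forces a long segment of the geodesic in question to $\epsilon$-fellow-travel a translate of $\ax(f)$. Along that translate of $\ax(f)$, the quadratic differential lies in $\mathbf{PS}$ by our choice of $f$; being in the principal stratum is an open condition on the (co)tangent bundle, so a geodesic segment that stays $\epsilon$-close to such a segment, for $\epsilon$ small relative to the ``margin'' by which $\ax(f)$ sits inside $\mathbf{PS}$, must itself have its quadratic differential in $\mathbf{PS}$ along a nontrivial sub-segment. Now apply this to $\alpha=\Teichax(g)$: by condition (3), any bi-infinite geodesic in a bounded neighborhood of $\ax(f)$ contains a barrier; but the relevant statement in Theorem \ref{LinearDrift} is really that the \emph{axis of $g$} (which by construction of $\tax_R$ stays in a bounded neighborhood of $\tax(g)$) carries such a barrier. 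Hence $\Teichax(g)$ passes through $\mathbf{PS}$ at at least one point. Finally, invariance finishes it: $\Teichax(g)$ is $\langle g\rangle$-invariant, $g$ acts as a translation along it, and the $MCG(S)$-action preserves the stratification of quadratic differentials; so if one point of the axis is in $\mathbf{PS}$ then the whole axis, being the closure of the $\langle g\rangle$-orbit of any of its points under the flow, has its defining quadratic differential in $\mathbf{PS}$ everywhere. (Concretely: $\Teichax(g)\in\mathbf{PS}$ as a subset of the unit tangent bundle means the vertical/horizontal foliations of the flat structure are uniquely ergodic and have only simple zeros; this data is constant along a Teichm\"uller geodesic, so one good point suffices.)

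The first half — arranging a contracting element $f$ with $\ax(f)\subset\mathbf{PS}$ — should be checked but is standard: generic pseudo-Anosovs have foliations with only simple singularities (indeed their invariant foliations are ``generic'' in Thurston's sense), so such $f$ exist; one can also produce one explicitly. Since Theorem \ref{simplifiedLinearDrift} allows us to fix any contracting $f$ and then take $\epsilon$ small and $m$ large, we simply feed in this particular $f$ and the corresponding small $\epsilon$, and read off genericity of the intersection of the three events.

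The main obstacle I anticipate is the quantitative matching in the propagation step: making precise that ``$\epsilon$-close to a segment of $\ax(f)$'' implies ``in $\mathbf{PS}$'', uniformly. One must control how the quadratic differential (a point in $QD^1(S)$, the unit (co)tangent bundle) varies as one moves the base surface a Teichm\"uller distance $\le \epsilon$ \emph{and} changes direction by a bounded amount; openness of $\mathbf{PS}$ in $QD^1(S)$ gives a neighborhood, but one needs the barrier to be long enough and $\epsilon$ small enough (both available from Theorem \ref{simplifiedLinearDrift}) that the fellow-traveling segment's lift to $QD^1(S)$ lands inside that neighborhood. This is where one invokes continuity of the geodesic flow and of the map $Teich(S)\times(\text{directions})\to QD^1(S)$, together with the compactness of $\ax(f)/\langle f\rangle$ to get a uniform margin. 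Everything else is bookkeeping combining the three genericity statements, which is exactly what Theorem \ref{simplifiedLinearDrift} packages for us.
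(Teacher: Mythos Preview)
Your overall strategy coincides with the paper's: apply Theorem \ref{simplifiedLinearDrift}/\ref{LinearDrift} to the $MCG(S)$-action, read off (1) and (2) directly, fix a pseudo-Anosov $f$ with $\Teichax(f)\subset\mathbf{PS}$, and use the $(\epsilon,f^m)$-barrier on $\Teichax(g)$ to force the principal-stratum conclusion. You also correctly note that strata are Teichm\"uller-flow invariant, so one good point on the axis suffices, and you correctly interpret condition (3) as applying to geodesics near $\ax(g)$ rather than $\ax(f)$.

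The gap is in your propagation step. You argue that $\epsilon$-fellow-traveling of $\Teichax(g)$ with a translate of $\Teichax(f)$ in $Teich(S)$, combined with openness of $\mathbf{PS}\subset QD^1(S)$, places the tangent quadratic differential of $\Teichax(g)$ inside $\mathbf{PS}$. But closeness of base points in $Teich(S)$ says nothing about closeness of tangent directions in $QD^1(S)$: two Teichm\"uller geodesics can pass through $\epsilon$-close surfaces while carrying quadratic differentials in entirely different strata. Your appeal to ``continuity of the geodesic flow'' does not help, since that is continuity of a map $QD^1(S)\to QD^1(S)$, not a way to lift base-proximity to cotangent-proximity; and compactness of $\Teichax(f)/\langle f\rangle$ gives a uniform margin in $QD^1(S)$ only once you already know the other geodesic's differential is nearby. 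What is actually needed is a genuine Teichm\"uller-theoretic statement: long fellow-traveling with a principal-stratum axis forces the other bi-infinite geodesic (with uniquely ergodic foliations) into the principal stratum. This is precisely Proposition \ref{closeprincipal} (Gadre--Maher), which the paper cites as a black box and feeds the barrier into. Your argument would be complete if you replaced the openness hand-wave by an invocation of that proposition; as written, the ``main obstacle'' you flag is not overcome.

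A small side remark: $\mathbf{PS}$ refers to the simple-zero condition on the quadratic differential; unique ergodicity of the foliations is a separate fact about pseudo-Anosov axes, needed as a \emph{hypothesis} in Proposition \ref{closeprincipal}, not part of the definition of the principal stratum.
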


\subsection{Outline of the proof of Main Theorem}\label{OutlinePF}

Recall that  $N(o, n)=\{g: d(o, go)\le n\}$ is the set of elements in a ball of radius $n$. When the action is SCC, we have 
$$
\sharp N(o, n) \asymp \exp(\e Gn).
$$
Since   $\len_o[g]\ge \tau[g]$ for any   $o\in \mathrm Y$, the following relations are basic in our discussion:
$$
\mathcal C (o, n) \subset \mathcal C(n),\;\; \mathcal C (o, n) \subset N(o, n).
$$

The key idea of the proof is to choose an exponentially generic  set of contracting elements with locally uniform hyperbolic properties. Such   properties are encapsulated in the following notion (See Def. \ref{barriers} for a precise definition).   

\begin{defn}
With a basepoint $o\in \mathrm Y$ fixed, an element $h\in G$ is called \textit{$(\epsilon, M, g)$-barrier-free} if there exists an \textit{$(\epsilon, f)$-barrier-free} geodesic $\gamma$ with $\gamma_-\in B(o, M)$ and $\gamma_+\in B(ho, M)$:  there exists no $t\in G$ such that $d(t\cdot o, \gamma), d(t\cdot fo, \gamma)\le \epsilon$. 
\end{defn}

In \cite{YANG10}, the second-named author proved that for any $f\in G$, the set of $(\epsilon, M, f)$-barrier-free elements is exponentially negligible for a constant $M>0$ appearing in Definition \ref{StatConvex}. As fore-mentioned, a contracting element represents a sense of hyperbolicity in direction.  Therefore, once a contracting element $f$ is provided and fixed through out, the geodesic with $(\epsilon, M, f)$-barriers \textbf{uniformly} behaves like a geodesic in a Gromov-hyperbolic space when coming to the barriers. This often allows us to run arguments via hyperbolic geometry (e.g. see Proposition \ref{StablePointedLength}). 

Furthermore, the set of minimal representatives in conjugacy classes of non-contracting elements are barrier-free, so exponentially negligible as well. We show that  $\mathcal C(o, n)$ has the   growth rate $\e G$;  it is sufficient to count conjugacy classes of contracting elements.

The next  step is to compute the algebraic length and stable length of a contracting element. The algebraic length is a bit easier to estimate from the definition. However, giving a uniform way to estimate the stable length of every contracting element seems to be hard, if not impossible. The solution is here that we can estimate the  stable length for an exponentially generic set of contracting elements.


\begin{lem}[Corollary \ref{GenericSetTwoLengths}, Stable length $\simeq$ algebraic length]
There exist an exponentially generic set $\mathcal G$ of contracting elements  and a   constant $D=D(f_1, f_2, o)>0$ for two independent contracting elements $f_1,f_2$ such that for each $g\in \mathcal G$, the following holds
$$
0\le \len_o[g]-\tau[g]\le D.
$$
\end{lem}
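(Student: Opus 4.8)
The plan is to exploit the barrier structure that defines the exponentially generic set $\mathcal G$. By construction, a generic element $g$ admits a geodesic $\gamma$ from near $o$ to near $go$ that contains an $(\epsilon, f^m)$-barrier; in fact one should arrange several well-spaced barriers, using two independent contracting elements $f_1, f_2$ so that the barriers cannot all be ``absorbed'' into a single periodic pattern. The point of having barriers of two distinct types is to guarantee that $g$ is not conjugate to a shorter element by a large amount and, more importantly, that one can build a genuine quasi-geodesic axis for $g$ by concatenating translates of $\gamma$.

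First I would record the easy inequality $\tau[g] \le \len_o[g]$, which holds for every element by subadditivity as already noted in the excerpt; so the content is the upper bound $\len_o[g] - \tau[g] \le D$. To get this, let $g' \in [g]$ be a pointed-length representative, so $d(o, g'o) = \len_o[g]$, and note $g'$ inherits the barrier properties (being conjugate, its axis is a translate). The strategy is: (i) show that the bi-infinite path $\cdots \cup (g')^{-1}\gamma' \cup \gamma' \cup g'\gamma' \cup \cdots$ obtained by stringing together $\langle g'\rangle$-translates of the geodesic $\gamma'$ joining (a point near) $o$ to (a point near) $g'o$ is a uniform quasi-geodesic. This is where the contracting/barrier hypothesis does the work: the contracting property of the axis of $f_i$ together with the barrier being deep inside $\gamma'$ forces the concatenation to make definite linear progress and to fellow-travel a genuine geodesic, via the standard ``contracting subsets give quasi-geodesics'' / local-to-global argument (as in \cite{YANG10}). (ii) Once the concatenation is an $(L,C)$-quasi-geodesic with $L, C$ depending only on $\epsilon$, $f_1$, $f_2$ and $o$, the orbit map $n \mapsto (g')^n o$ is a quasi-isometric embedding with the \emph{same} constants, so $\tau(g') \ge \frac{1}{L} d(o, g'o) - C'$ is the wrong direction; instead I compare $d(o,(g')^N o)$ with $N \cdot d(o, g'o)$ directly: the quasi-geodesic estimate gives $d(o, (g')^N o) \ge N\,d(o, g'o) - N D$ for a constant $D$ absorbing the overlap errors at each concatenation node, whence $\tau(g') = \lim_N d(o,(g')^No)/N \ge d(o, g'o) - D = \len_o[g] - D$.

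The key estimate is therefore purely local: at each node where $g'^{k}\gamma'$ meets $g'^{k+1}\gamma'$, the loss in length compared to an ideal geodesic is bounded by a constant independent of $g'$. This is exactly the ``bounded backtracking'' that a barrier guarantees — a geodesic from $(g')^k o$ to $(g')^{k+2} o$ must pass $\epsilon$-close to both endpoints of the barrier segment inside the middle copy $g'^{k+1}\gamma'$, by the contracting property, and so its length differs from $2\,d(o,g'o)$ by at most a uniform additive error. Iterating and dividing by $N$ kills the error in the limit, giving the clean bound. I would also remark that the constant $D$ depends on the basepoint $o$ only through the constant $M$ in the SCC definition and the translation lengths of $f_1, f_2$, consistent with the statement $D = D(f_1, f_2, o)$.

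The main obstacle I anticipate is establishing the quasi-geodesicity of the infinite concatenation uniformly over all $g' \in \mathcal G$ — i.e.\ making precise that a single barrier (or a bounded number of them) in $\gamma'$ suffices to control the concatenation, rather than needing the barriers to become dense as $\len_o[g] \to \infty$. The resolution should be that the contracting constant of $\ax(f_i)$ is fixed, so one barrier per fundamental domain already prevents the path from ``turning back'', and the admissible-path / contracting-system machinery from \cite{YANG10} applies verbatim with uniform constants. A secondary subtlety is that the pointed-length representative $g'$ might a priori have its barrier positioned awkwardly (near the endpoints of $\gamma'$ rather than deep inside); here one uses that genericity was designed to place barriers at a definite fraction of the way along, and conjugation-invariance of this configuration, so that $g'$ still has a barrier bounded away from both endpoints. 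Once these uniformity points are in hand, the rest is the routine additive bookkeeping sketched above.
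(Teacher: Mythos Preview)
Your overall strategy --- pass to a minimal representative $g'$, build a periodic path from $\langle g'\rangle$-translates, show it is a uniform quasi-geodesic, and read off $\tau(g') \ge d(o,g'o) - D$ --- matches the paper's, and you correctly anticipate that two independent contracting elements are essential.  The genuine gap is your resolution of the ``secondary subtlety''.  Neither proposed fix works: barrier positions along $[o,go]$ are \emph{not} conjugation-invariant (the segment $[o,g'o]$ for $g'=hgh^{-1}$ bears no direct relation to $[o,go]$; only the coarse axis is a translate), and the genericity actually available here (via Theorem~\ref{Conj2BarrierFree}) guarantees only that every conjugate, in particular the minimal one, contains \emph{some} $(\epsilon,f_i)$-barrier --- with no control whatsoever on its location.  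So for the minimal $g'$ you simply cannot assume the barriers sit at a definite fraction from the endpoints, and your concatenation of copies of $[o,g'o]$ has no uniform reason to be quasi-geodesic at the junctions.

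The paper (Proposition~\ref{StablePointedLength}) handles exactly this point, and differently from what you sketch.  Instead of concatenating $[o,g'o]$, it takes the entry and exit points $x,y$ of $[o,g'o]$ in $N_\epsilon(\mathbf b)$ for one of the two barriers $\mathbf b$ and forms the periodic admissible path $\bigcup_{i}(g')^i\big([x,y]_\alpha\cdot[y,g'x]\big)$; the verification reduces to a bounded-projection condition for $[y,g'x]$ onto $\mathbf b$ and $g'\mathbf b$.  The crucial step is a dichotomy driven by \emph{minimality}: since $g'$ is shortest in its class, $d(y,g'y)\ge d(o,g'o)-2\epsilon$, and a short triangle computation shows that if the bounded-projection condition fails for $\mathbf b$, then the exit point $y$ is forced within $4\epsilon$ of $g'o$ (and symmetrically the entry near $o$).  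Now the second, \emph{independent} barrier $\tilde{\mathbf b}$ enters: bounded intersection of $\mathbf b$ and $\tilde{\mathbf b}$ pushes the entry/exit points of $\tilde{\mathbf b}$ away from the endpoints, and rerunning the same computation for $\tilde{\mathbf b}$ gives a contradiction.  Hence at least one of the two barriers always satisfies the bounded-projection condition, with uniform constants.  This dichotomy --- not any placement hypothesis on the barriers --- is precisely why two independent $f_i$ are needed, and it is the missing idea in your outline.
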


Therefore we do not need to distinguish between stable length and algebraic length, and so $\sharp \mathcal C(o, n)$ and $\sharp \large (\mathcal C(n)\cap \mathcal C(o, n)\large)$ are coarsely equal.

\paragraph{\textbf{1. Upper bound on strongly primitive conjugacy classes.}} To get the upper bound of $\mathcal C(o, n),$ we follow a piece of argument in \cite{CoK2} which works for a cocompact action of a (hyperbolic) group. Namely, take a conjugacy class $[g]\in \mathcal C(o, n)$ and if $g$ is strongly primitive and write $g=s_1\cdot s_2\cdots s_n$, then by cyclic permutation we obtain $n$ words in the same conjugacy class. If all cyclic permutations represent  distinct elements, then the upper bound of  $\mathcal C(o, n)$ is obtained as follows:
$$
\sharp \mathcal C'(o, n) \cdot n \le \sharp N(o, n).
$$
However, this argument breaks {down} when the action is not cocompact. The reason is that since $[o, go]$ may have large proportion outside the orbit $Go$,  there is no way to write $g$ as a product of a  {number of elements linear in $d(o,go)$}. We overcome this   by showing that geodesic segments associated to generic  contracting elements spend a definite proportion of the time in $N_M(Go)$.

\begin{lem}[Corollary \ref{LargePercentGeneric}, Thick contracting elements]
There exists an exponentially generic set $\mathcal G$ of contracting elements such that for each $g\in \mathcal G$, we have
$$
\len([o, go]\cap N_M(Go)) \ge 0.9 \cdot \len([o, go]).
$$
\end{lem}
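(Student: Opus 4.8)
The plan is to fix once and for all the constants $M_1\le M_2$ provided by statistical convex-cocompactness, so that $\e{\mathcal O_{M_1,M_2}}<\e G$, and to set $M:=M_2+L$ for a large constant $L$ to be chosen at the very end. I would then prove that the \emph{bad set}
$$\mathcal N=\{g\in G:\ \len([o,go]\cap N_M(Go))<0.9\,d(o,go)\}$$
is exponentially negligible, and define $\mathcal G:=\{g:\ g\ \text{is contracting}\}\setminus\mathcal N$. This $\mathcal G$ is exponentially generic because the set of contracting elements is (by \cite{YANG10}, being essentially the complement of a barrier-free set) and a finite union of negligible sets is negligible. Throughout one uses $\sharp N(o,n)\asymp\exp(\e G n)$, the purely exponential growth property available under the hypotheses of the Main Theorem, so that ``negligible'' means ``of exponential growth rate strictly below $\e G$''.

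The device that makes the counting work is the following elementary observation. Given $g$ with $n=d(o,go)$ and a geodesic $\gamma=[o,go]$, decompose $\gamma$ into its maximal subsegments lying inside, resp.\ outside, $N_{M_2}(Go)$; call the latter the \emph{$M_2$-excursions}. A point $z$ of an $M_2$-excursion $q=[x,y]$ satisfies $d(z,Go)\le \len(q)/2+M_2$, since $x,y$ are boundary points and hence within $M_2$ of $Go$. Therefore, if $\len(q)\le 2L$ then $z\in N_{M_2+L}(Go)\subseteq N_M(Go)$. Consequently every point of $\gamma$ at distance $>M$ from $Go$ lies in an $M_2$-excursion of length $>2L$, i.e.\ in one of the ``long'' excursions $q_1,\dots,q_k$; in particular $k< n/(2L)$, and for $g\in\mathcal N$ one has $\sum_i\len(q_i)\ge\len(\gamma\setminus N_M(Go))>0.1\,n$.

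For the count, write $\gamma=v_0q_1v_1q_2\cdots q_kv_k$ as a concatenation of geodesic subsegments. Each of the $2k$ internal junctions is an endpoint of some $q_i$, hence within $M_2$ of an orbit point $a_jo$; with $a_0=1$ and $a_{2k+1}=g$, telescoping gives $g=\prod_{j=1}^{2k+1}(a_{j-1}^{-1}a_j)$, where $d(o,(a_{j-1}^{-1}a_j)o)$ equals the length of the $j$-th subsegment up to an additive $2M_2$, and where for even $j$ the element $a_{j-1}^{-1}a_j$ lies in $\mathcal O_{M_1,M_2}$ (the translated subsegment $q_{j/2}$ witnesses the definition: its endpoints are $M_2$-close to the required orbit points and its interior, being outside $N_{M_2}(Go)$, is a fortiori outside $N_{M_1}(Go)$). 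Thus $g$ is determined by the data $\big(k;\,m_1,\dots,m_{2k+1};\,$ the $2k+1$ group elements$\big)$ with $\sum_jm_j=n$ and $\sum_{j\ \mathrm{even}}m_j>0.1n-O(kM_2)$, and I would bound the number of such data by
$$\sum_{k<n/2L}\ \binom{n+O(kM_2)}{2k}\ \max\Big(\prod_{j\ \mathrm{even}}\sharp\big(\mathcal O_{M_1,M_2}\cap N(o,m_j+2M_2)\big)\ \prod_{j\ \mathrm{odd}}\sharp N(o,m_j+2M_2)\Big).$$
Using $\sharp(\mathcal O_{M_1,M_2}\cap N(o,m))\le\exp((\e{\mathcal O_{M_1,M_2}}+\epsilon)m)$ and $\sharp N(o,m)\le\exp((\e G+\epsilon)m)$ (with a bounded factor for the finitely many small $m$; note the long excursions automatically have $m_j$ large), together with $\sum_{j\ \mathrm{even}}m_j\ge 0.09\,n$ and $\sum_jm_j\le n$, this is at most $\mathrm{poly}(n)\cdot\exp\big((\e G+\epsilon')n-c\,n+(\beta(L)+O(1/L))n\big)$, where $c=0.09(\e G-\e{\mathcal O_{M_1,M_2}})>0$ and $\beta(L)\to 0$ as $L\to\infty$ by the binary-entropy estimate for $\binom{(1+O(M_2/L))n}{2n/L}$. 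Fixing $\epsilon$ small and then $L$ large so that $\epsilon'+\beta(L)+O(1/L)<c/2$ yields $\sharp(\mathcal N\cap N(o,n))\le\exp((\e G-c/2)n)$, which is what is needed.

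The main obstacle is precisely the one this device is built to defeat: a priori $[o,go]$ may leave and re-enter a \emph{fixed} neighborhood of $Go$ linearly many times, and then the combinatorial cost of recording the excursion pattern (of order $2^{n}$) overwhelms the exponential gain $0.1(\e G-\e{\mathcal O_{M_1,M_2}})n$ whenever the gap $\e G-\e{\mathcal O_{M_1,M_2}}$ is small. Enlarging the neighborhood to $N_M(Go)$ forces the surviving excursions to have length $\gtrsim M$, hence to number at most $\sim n/M$, so that both the combinatorial overhead $\beta(L)n$ and the accumulated additive junction errors $O(kM_2)=O(n/L)$ become negligible relative to the fixed gain. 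Everything else — the concatenation decomposition, the verification of membership in $\mathcal O_{M_1,M_2}$, and the $\limsup$ estimates for growth — is routine, as is passing from a bound on $\mathcal N\cap N(o,n)$ to the assertion about $\mathcal G$.
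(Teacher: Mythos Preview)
Your argument is correct, and the combinatorial core --- decompose $[o,go]$ at the endpoints of the long excursions, note there are at most $n/L$ of them, and absorb both the junction errors and the binomial entropy into the exponential gap by taking $L$ large --- is exactly the engine the paper uses in the proof of Theorem~\ref{FGrowthTightThm}. The difference is in what growth-tight set plays the role of the ``bad'' factor. You feed the definition of statistical convex-cocompactness in directly: each long excursion witnesses that the corresponding element lies in $\mathcal O_{M_1,M_2}$, and you use $\e{\mathcal O_{M_1,M_2}}<\e G$. The paper instead proves a more general statement (Theorem~\ref{FGrowthTightThm}) for \emph{fractional barrier-free} elements, using the growth tightness of the barrier-free set $\mathcal V_{\epsilon,M,P}$ from Theorem~\ref{GrowthTightThm}, and then deduces Corollary~\ref{LargePercentGeneric} by observing (via the contracting property) that a geodesic segment lying outside $N_M(Go)$ cannot contain an $(\epsilon,f^n)$-barrier for $n$ large. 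Your route is shorter and more self-contained for the specific statement at hand; the paper's route yields the fractional barrier-free theorem as a reusable intermediate result, and ties the argument into the barrier-free framework used throughout. Both enlarge the neighborhood in the same way (the paper ends up with $N_{M+L/2}(Go)$, you with $N_{M_2+L}(Go)$), so the constant $M$ in the displayed statement is in either case not the raw SCC constant but an enlargement of it.
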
 

Thus, the upper bound on the number of strongly primitive conjugacy classes is obtained in Corollary \ref{PrimConjUpBnd}.

\paragraph{\textbf{2. Strongly non-primitive ones are growth tight.}} {Counting strongly non-primitive conjugacy classes will require more effort}. {The idea however is simple: the number of non-primitive conjugacy classes is exponentially negligible, compared to the primitive ones.} The proof of this result, Lemma \ref{GenericPrim}, is intuitively clear, since a non-primitive element $g$ is a proper power of a primitive element $g_0$. An inspection of the argument shows a difficulty as follows. 

Let $h=h_0^m f$ be a non-strongly primitive element in $E(g)$ for $|m|\ge 2$ and $f\in F$. Define a map $\Pi$ sending $[h]$ to $[h_0]$. Clearly, the image of $\Pi$ is   exponentially negligible. But there is no reason that $\Pi$ is uniformly finite to one, since the size of $F$ can change. The following result fixes this issue.
 
\begin{lem} [Lemma \ref{UnifKernel}, Uniform kernel]
There exist  an exponentially generic set $\mathcal G$ of contracting elements and an integer $N>0$ such that for each $g\in \mathcal G$, we have
$$
1\to F \to E^+(g)\stackrel{\phi}{\to} \mathbb Z{\to} 1
$$
and $\sharp F\le N$.
\end{lem}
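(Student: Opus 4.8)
The plan is to show that for an exponentially generic set of contracting elements $g$, the finite normal subgroup $F \lhd E^+(g)$ appearing in the exact sequence $1 \to F \to E^+ \xrightarrow{\phi} \mathbb Z \to 1$ has bounded order. The core observation is that $F$ is precisely the (finite) kernel of the action of $E(g)$ on the two fixed points on the contracting boundary / the two ends of the axis $\tax(g)$, i.e. $F$ consists of the elements of $E^+(g)$ that both preserve orientation along the axis \emph{and} translate by zero; equivalently $F = \{h \in E^+(g) : \tau(h) = 0\}$, so every element of $F$ is elliptic and fixes (coarsely) every point of $\tax(g)$. The key geometric input, provided by the barrier machinery of \cite{YANG10} and recalled in the excerpt, is that a generic $g$ contains a high power $f^m$ of a fixed contracting element $f$ as an $(\epsilon, f^m)$-barrier on $[o,go]$; and more to the point, one can arrange (again generically, by prescribing a long barrier near the basepoint) that $\tax_R(g)$ passes within bounded distance of $o$, so $F \cdot o$ lies in a bounded ball $B(o, R')$ for a uniform $R' = R'(f,o)$. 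Since the action of $G$ on $\mathrm Y$ is proper, $B(o,R') \cap Go$ is finite, say of cardinality $N_0$, and the elements of $F$ all send $o$ into this finite set; combined with a uniform bound on the stabilizer of $o$ (again properness: $\mathrm{Stab}_G(o)$ is finite) this forces $\sharp F \le N := N_0 \cdot \sharp\,\mathrm{Stab}_G(o)$.

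The steps, in order, are: (i) recall from the construction of $E(g)$ and $E^+(g)$ that $F = \{h \in E^+(g): h \text{ coarsely fixes } \tax(g) \text{ pointwise}\}$, so every $h \in F$ satisfies $d(ho, o) \le d(o, \tax(g)) + \mathrm{diam}(h\cdot\{\text{coarse fixed set}\})$ up to the contracting/fellow-travelling constants of $g$ — more carefully, $h$ fixes each endpoint of the axis, hence moves any axis point a bounded amount, the bound being controlled by the contracting constant of $\tax(g)$, which is itself uniform because $\tax(g)$ lies in a bounded neighbourhood of a translate of $\tax(f)$ for generic $g$; (ii) invoke the simplified linear drift statement, Theorem \ref{simplifiedLinearDrift} (or rather its proof), to pass to the exponentially generic subset on which $d(o, \tax_R(g)) \le \theta_2 n$ with $\theta_2$ small and, crucially, on which a barrier of fixed type sits at bounded distance from $o$ — this pins $\tax_R(g)$, and hence the coarse fixed set of $F$, inside a ball $B(o, R')$ of uniform radius; (iii) conclude via properness that $F \cdot o \subseteq B(o, R') \cap Go$, a finite set of uniformly bounded cardinality, and then lift from orbit points back to group elements using finiteness of the point stabiliser to get $\sharp F \le N$.

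The main obstacle I expect is step (i)–(ii): making precise the claim that $F$ coarsely fixes a \emph{uniformly} controlled set, and in particular that this set can be pushed near $o$. The subtlety is exactly the one flagged in the excerpt — a priori $E(g)$, and with it $F$ and the contracting constants of $\tax(g)$, vary wildly with $g$, so one cannot argue with a single fixed constant unless one first restricts to the generic set where $g$ "looks like" a high power of the fixed $f$. Concretely, one needs: (a) that for generic $g$, $\tax(g)$ fellow-travels a translate $t\tax(f)$ with constants depending only on $f$ and on the barrier parameter $\epsilon$ (this is where one uses that $g$ has an $(\epsilon, f^m)$-barrier, so a fundamental domain of $\langle g\rangle$ on its axis is uniformly bounded in terms of $f^m$); and (b) that $F$, being the torsion part normal in $E^+(g)$, acts on $\tax(g)$ with uniformly bounded orbits — this should follow because $F$ normalizes $\langle g\rangle$, hence permutes the endpoints, hence (being in $E^+$, orientation-preserving, and of zero translation length) coarsely stabilizes $\tax(g)$ setwise with displacement bounded by the fellow-travelling constant. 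Once (a) and (b) are in hand, steps (ii) and (iii) are routine applications of genericity of the drift/position estimate and of properness of the action.
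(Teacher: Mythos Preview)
Your proposal has a genuine gap, and the paper takes a substantially different route that you should be aware of.

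The core issue is your step~(i)/(b): you assert that $F$ acts on $\tax(g)$ with \emph{uniformly} bounded orbits, with the bound coming from the fellow-travelling/contracting constant of $\tax(g)$. This does not follow from the tools available. Knowing that each $h\in F$ has finite order, fixes both ends of the axis, and preserves $\ax(g)$ setwise does not, in a general proper geodesic space, bound $d(x,hx)$ for $x$ on the axis by a constant independent of $g$. The contracting constant of $\tax(g)$ can indeed be made uniform for generic $g$ (via the periodic admissible path), but that only controls how far a geodesic $\alpha$ near $\ax(g)$ strays from $\ax(g)$; it says nothing about how far $h$ displaces individual points. In particular, your claim~(a) that ``$\tax(g)$ fellow-travels a translate $t\,\tax(f)$'' is simply false: the barrier gives only a bounded segment of $\tax(g)$ near $t\,\ax(f)$, not global fellow-travelling, and the fundamental domain of $\langle g\rangle$ on its axis has length $\sim d(o,go)$, not $\sim d(o,f^m o)$. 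There is also a secondary problem in step~(ii): Theorem~\ref{simplifiedLinearDrift} places the stable axis within $\theta_2 n$ of $o$, which grows with $n$; you cannot generically force a barrier to sit at \emph{bounded} distance from $o$ (Lemma~\ref{fractalbarrier} needs the interval length to be at least $n^{1-a}$). So even granting~(i), the ball $B(o,R')$ you want does not have uniform radius.

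The paper's proof avoids this circularity by leaving the space $\mathrm Y$ entirely. It builds the Bestvina--Bromberg--Fujiwara projection complex $\mathcal P_K(\mathbb X)$ from the contracting system $\mathbb X=\{g\,\ax(f_i)\}$, which is a quasi-tree (hence $\delta$-hyperbolic with a fixed $\delta$) on which, by the recent improvement of Bestvina--Bromberg--Fujiwara--Sisto, $G$ acts \emph{acylindrically}. The exponentially generic set $\mathcal G$ is then taken to be those $g$ whose minimal conjugate carries barriers for two independent $f_1,f_2$; the periodic admissible path for such $g$ yields a combinatorial axis in $\mathcal P_K(\mathbb X)$, so $g$ is loxodromic there (Lemma~\ref{LoxdroPC}). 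Now one runs Osin's short argument \emph{in the projection complex}: by hyperbolicity the coarse centre of $F\cdot x$ has diameter $\le D$ for a uniform $D$; normality of $F$ in $E^+(g)$ transports this along the loxodromic axis of $g$; and acylindricity then bounds $\sharp F$ by a uniform $N$. The uniformity comes for free because the hyperbolicity and acylindricity constants of $\mathcal P_K(\mathbb X)$ depend only on $\mathbb X$, not on $g$. This is precisely the missing ingredient your direct-in-$\mathrm Y$ approach lacks: you are in effect trying to reprove Osin's lemma in a space that is neither hyperbolic nor acted on acylindrically.
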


The proof of the lemma relies on the very recent work of Bestvina, Bromberg, Fujiwara and Sisto \cite{BBFS} improving the {earlier} work \cite{BBF} so that the action on the projection complex is acylindrical hyperbolic. We then prove that an exponentially generic set of elements act by loxodromic isometries on the projection complex. Then a result of Osin \cite[Lemma 6.8]{Osin6} concludes the proof.
 
From this result, we show that non-primitive conjugacy classes are exponentially negligible (Lemma \ref{GenericPrim}). Hence,  the upper bound for all conjugacy classes is proved in Corollary \ref{ConjUpBnd}.  

\paragraph{\textbf{3. Lower bound on conjugacy classes.}} The lower bound is by construction. By \cite{YANG10} (recalled in Lemma \ref{ClosingLemma}), there exists a maximal separated set $T$ in $A(o, n, \Delta)$ and a contracting element $f$ such that $T\cdot f$ consists of contracting elements {and has} the same cardinality as $T$. Following an argument of Coornaert and Knieper \cite{CoK1}, we show that each conjugacy class $[g]$  contains at most $\theta n$ elements in $T\cdot f$ for some uniform number $\theta>0$. Thus, we constructed at least $\displaystyle\frac{\exp(\e G n)}{n}$ conjugacy classes (see Corollary \ref{LbdConjGrowth}). Finally, the lower bound for primitive conjugacy classes  is a direct consequence of the growth tightness  of non-primitive ones mentioned above (Corollary \ref{LbdPrimConjGrowth}).

\paragraph{\textbf{The structure of the paper is as follows.}} {The preliminary   \textsection \ref{Section2} introduces the contracting property, and a class of periodic admissible paths. The core of   \textsection \ref{Section3} is Proposition \ref{StablePointedLength}  identifying the stable length with algebraic length.  Along the way,   the linear growth is proved in Theorem \ref{simplifiedLinearDrift}. Many conjugacy classes are constructed in \textsection \ref{Section4}, establishing the lower bound. Then \textsection \ref{Section5} deals with non-cocompact actions and obtains the upper bound for primitive conjugacy classes. Section \textsection \ref{Section6} addresses the issue of unbounded torsion. With previous ingredients in hand, the proof of the main theorem is completed in \textsection \ref{Section7}. The final   \textsection \ref{SecProofs} explains the applications to several specific classes of groups.}        

\ack
The authors are grateful to the referee for many  remarks, corrections and suggestions which  greatly improved the exposition of the paper. W. Y. is supported by the National Natural Science Foundation of China (No. 12131009).

\section{Preliminaries}\label{Section2}

\subsection{Notations and conventions}\label{ConvSection}
Let $(\mathrm Y, d)$ be a proper geodesic metric space.  Given a point $y \in Y$ and a closed subset $X \subset \mathrm Y$,
let $\pi_X(y)$ be the set of points $x$ in $X$ such that $d(y, x)=d(y, X)$. The \textit{projection} of a subset
$A \subset \mathrm Y$ to $X$ is then $\pi_X(A): = \cup_{a \in A} \pi_X(a)$. Whenever talking about projection, we shall assume the closedness of the subset $X$ under consideration so that $\pi_X(A)$ is nonempty.

Denote $d_X(Z_1, Z_2):=\diam{\pi_X({Z_1\cup Z_2})}$, which is the diameter of the projection of the union $Z_1\cup Z_2$ to $X$. So $d_X^\pi(\cdot, \cdot)$ satisfies the triangle inequality
$$
d_X^\pi(A, C) \le d_X^\pi(A, B) +d_X^\pi(B, C).
$$ 
{We use $d_X(Z):=\diam{\pi_X(Z)}$ as well in the sequel.}

We always consider a rectifiable path $\alpha$ in $\mathrm Y$ with arc-length parametrization.  Denote by $\len (\alpha)$ the length
of $\alpha$, and by $\alpha_-$, $\alpha_+$ the initial and terminal points of $\alpha$ respectively.   Let $x, y \in \alpha$ be two points which are given by parametrization. Then $[x,y]_\alpha$ denotes the parametrized
subpath of $\alpha$ going from $x$ to $y$. We also denote by $[x, y]$ a choice of a geodesic between $x, y\in\mathrm Y$.  
\\
\paragraph{\textbf{Entry and exit points}} Given a property (P), a point $z$ on $\alpha$ is called
the \textit{entry point} satisfying (P) if $\len([\alpha_-, z]_\alpha)$ is
minimal   among the points
$z$ on $\alpha$ with the property (P). The \textit{exit point} satisfying (P) is defined similarly so that $\len([w,\alpha_+]_\alpha)$ is minimal.

A path $\alpha$ is called a \textit{$c$-quasi-geodesic} for $c\ge 1$ if the following holds 
$$\len(\beta)\le c \cdot d(\beta_-, \beta_+)+c$$
for any rectifiable subpath $\beta$ of $\alpha$.

Let $\alpha, \beta$ be two paths in $\mathrm Y$. Denote by $\alpha\cdot \beta$ (or simply $\alpha\beta$) the concatenated path provided that $\alpha_+ =
\beta_-$.

Let $f, g$ be real-valued functions with domain understood in
the context. Then $f \prec_{c_i} g$ means that
there is a constant $C >0$ depending on parameters $c_i$ such that
$f < Cg$.  The symbols    $\succ_{c_i}  $ and $\asymp_{c_i}$ are defined analogously. For simplicity, we shall
omit $c_i$ if they are   universal constants. We also denote $f\simeq_{c_i} g$ if $|f-g|\le C$
\subsection{Contracting property}
\begin{defn}[Contracting subset]\label{ContrDefn}
Let $\mathcal {QG}$ denote a preferred collection of quasi-geodesics in
$\mathrm Y$. For given $C\ge 1$, a subset $X$ in $\mathrm Y$ is called $C$-\textit{contracting} with respect to $\mathcal {QG}$ if for any quasi-geodesic $\gamma \in \mathcal {QG}$ with $d(\gamma,
X) > C$, we have
$$d_{X} (\gamma)  \le C.$$
 A
collection of $C$-contracting subsets is referred to
as a $C$-\textit{contracting system} (w.r.t.
$\mathcal {QG}$).
\end{defn}
 
\begin{example} \label{examples} We note the following examples in various contexts.
\begin{enumerate}
\item
Quasi-geodesics and quasi-convex subsets are contracting with respect
to the set of all quasi-geodesics in hyperbolic spaces. 
\item
Fully quasi-convex subgroups (and in particular, maximal parabolic
subgroups) are contracting with respect to the set of all
quasi-geodesics in relatively hyperbolic groups (see Proposition
8.2.4 in \cite{GePo4}).
\item
The subgroup generated by a hyperbolic element is contracting  with
respect to the set of all quasi-geodesics in groups with non-trivial
Floyd boundary.  This is described in \cite[Section 7]{YANG6}.
\item
Contracting segments in CAT(0)-spaces in the sense of in
 Bestvina and  Fujiwara are contracting here with respect to the set of
geodesics (see Corollary 3.4 in \cite{BF2}).
\item
The axis of any pseudo-Anosov element is contracting relative to geodesics by   Minsky \cite{Minsky}.
\item
Any finite neighborhood of a contracting subset is still contracting
with respect to the same $\mathcal {QG}$.
\end{enumerate}
\end{example}

\begin{conv}\label{ContractingConvention}
In view of Examples \ref{examples}, the preferred collection
$\mathcal {QG}$ in the sequel will always be the set of all
geodesics in $\mathrm Y$.   
\end{conv}


We collect a few properties that will be used often later on. The proof   is a straightforward application of the contracting property, and is left to the interested reader.
\begin{prop}\label{Contractions}
Let $X$ be a contracting set.
\begin{enumerate}
\item
\label{qconvexity}  $X$ is \textit{$\sigma$-quasi-convex} for a function $\sigma: \mathbb R_+ \to \mathbb R_+$: given $c \ge
1$,  any $c$-quasi-geodesic with endpoints in $X$ lies in the
neighborhood $N_{\sigma(c)}(X)$. 
\item
 \label{nbhd}  Let $Z$ be a set with finite Hausdorff distance to $X$. Then $Z$ is contracting.

\end{enumerate}
\end{prop}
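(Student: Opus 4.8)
The final statement to prove is Proposition~\ref{Contractions}, which records two basic properties of a contracting set $X$: quasi-convexity with a function $\sigma$ depending only on the quasi-geodesic constant $c$, and stability of the contracting property under passing to a set at finite Hausdorff distance.

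\medskip

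The plan is to prove the two items separately, both directly from Definition~\ref{ContrDefn}.

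\textbf{Item (1), quasi-convexity.} Fix $c \ge 1$ and let $\gamma$ be a $c$-quasi-geodesic with $\gamma_-, \gamma_+ \in X$. I want a bound, depending only on $c$ (and the contraction constant $C$), on $d(x, X)$ for every $x \in \gamma$. Suppose not, i.e.\ there is a point $x$ on $\gamma$ with $d(x, X)$ large. Consider the entry point $p$ of $\gamma$ into the neighborhood $N_C(X)$ when traveling from $\gamma_-$, and likewise the exit point $q$ (traveling toward $\gamma_+$); by hypothesis $\gamma_-, \gamma_+$ themselves lie in $X \subset N_C(X)$, so these points exist and the subpath $[p,q]_\gamma$ stays at distance $> C$ from $X$ except possibly near its endpoints — more precisely, split $\gamma$ at the last point before $x$ lying in $N_C(X)$ and the first point after $x$ lying in $N_C(X)$, getting a subpath $\beta \subseteq \gamma$ with $d(\beta, X) > C$ (after a harmless $C$-perturbation of the endpoints of $\beta$ onto $X$) and with $x \in \beta$. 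Since $\beta$ is itself a $c$-quasi-geodesic and the perturbed $\beta$ has endpoints within $C$ of $X$ and satisfies $d(\beta, X) > C$, the contracting property gives $\proj_X(\beta) \le C$, so $d(\beta_-, \beta_+) \le \len(\beta) \le c\cdot d(\beta_-,\beta_+) + c$ forces $\len(\beta)$ — hence $d(x, X)$ — to be bounded. Chasing the constants, one finds $d(x,X) \le \sigma(c)$ for an explicit $\sigma(c)$ built from $c$ and $C$ (something on the order of $c(2C) + c + 2C$). The one subtlety to handle carefully is that the endpoints of the chosen subpath $\beta$ are only within $C$ of $X$, not on $X$, so one applies the contracting definition after moving them onto $X$ and absorbs the resulting $O(C)$ error into $\sigma$.

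\textbf{Item (2), stability under finite Hausdorff distance.} Let $Z$ have Hausdorff distance $h := d_{Haus}(X, Z) < \infty$. Take a geodesic $\gamma$ with $d(\gamma, Z) > C' := C + h$. Then $d(\gamma, X) \ge d(\gamma, Z) - h > C$, so by $C$-contraction of $X$ we get $\proj_X(\gamma) \le C$. Now $\pi_Z(\gamma) \subseteq N_{2h}(\pi_X(\gamma))$ — indeed any $z \in \pi_Z(y)$ for $y \in \gamma$ is within $h$ of some $x \in X$, and a closest point of $X$ to $y$ is within $2h + \proj$-type error of $z$; being slightly careful, $\diam(\pi_Z(\gamma)) \le \diam(\pi_X(\gamma)) + 4h \le C + 4h$. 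Hence $Z$ is $C''$-contracting with $C'' = \max\{C+h,\, C+4h\} = C + 4h$, proving the claim.

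\medskip

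I do not expect a genuine obstacle here; the content is routine once the contracting definition is unwound. The only place demanding care is the bookkeeping in item (1) — isolating the correct maximal subpath of $\gamma$ that lies outside $N_C(X)$ and containing the given far point $x$, then legitimately invoking Definition~\ref{ContrDefn} for that subpath after nudging its endpoints onto $X$ — so that $\sigma$ genuinely depends only on $c$ (and the fixed constant $C$) and not on $\gamma$. This is exactly the kind of argument the paper defers to the reader, so a proof sketch of this shape is all that is needed.
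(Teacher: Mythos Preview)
The paper gives no proof here beyond ``straightforward application of the contracting property, left to the interested reader,'' so there is nothing concrete to compare against; your sketch is exactly the sort of argument the authors have in mind, and your treatment of item~(\ref{nbhd}) is correct (the constant bookkeeping is slightly looser than your $C+4h$, but that is immaterial).

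For item~(\ref{qconvexity}) there is a genuine gap when $c>1$. You isolate the maximal excursion $\beta\subset\gamma$ outside $N_C(X)$ and then assert that ``the contracting property gives $\proj_X(\beta)\le C$.'' But under Convention~\ref{ContractingConvention} the class $\mathcal{QG}$ consists of \emph{geodesics} only, so Definition~\ref{ContrDefn} does not apply to the $c$-quasi-geodesic $\beta$; this is a different issue from the endpoint subtlety you flag. When $c=1$ the subpath $\beta$ is itself a geodesic and your argument goes through verbatim, yielding $\sigma(1)\le 4C$ or so. For $c>1$ one must actually prove that contracting for geodesics implies the Morse property, which is true and well known but not a one-line consequence of the definition: the standard route passes through an auxiliary fact such as ``nearest-point projection to $X$ is coarsely distance-decreasing outside $N_C(X)$,'' and then a balancing/subdivision argument. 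Since every use of Proposition~\ref{Contractions}(\ref{qconvexity}) in this paper (e.g.\ Lemma~\ref{GeodesicNearAxis}, the proof of Theorem~\ref{LinearDrift}) is for genuine geodesics, your $c=1$ argument already covers what is needed here; but as written your sketch does not establish the stated $c\ge 1$ version.
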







In most cases, we are interested in a  contracting system with the \textit{$\mathcal R$-bounded intersection} property for a function $\mathcal R: \mathbb R_{\ge 0}\to \mathbb R_{\ge 0}$ so that
$$\forall X\ne X'\in \mathbb X: \;\diam{N_r (X) \cap N_r (X')} \le \mathcal R(r)$$
for any $r \geq 0$. This property is, in fact, equivalent to the \textit{bounded intersection
property} of $\mathbb X$:  there exists a constant $B>0$ such that the
following holds
$$d_{X'}(X) \le B$$
for $X\ne X' \in \mathbb X$. See \cite[Lemma 2.3]{YANG6} for a proof of equivalence.
 
 Recall that $G$ acts properly on a proper geodesic metric space $(\mathrm Y, d)$.   An  element $h \in G$ is called  
\textit{contracting} if the orbit $\langle h \rangle\cdot o$ is contracting, and the orbital map
\begin{equation}\label{QIEmbed}
n\in \mathbb Z\to h^no \in \mathrm Y
\end{equation}
is a quasi-isometric embedding. Thus, any root or power of a contracting  element is contracting. Note that the set of contracting elements is preserved under conjugacy.

Given a contracting element $h$, we define a group 
\begin{equation}\label{Ehdefn}
E(h):=\{g\in G: d_H(\langle h\rangle o, g\langle h\rangle o)<\infty\}
\end{equation} 
where $d_H$ denotes the Hausdorff distance. By \cite[Lemma 2.11]{YANG10}, $E(h)$ is the unique maximal elementary group  containing $\langle h \rangle$ as a finite-index subgroup. Moreover, it can be described as follows, 
$$
E(h)=\{g\in G: \exists n > 0,\; (gh^ng^{-1}=h^n)\; \lor\;  (gh^ng^{-1}=h^{-n})\}.
$$

In what follows, the contracting subset 
\begin{equation}\label{axisdefn}
\ax(h)=\{f \cdot o: f\in E(h)\}
\end{equation} will be called the \textit{coarse axis} of $h$.  Hence, the collection $\{g \ax(h): g\in G\}$ is a contracting system with bounded intersection (by \cite[Lemma 2.11]{YANG10}).  Compare with combinatorial axis in Definition \ref{PeriAdmDef} and stable axis in Definition \ref{StabAxisDefn}.

 Two contracting elements $h, k\in G$ are \textit{independent} if the collection of contracting sets $\{g\ax(h),\; g\ax(k): g\in G\}$ has bounded intersection. Equivalently, they are independent if $E(h)$ and $E(k)$ are not conjugate  in $G$.
 
For $i=1, 2$, two  (oriented)  geodesics $\gamma_i: \mathbb R\to \mathrm Y$  have the \textit{same orientation}  if  $$
d_{H}(\gamma_1([o, +\infty]), \gamma_2([o, +\infty])) <\infty.
$$
An element $g\in G$ \textit{preserves the orientation} of an oriented geodesic $\alpha$ if $\alpha$ and $g\alpha$ have the same orientation.

The following fact is elementary.
\begin{lem}\label{GeodesicNearAxis}
For every contracting element $g\in G$,  there exists a bi-infinite geodesic $\alpha$ in a finite neighborhood of the axis $\ax(g).$ Moreover, the element $g$ preserves the orientation of the geodesic $\alpha$.
\end{lem}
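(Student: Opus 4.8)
The plan is to produce the bi-infinite geodesic as a limit of the geodesic segments $[g^{-n}o, g^n o]$, using properness of $\mathrm Y$ together with the fact that the orbit map $n \mapsto g^n o$ is a quasi-isometric embedding (this is part of the definition of a contracting element). First I would fix a basepoint $o$ and, for each $n \ge 1$, choose a geodesic $\alpha_n := [g^{-n}o, g^n o]$. Since $g$ is contracting, the sequence $n \mapsto g^n o$ is a quasi-geodesic, so by Proposition \ref{Contractions}(\ref{qconvexity}) (quasi-convexity of the contracting set $\langle g\rangle \cdot o$) applied to $\alpha_n$ — whose endpoints lie on $\langle g\rangle\cdot o$ — each $\alpha_n$ is contained in a uniform neighborhood $N_{\sigma}(\langle g\rangle\cdot o) \subset N_{\sigma'}(\ax(g))$, with $\sigma$ independent of $n$. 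In particular the $\alpha_n$ all pass within a bounded distance of $o$ (using that $o \in \langle g\rangle o$ and the quasi-isometric embedding gives $d(o, \alpha_n)$ bounded, in fact $d(o, \alpha_n) \le \sigma$). By properness of $\mathrm Y$ and the Arzel\`a--Ascoli argument for geodesics (a standard diagonal extraction), a subsequence of the $\alpha_n$ converges uniformly on compact sets to a bi-infinite geodesic $\alpha$. Since each $\alpha_n$ lies in $N_{\sigma'}(\ax(g))$, so does the limit $\alpha$, which gives the first assertion.

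For the orientation statement, I would argue that $g\alpha$ and $\alpha$ have finite Hausdorff distance and point "the same way". Both $\alpha$ and $g\alpha$ lie in a finite neighborhood of $\ax(g)$, which has bounded intersection with its distinct translates; since $g\ax(g) = \ax(g)$ (as $g \in E(g)$), the two geodesics $\alpha$ and $g\alpha$ are at finite Hausdorff distance from each other. It then remains to rule out that $g$ reverses orientation, i.e. that $d_H(\alpha([0,+\infty)), g\alpha((-\infty, 0])) < \infty$ while $d_H(\alpha([0,+\infty)), g\alpha([0,+\infty))) = \infty$. This cannot happen: if $g$ reversed the orientation of $\alpha$, then $g^2$ would fix a point of $\alpha$ up to bounded distance, forcing $\tau(g^2) = 0$, contradicting $\tau(g) > 0$ (equivalently, contradicting that $n \mapsto g^n o$ is a quasi-isometric embedding, hence proper). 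More directly: following $\alpha$ in its positive direction, the orbit points $g^n o$ move off to one end of $\alpha$ as $n \to +\infty$ (by the quasi-isometric embedding, $d(o, g^n o) \to \infty$ monotonically up to bounded error), and applying $g$ shifts this coherent sequence along $\alpha$, so $g$ must preserve the chosen end.

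The routine part is the limiting construction; the only point requiring a little care is the orientation claim, which is really the observation that a contracting element of positive stable length acts on its coarse axis like a translation rather than a reflection. I would phrase this via the quasi-isometric embedding $n \mapsto g^n o$: the subset $\{g^n o : n \in \mathbb Z\}$ sits within bounded Hausdorff distance of $\alpha$, and the induced (coarse) order on this subset from $\alpha$'s parametrization must be monotone in $n$ (otherwise $d(g^n o, g^m o)$ would fail to grow linearly in $|n - m|$), and $g$ acts on the index $n$ by $n \mapsto n+1$, hence preserves the coarse order, hence the orientation. I do not expect any genuine obstacle here; the statement is labelled elementary in the paper, and the above is exactly the expected argument.
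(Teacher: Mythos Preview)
Your proposal is correct and follows essentially the same approach as the paper. For the existence of $\alpha$, both you and the paper invoke quasi-convexity of the contracting set (Proposition \ref{Contractions}) and extract a limit of $[g^{-n}o,g^n o]$ by a diagonal/Arzel\`a--Ascoli argument.

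For the orientation claim, your argument and the paper's are close cousins but packaged differently. The paper argues by contradiction using properness of the action: it picks $x_n=\alpha(n)$, finds $k_n$ with $d(g^{k_n}o,x_n)\le R$, and shows that if $g$ reversed orientation then $d(g^{k_n+1}o,g^{k_m}o)$ would be uniformly bounded for pairs $n>0,\,m<0$, forcing (by properness) $k_n-k_m$ to take a fixed value infinitely often, which contradicts $d(x_n,x_m)\to\infty$. Your version instead uses the quasi-isometric embedding $n\mapsto g^n o$ directly to conclude that the coarse order on $\{g^n o\}$ induced by the parametrization of $\alpha$ is monotone in $n$, and that $g$ shifts this by $+1$. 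This is cleaner and avoids the explicit pigeonhole step via properness, though of course the QI embedding already encodes a quantitative form of properness. Your first attempt at the orientation argument (that $g^2$ would have $\tau(g^2)=0$) is morally right but needs the extra sentence that a coarse reflection of a quasi-line has a coarse fixed point, which you effectively supply in your third formulation; I would lead with the monotonicity argument rather than the $g^2$ sketch.
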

\begin{proof}
By Proposition \ref{Contractions}, the first statement  follows from the quasiconvexity of the contracting subset $\ax(g)$ by a Cantor diagonal argument. To prove the moreover statement, fix an orientation of $\alpha: \mathbb R\to   \mathrm Y$, and a basepoint $o=\alpha(0)\in \alpha$. Let $R>0$ be the Hausdorff distance between $\alpha$ and $\ax(f)$. Thus, for $x_n=\alpha(n)$, there exists $k_n\in \mathbb Z$ such that $d(g^{k_n} o, x_n)\le R$. 

Suppose to the contrary that $R'=d_{H}(\alpha([0, +\infty]), g\alpha([0, -\infty])) <\infty$. Thus, for every $n>0$, there exists $m<0$ such that $d(g x_n, x_m)\le R'$. Then $d(g^{k_n+1}o, g^{k_m}o)\le 2R+R'$. Since the action is proper, we obtain that $k_n-k_m=n_0$ for infinitely many $n>0, m<0$. However, this is a contradiction  since  $d(g^{k_n}o, g^{k_m}o )\ge d(x_n, x_m)-2R$  tends $\infty$ when $|n|, |m|\to+\infty$.  
\end{proof}

\begin{cor}\label{EPlush}
For a contracting element $h$, let $E^+(h)$ be the subgroup of orientation preserving elements in $E(h)$. Then $E^+(h)=\{g\in G: \exists n > 0,\; gh^ng^{-1}=h^n\}$ is of index at most two and contains all contracting elements in $E(h)$.
\end{cor}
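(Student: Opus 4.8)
The plan is to deduce everything from Lemma \ref{GeodesicNearAxis} together with the explicit description of $E(h)$ recalled before the lemma, namely
$$
E(h)=\{g\in G: \exists n>0,\; (gh^ng^{-1}=h^n)\;\lor\;(gh^ng^{-1}=h^{-n})\}.
$$
First I would fix, using Lemma \ref{GeodesicNearAxis}, a bi-infinite geodesic $\alpha$ lying in a finite neighborhood of $\ax(h)$ and oriented so that $h$ preserves its orientation. Since $\ax(h)=E(h)\cdot o$ has finite Hausdorff distance from $g\ax(h)=gE(h)\cdot o$ for every $g\in E(h)$ (indeed $E(h)$ is the stabilizer, up to finite Hausdorff distance, of $\langle h\rangle o$), it follows that for each $g\in E(h)$ the translated geodesic $g\alpha$ also lies in a finite neighborhood of $\ax(h)$, hence of $\alpha$; so $g\alpha$ is either co-oriented with $\alpha$ or anti-oriented with $\alpha$. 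This gives a well-defined homomorphism $\epsilon\colon E(h)\to \mathbb Z/2\mathbb Z$ recording whether $g$ preserves or reverses the orientation of $\alpha$ (it is a homomorphism because composing two orientation-reversals gives an orientation-preservation, using that $g\alpha$ and $\alpha$ being within finite Hausdorff distance is a $G$-equivariant relation). By definition $E^+(h)=\ker\epsilon$, so $[E(h):E^+(h)]\le 2$.

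Next I would identify $E^+(h)$ with $\{g\in G: \exists n>0,\; gh^ng^{-1}=h^n\}$. For the inclusion "$\supseteq$": if $gh^ng^{-1}=h^n$ for some $n>0$, then $g$ conjugates $\langle h^n\rangle$ to itself preserving the generator $h^n$, so $g$ maps the oriented quasigeodesic ray $\{h^{kn}o:k\ge 0\}$ into a finite neighborhood of itself (with matching orientation), hence $g\alpha$ is co-oriented with $\alpha$, i.e. $g\in E^+(h)$. Conversely, if $g\in E^+(h)\subseteq E(h)$, then by the description of $E(h)$ there is $n>0$ with $gh^ng^{-1}=h^{\pm n}$; I must rule out the minus sign. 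If $gh^ng^{-1}=h^{-n}$, then $g$ sends the positive ray of $\langle h\rangle o$ coarsely onto the negative ray, so $g\alpha$ is anti-oriented with $\alpha$ (this uses the "moreover" part of Lemma \ref{GeodesicNearAxis} that $h$ itself, hence $h^n$, preserves the orientation of $\alpha$), contradicting $g\in E^+(h)$. Hence the plus sign holds and the two descriptions agree.

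Finally, for the last claim that $E^+(h)$ contains all contracting elements of $E(h)$: any contracting element $k\in E(h)$ is in particular of infinite order (its orbital map is a quasi-isometric embedding of $\mathbb Z$), and $\langle k\rangle$ has finite index in $E(h)$, hence some positive power $k^m$ lies in $\langle h\rangle$, say $k^m=h^j$ with $j\ne 0$; replacing $k$ by $k^{-1}$ if necessary we may take $j>0$. Then $k$ commutes with $h^j=k^m$, so $kh^jk^{-1}=h^j$ with $j>0$, which by the description just established puts $k\in E^+(h)$.

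The only mild subtlety I anticipate is checking that $\epsilon$ is genuinely a homomorphism and well-defined independently of the chosen geodesic $\alpha$ — this rests on the fact that "being within finite Hausdorff distance and co-oriented" is an equivalence relation on the $E(h)$-orbit of oriented geodesics near $\ax(h)$, which is routine from $E(h)$-equivariance and the triangle inequality for Hausdorff distance; none of this requires the SCC or genericity machinery, only Lemma \ref{GeodesicNearAxis} and properness of the action.
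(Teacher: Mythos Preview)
Your proposal is correct and is precisely the argument the paper has in mind: the corollary is stated without proof immediately after Lemma \ref{GeodesicNearAxis}, and your derivation via the orientation homomorphism $\epsilon\colon E(h)\to\mathbb Z/2\mathbb Z$ together with the dichotomy $gh^ng^{-1}=h^{\pm n}$ is the natural unpacking.

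One small remark on the final claim: your commutation argument (find $k^m=h^j$, so $k$ centralizes $h^{|j|}$) is fine, but the step ``replacing $k$ by $k^{-1}$ if necessary'' is superfluous --- if $k^m=h^j$ with $j\ne 0$ then $kh^{|j|}k^{-1}=h^{|j|}$ regardless of the sign of $j$. Alternatively, and this is how the paper uses the corollary later (see the proof of Lemma \ref{StrongPrimitive}, which cites ``by Lemma \ref{GeodesicNearAxis}'' for this step), you can apply Lemma \ref{GeodesicNearAxis} directly to the contracting element $k$: since $\langle k\rangle o$ and $\langle h\rangle o$ have finite Hausdorff distance, the geodesic $\alpha$ lies in a finite neighborhood of $\ax(k)$ as well, and Lemma \ref{GeodesicNearAxis} says $k$ preserves its orientation.
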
 
\begin{rem}
A similar statement appears in Corollary 6.6 in \cite{DGO}, where the space $\mathrm Y$ is assumed to be $\delta$-hyperbolic.
\end{rem}

Since $E^+(h)$ is two-ended,   a theorem of Stallings implies the following exact sequence
\begin{equation}\label{ExactEg}
1\to F\to E^+(h) \stackrel{\phi}{\to} \mathbb Z\to 1,
\end{equation}
{where $F$ is a finite group} and $\mathbb Z$ is the group of integers.
Any element $g\in E^+(h)$ in $\phi^{-1}(\pm1)$ is called \textit{strongly primitive}.

\begin{lem} \label{StrongPrimitive}
A strongly primitive contracting element is a primitive element. 
\end{lem}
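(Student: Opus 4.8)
\textbf{Proof plan for Lemma \ref{StrongPrimitive}.} Let $g\in E^+(h)$ be strongly primitive, so $\phi(g)=\pm 1$ in the sequence \eqref{ExactEg}, and suppose toward a contradiction that $g=g_0^{\,n}$ for some $g_0\in G$ and $|n|\ge 2$. The plan is to show that $g_0$ must itself lie in $E^+(h)$, and then to derive a numerical contradiction by applying the homomorphism $\phi$.

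First I would observe that since $g=g_0^{\,n}$, the element $g_0$ commutes with $g$; in particular $g_0 g g_0^{-1}=g$. To invoke the elementary-group machinery, I need to know $E(g)=E(h)$ and $E^+(g)=E^+(h)$. For this I would note that $\langle g\rangle$ has finite index in $E(h)$: indeed $\phi$ restricted to $\langle g\rangle$ is onto $\mathbb Z$, so $[E^+(h):\langle g\rangle]\le \sharp F<\infty$, and $[E(h):E^+(h)]\le 2$ by Corollary \ref{EPlush}. Hence $\langle g\rangle\cdot o$ and $\langle h\rangle\cdot o$ lie within finite Hausdorff distance of each other (both within finite Hausdorff distance of $\ax(h)$), which by the definition \eqref{Ehdefn} gives $E(g)=E(h)$, and consequently $E^+(g)=E^+(h)$ since orientation-preservation is detected on sets at finite Hausdorff distance via Lemma \ref{GeodesicNearAxis}.

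Next, using the characterization of $E^+$ from Corollary \ref{EPlush}, the relation $g_0 g^1 g_0^{-1}=g^1$ with $g$ contracting shows directly that $g_0\in E^+(g)=E^+(h)$. (Here I use that $g$ is a contracting element — either as a hypothesis of the lemma, or noting it follows from $g\in E^+(h)$ together with $\langle g\rangle$ being finite-index in $E(h)$ and Proposition \ref{Contractions}.) Now apply $\phi$ to $g=g_0^{\,n}$: this yields $\pm 1=\phi(g)=n\,\phi(g_0)$ in $\mathbb Z$, which is impossible since $|n|\ge 2$. This contradiction shows no such $g_0$ exists, i.e.\ $g$ is primitive.

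The only genuinely delicate point is the identification $E^+(g)=E^+(h)$, which rests on $\langle g\rangle$ having finite index in $E(h)$ and on the fact that conjugating $g$ to $g$ (rather than $g^{-1}$) is exactly what the characterization in Corollary \ref{EPlush} detects; everything else is a short computation in the exact sequence \eqref{ExactEg}.
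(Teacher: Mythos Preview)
Your proposal is correct and follows the same overall strategy as the paper: assume $g=g_0^{\,n}$ with $|n|\ge 2$, show $g_0\in E^+(h)$, and then obtain the contradiction $\pm 1=\phi(g)=n\,\phi(g_0)$ in $\mathbb Z$. The only difference is in how $g_0\in E^+(h)$ is verified. The paper takes a shorter, more geometric route: since $\langle g_0\rangle\cdot o$ has finite Hausdorff distance from the contracting quasi-geodesic $\langle g\rangle\cdot o$, the element $g_0$ is itself contracting; then $g_0\in E(h)$ by the definition \eqref{Ehdefn}, and Corollary \ref{EPlush} (contracting elements of $E(h)$ lie in $E^+(h)$) finishes the step. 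Your route via the commutation $g_0 g g_0^{-1}=g$ and the identification $E^+(g)=E^+(h)$ is valid but slightly more roundabout; the detour through $E(g)=E(h)$ is unnecessary once you observe $g_0$ is contracting.
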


\begin{proof}
Let $g \in E(h)$ be  a strongly primitive contracting element. If $g$ is not primitive, then $g=g_0^k$ for some $|k|\ge 2, g_0\in G$. Since $g$ is contracting, it is readily checked that $\langle g_0 \rangle \cdot o$ is a contracting quasi-geodesic, which  implies  that $g_0$ is a contracting element. By definition of $E(h)$, we have $g_0\in E(h)$ and then $g_0\in E^+(h)$ by Lemma \ref{GeodesicNearAxis}. We obtain now  $\phi(g)=k \cdot \phi(g_0)\ne\pm 1$, contradicting the strong primitivity of $g$. 
\end{proof}
\begin{rem}
Note that the converse may not be true. However, a primitive but non-strongly primitive contracting element $g\in E(h)$ can be written as $g_0^k f$ for $|k|\ge 2$ and $f\in F\setminus \{1\}$, where $g_0$ is strongly primitive.
\end{rem}

\subsection{Periodic admissible paths}
In \cite{YANG6}, the notion of an admissible path is defined with respect  to   a  contracting system $\mathbb X$ in $\mathrm Y$. In the present paper, we shall focus on a particular class of admissible paths, which will serve as  coarse axes for contracting elements.

\begin{defn}[Periodic Admissible Path]\label{PeriAdmDef} 
Let $D, \tau>0$ and $\mathbb X$ be a contracting system with bounded projection.   Given  an element $g\in G$, a bi-infinite  path $\gamma =\cup_{i\in \mathbb Z} (q_i p_i)$ is called \textit{periodic $(g, D, \tau)$-admissible path} if the following hold
\begin{enumerate}
\item
each $p_i$ is a geodesic of length at least $D$ with two endpoints in $X_i \in \mathbb X$,  
\item
each  $q_i$ is a geodesic with $\tau$-bounded projection to $X_i$ and $X_{i-1}$:
$$
\max\{d_{X_i}(q_i), d_{X_{i+1}}(q_i)\}\le \tau,
$$
 
\item
  $q_ip_i=g^i(q_0p_0)$ for $i\in \mathbb Z$. 
\end{enumerate}
The collection  of $X_i\in \mathbb X$ is called a \textit{combinatorial axis} of the element $g$, denoted by $\mathbb A(g)$.
\end{defn}
 
\begin{rem}
The main difference with the definition in \cite{YANG6} is the third item which gives the way to represent   the admissible path periodically. For large $D\gg 0$, it is easy to see that we must have $X_i=g^iX_0$ by bounded intersection of $X_i\in \mathbb X$.

 The collection $\mathbb A(g)$ was called the \textit{saturation} of $\gamma$  in \cite{YANG10}.   Here it can be thought of as an axis  with respect to the action of $G$ on the projection complex built from the collection $\mathbb X$, where  each $X\in \mathbb X$ is collapsed to one vertex. Compare with a similar notion in \cite[Prop 3.26]{BBF} and see Lemma \ref{LoxdroPC}.
\end{rem}


Since a periodic   admissible path is a special case of the more general notion of an admissible path, the results proved in \cite{YANG6} and \cite{YANG10} apply here. We summarize the properties of periodic admissible paths as follows. 
 
\begin{prop}\label{admissible} 
Let $\mathbb X$ be a $C$-contracting system with $\mathcal R$-bounded projection. For any $\tau>0$, there are constants $D=D(\tau, {C},  {\mathcal R}), \epsilon = \epsilon(\tau, {C},   {\mathcal R})>0$ such that the following
holds.

Let    $g\in G$ be an element admitting a periodic $(g, D, \tau)$-admissible path $\gamma=\cup_{i\in \mathbb Z} (q_i p_i)$. Then
\begin{enumerate}
    \item 
    $g$ is a contracting element. 
    \item
    for any $i<j$ we have $\pi_{X_i}(X_j)\subset B((p_i)_+,\epsilon)$ and  $\pi_{X_j}(X_i)\subset B((p_j)_-,\epsilon)$.
    \item
    for any bi-infinite geodesic   $\alpha$ in a finite neighborhood   of  $\gamma$, we have that $\alpha$ intersects the $\epsilon$-neighborhood of $(p_{i})_-$ and $(p_{i})_+$.
\end{enumerate}

 
\end{prop}
\begin{proof}
By Proposition 2.9.2 in \cite{YANG10} we know that $\gamma$ is a contracting quasi-geodesic, so $g$ is contracting by definition. 

The assertion (3) was proved in \cite{YANG10} where $\gamma$ is a finite path with the same endpoints as $\alpha$. This  followed from  \cite[Corollary 3.7]{YANG6} of the bounded projection to $X=X_i$ from both sides: there exists $B=B(\sigma,  {\mathcal R})>0$ such that 
\begin{equation}\label{2sidesbddprojEQ}
\max\{\proj_{X}(\beta_1), \proj_{X}(\beta_2)\}  \le B
\end{equation}
where $\beta_1$ is the left one-sided infinite subpath of $\gamma$ issuing from $(p_i)_-$ and $\beta_2$ is the right one from $(p_i)_+$. 

\begin{figure}[htb] 
\centering \scalebox{0.7}{
\includegraphics{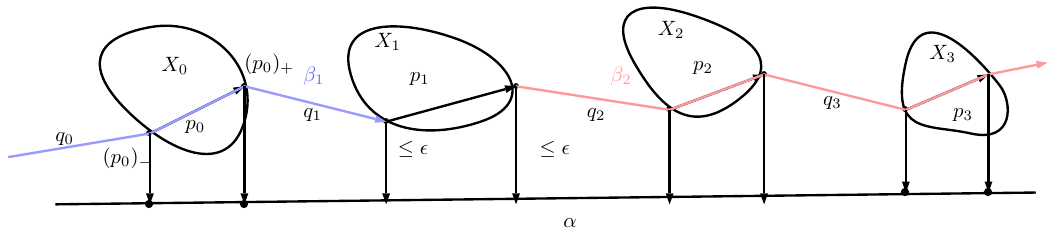} 
} \caption{Fellow travel periodic admissible paths.} \label{figure0}
\end{figure}

In the current setting, we first note  $\alpha \cap N_C(X)\ne\emptyset$. Indeed, if $\alpha$ is disjoint from $N_C(X)$, then $d_{X}(\alpha)\le C$ by contracting property of $X$. Say for some $R>0$, $\gamma$ stays in an $R$-neighborhood of $\alpha$ by the assumption. Then we can take two points $z_i\in \beta_i$ ($i=1,2$) far from $N_{R}(X)$ such that  $d(z_i, \alpha)\le R$ and thus the ball centered at $z_i$ of radius $R$ misses $X$. By the contracting property, $d_{X}(B(z_i, R))\le C$ for  each ball around $z_i$ of radius $R$. By a projection argument, we obtain $$\len(p_i)\le d_{X}(\alpha)+ \sum_{i=1,2} \Large( d_{X}(\beta_i) +d_{X}(B(z_i, R)) \Large)\le 2B+3C.$$ This gives a contradiction if $D>2B+3C$ is assumed. Hence $\alpha \cap N_C(X) \ne \emptyset$ is proved. Let $\alpha_1$ be the left geodesic ray in $\alpha \setminus  N_C(X)$ so we have $d_X(\alpha_1)\le C$ and $d(\alpha_1)_-, X)\le C$. Again a projection argument shows $$d((p_i)_-, \alpha_1)\le  d_X(\alpha_1)+ d_X(\beta_1) +C\le B+2C.$$ Set $\epsilon=B+2C$ completes the proof of (3).  

The assertion (2) is an immediate consequence of (3). Indeed,  let $\alpha$ be any geodesic segment from $X_j$ to $X_i$ for $i<j$. By (3), we have $\alpha\cap B((p_i)_+, \epsilon)\ne \emptyset$, so $\pi_{X_i}(X_j)\subset B((p_i)_+,\epsilon)$.
\end{proof}

\subsection{Exponential negiligibility of barrier-free elements} Recall that the notion of a statistically convex-cocompact action is given in Definition \ref{StatConvex}, so that 
$$\e {\mathcal O_{M_1, M_2}} < \e G<\infty$$
for some constants $M_1, M_2>0$. Here, $\mathcal O_{M_1, M_2}$ is  the set  of elements $g\in G$ such that there exists some geodesic $\gamma$ between $\gamma_-\in B(o, M_2)$ and $\gamma_+\in B(go, M_2)$ with the property that the interior of $\gamma$ lies outside $N_{M_1}(Go)$.   In the applications under consideration, since $\mathcal O_{M_2, M_2} \subset \mathcal O_{M_1, M_2}$, we can assume that $M_1=M_2$ and from now on, denote $\mathcal O_M:=\mathcal O_{M, M}$ for an easy notation.  

The next tool in our study  is the (exponential) negiligibility of a class of barrier-free elements. A set $X$ in $G$ is called \textit{generic} if 
$$
\frac{\sharp X\cap N(o, n)}{\sharp N(o, n)}\to 1,
$$
as $n\to \infty$. It is called \textit{exponentially generic} if 
$$
\frac{\sharp N(o,n)\setminus X}{\sharp N(o, n)}\le \exp(-\epsilon n),
$$
for some $\epsilon>0$ and all $n\gg 0$.

By definition, a subset $X$ of $G$ is called \textit{growth tight} if $\e X<\e G$. If $G$ has purely exponential growth then the growth tightness of a subset  is equivalent to its exponential negiligibility.

We now recall a notion of barriers on a geodesic and an element from \cite{YANG10}. 
\begin{defn}\label{barriers}
Fix constants $\epsilon, M>0$ and a set $P$ in $G$. \begin{enumerate}
\item 
Given $\epsilon>0$ and $f\in P$, we say that a geodesic $\gamma$ contains an \textit{$(\epsilon, f)$-barrier}   if there exists  an element $t \in G$ so that 
$$
\max\{d(t\cdot o, \gamma), \; d(t\cdot fo, \gamma)\}\le \epsilon.
$$
If  there exists no  $t \in G$   so that the above inequality holds, then $\gamma$ is called \textit{$(\epsilon, f)$-barrier-free}.

 Generally, $\gamma$ is called \textit{$(\epsilon, P)$-barrier-free} if it is $(\epsilon,f)$-barrier-free for some $f\in P$. An obvious fact is that  any subsegment of $\gamma$ is also $(\epsilon, P)$-barrier-free.

\item 
An element $g\in G$ is \textit{$(\epsilon, M, P)$-barrier-free} if there exists  an $(\epsilon, P)$-barrier-free geodesic between $B(o, M)$ and $B(go, M)$.  {Denote by $\mathcal V_{\epsilon, M, P}$ the set of $(\epsilon, M, P)$-barrier-free elements in $G$.}
 
\end{enumerate}
\end{defn}

\begin{rem}
By abuse of language and for simplicity, we shall say that the contracting subset $t\cdot \ax(f)$ (or even the element $t$ when $f$ is clear in context) is an $(\epsilon, f)$-barrier. 
\end{rem}

\begin{thm}\cite[Thm. C \& Cor. 4.5]{YANG10}\label{GrowthTightThm}
Assume that a non-elementary group $G$ acts properly on a proper geodesic metric $\mathrm Y$ with a contracting element. Then for any $M\gg 0$ there exists $\epsilon>0$ such that   the following properties hold for any  element $f\in G$:

\begin{enumerate}
\item
If the action is SCC, then the barrier-free set $\mathcal V_{\epsilon, M, f}$ is exponentially negligible.
\item
If the    action has PEG, then the barrier-free set $\mathcal V_{\epsilon, M, f}$ is negligible.
\end{enumerate}

\end{thm}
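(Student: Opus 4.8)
The plan is to prove that, under the SCC hypothesis, the barrier-free set $\mathcal V_{\epsilon,M,f}$ is \emph{growth tight}: $\e{\mathcal V_{\epsilon,M,f}} < \e G$. Since a SCC action with a contracting element has purely exponential growth, $\sharp N(o,n) \asymp \exp(\e G n)$, this gives that the proportion $\sharp(\mathcal V_{\epsilon,M,f}\cap N(o,n))/\sharp N(o,n)$ decays exponentially; that is, the barrier-free elements are exponentially negligible --- equivalently, the set of elements every one of whose geodesics from $B(o,M)$ to $B(go,M)$ carries an $(\epsilon,f)$-barrier is exponentially generic --- which is statement (1) (compare \cite{YANG10}). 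The crux is the observation that, for a well-chosen $\epsilon$, being $(\epsilon,f)$-barrier-free already forces an element into the exceptional set $\mathcal O_{M_1,M}$ of Definition \ref{StatConvex}.

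Concretely, by the SCC hypothesis we may fix a constant $M_1 = M_1(M) > 0$ for which $\e{\mathcal O_{M_1,M}} < \e G$ (such a choice exists once $M_1$ is large enough, cf.\ \cite{YANG10}), and put $\epsilon := M_1 + d(o,fo) + 1$ (so $\epsilon$ depends on $M$ and on the geometry of $f$). I claim $\mathcal V_{\epsilon,M,f} \subseteq \mathcal O_{M_1,M}$. Let $g \in \mathcal V_{\epsilon,M,f}$ and let $\gamma$ be an $(\epsilon,f)$-barrier-free geodesic with $\gamma_- \in B(o,M)$ and $\gamma_+ \in B(go,M)$. If some interior point $x$ of $\gamma$ satisfied $d(x,to)\le M_1$ for some $t\in G$, then $d(t\cdot o,\gamma)\le M_1\le\epsilon$ and
\[
d(tf\cdot o,\gamma)\ \le\ d(tfo,to)+d(to,\gamma)\ =\ d(o,fo)+d(to,\gamma)\ \le\ d(o,fo)+M_1\ <\ \epsilon,
\]
so $t$ would witness an $(\epsilon,f)$-barrier on $\gamma$, contradicting barrier-freeness. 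Hence the interior of $\gamma$ is disjoint from $N_{M_1}(Go)$, which is precisely the defining property of $\mathcal O_{M_1,M}$; thus $g\in\mathcal O_{M_1,M}$. Therefore $\e{\mathcal V_{\epsilon,M,f}}\le\e{\mathcal O_{M_1,M}}<\e G$, and (1) follows as explained.

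For item (2) the inclusion $\mathcal V_{\epsilon,M,f}\subseteq\mathcal O_{M_1,M}$ still holds verbatim, but the PEG hypothesis no longer provides the gap $\e{\mathcal O_{M_1,M}}<\e G$, so a different mechanism is needed. Here I would run the extension/surgery argument in the spirit of \cite{YANG11}: a barrier-free geodesic $\gamma$ has $\tau$-bounded projection to every $G$-translate of $\ax(f^m)$ for a fixed large power $f^m$, with $\tau$ depending on $\epsilon$ via the contracting property of $\ax(f^m)$ and Proposition \ref{Contractions}; grafting a copy of $[o,f^m o]$ onto $\gamma$ produces an element that carries a genuine barrier, and the resulting concatenation is an admissible path in the sense of \cite{YANG6}, hence a uniform quasi-geodesic by Proposition \ref{admissible}; a coarse-injectivity count then embeds $\mathcal V_{\epsilon,M,f}\cap N(o,n)$, with uniformly bounded multiplicity, into a set of cardinality $o(\sharp N(o,n))$ once $\sharp N(o,n)\asymp\exp(\e G n)$ (the PEG property). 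I expect item (2) to be the real difficulty: the passage to $\mathcal O_{M_1,M}$ handling (1) is essentially formal, whereas without the exponential gap from SCC one must squeeze negligibility out of the PEG asymptotics alone, and the naive ``append a barrier'' surgery only gives $\e{\mathcal V_{\epsilon,M,f}}\le\e G$ --- so the bookkeeping has to be set up with care, and uniformly in $M$ and in the geometry of $f$.
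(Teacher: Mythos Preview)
The paper does not include a proof of this theorem; it is imported from \cite{YANG10} and \cite{YANG11}, so there is no ``paper's own proof'' to compare against in detail.

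Your argument for (1) has a genuine gap in the quantifiers. The theorem asserts that $\epsilon$ can be chosen depending on $M$ alone, uniformly over all $f\in G$; you set $\epsilon = M_1 + d(o,fo) + 1$, which depends on $f$. This is not a harmless slip. With a fixed $\epsilon$ independent of $f$, an $(\epsilon,f)$-barrier requires \emph{both} $to$ and $tfo$ to lie within $\epsilon$ of the geodesic, and for long $f$ this is a genuine fellow-travelling condition along a translate of the $f$-axis; a geodesic can visit $N_{M_1}(Go)$ repeatedly and still be $(\epsilon,f)$-barrier-free, so the inclusion $\mathcal V_{\epsilon,M,f}\subseteq\mathcal O_{M_1,M}$ fails for the uniform $\epsilon$. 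What your argument actually proves is the weaker statement $\forall M\ \forall f\ \exists\,\epsilon:\ \mathcal V_{\epsilon,M,f}\subseteq\mathcal O_{M_1,M}$; since $\mathcal V_{\epsilon',M,f}\subseteq\mathcal V_{\epsilon,M,f}$ whenever $\epsilon'\ge\epsilon$, you are only controlling the \emph{smallest} of these barrier-free sets, not the one the theorem is about.

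The proof in the cited papers is in fact closer to what you sketch for item (2): one uses the contracting property of a fixed reference element to run an extension/insertion argument, bounding $\sharp(\mathcal V_{\epsilon,M,f}\cap N(o,n))$ via a map into $G$ that is injective up to bounded multiplicity. That mechanism produces a uniform $\epsilon$ (essentially coming from the contraction constant and $M$) and gives both growth-tightness under SCC and negligibility under PEG. Your sketch of (2) is on the right track but remains only a sketch; the care you anticipate --- making the coarse injectivity count beat $\exp(\e G n)$ rather than merely match it --- is exactly the substance of the argument in \cite{YANG10,YANG11}, and it is not a consequence of the containment you wrote down for (1).

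(A side remark: the theorem as printed says the barrier-free set is ``exponentially generic''; this is evidently a typo for ``exponentially negligible'', as the surrounding text and every later invocation make clear. You interpreted it correctly.)
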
 
 
We remark that if the action is SCC, then it has purely exponential growth. However, the converse is not true: there exists examples of geometrically finite Kleinian groups on Hadamard manifolds with PEG actions but without the parabolic gap property. Indeed, M. Peign\'e \cite{Peigne} constructed a class of exotic Schottky groups acting geometrically finitely on a simply connected Hadamard manifold without parabolic gap property so that the corresponding Bowen-Margulis-Sullivan measure is finite. By Roblin's work \cite{Roblin}, the finiteness of  Bowen-Margulis-Sullivan measure is equivalent to the purely exponential growth of the action.



\section{Genericity properties of contracting elements}\label{Section3}

The goal of this section is to choose an exponentially generic set of contracting elements so that we can compute efficiently their stable length. We start by recalling some results from \cite{YANG11} about genericity of contracting elements.  

Throughout this section, let $M>0$ be the constant appearing in the definition of a statistically convex-cocompact action. Let $\epsilon=\epsilon(M)>0$ given by Theorem \ref{GrowthTightThm} so that the barrier-free set  $\mathcal V_{\epsilon, M, f}$ is exponentially negligible for any $f\in G$. We shall omit $M$ in $\mathcal V_{\epsilon, M, f}$ when it is clear from context.

\subsection{Genericity of contracting elements}
In \cite{YANG11}, it is  proved that contracting elements are (resp. exponentially) generic if the proper action is PEG (resp.  SCC). The proof of this result replies on the following more general result. Recall that $\mathcal V_{\epsilon, f}$ denotes the set of $(\epsilon, f)$-barrier-free elements.

\begin{thm} \cite[Theorem 4.1]{YANG11} \label{Conj2BarrierFree}
For each contracting element $f\in G$   the set of elements in $G$ conjugated into $\mathcal V_{\epsilon, f}$ is exponentially negligible for SCC actions.
\end{thm}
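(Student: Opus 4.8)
The statement to prove is: for a contracting element $f$, the set $C(\mathcal V_{\epsilon, f})$ of elements $g \in G$ such that $g$ is conjugate to some element of $\mathcal V_{\epsilon, f}$ is exponentially negligible (equivalently, growth tight) for SCC actions. By Theorem \ref{GrowthTightThm}(1) we already know $\mathcal V_{\epsilon, f}$ itself is exponentially negligible; the content here is that taking the conjugacy-closure does not destroy this. The strategy is to bound the growth of $C(\mathcal V_{\epsilon, f})$ by reducing every element of it to a ``short'' representative lying in $\mathcal V_{\epsilon, M', f}$ for a suitable $M'$, and controlling the overcounting.

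First I would set up the following key observation. If $g \in C(\mathcal V_{\epsilon, f})$, pick $h \in \mathcal V_{\epsilon, f}$ conjugate to $g$ of minimal orbital displacement $d(o, h o)$ among all conjugates of $g$ (this infimum is realized since the action is proper). Write $g = w h w^{-1}$. The point is that $h$ being $(\epsilon, f)$-barrier-free means that \emph{no} conjugate of $h$ with small displacement can "develop" an $f$-barrier — more precisely, one should show that the minimal-displacement representative $h$ itself is $(\epsilon, M, f)$-barrier-free for the SCC constant $M$, because a geodesic $[o, ho]$ cannot contain an $(\epsilon, f)$-barrier: if it did, then by the standard "windmill"/extension argument (using that $\ax(f)$ is contracting with bounded intersection, Proposition \ref{admissible}), one could conjugate $h$ to strictly shorten it, contradicting minimality. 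Thus $h \in \mathcal V_{\epsilon, f} \cap (\text{min-length representatives})$, and in particular $h$ is $(\epsilon, M, f)$-barrier-free, so $h$ lies in $\mathcal V_{\epsilon, M, f}$, which is the set controlled by Theorem \ref{GrowthTightThm}.

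Next, I would count. Partition $C(\mathcal V_{\epsilon, f}) \cap N(o,n)$ according to the minimal-length conjugate $h$: for each conjugacy class we have $d(o, ho) \le \ell_o[g] \le d(o, go) \le n$, so $h$ ranges over $\mathcal V_{\epsilon, M, f} \cap N(o, n)$, a set of size $\preceq \exp((\e{}{\mathcal V} )n)$ with $\e{}{\mathcal V} < \e G$ by Theorem \ref{GrowthTightThm}(1) and the SCC hypothesis. The difficulty — and the main obstacle — is that a single conjugacy class $[h]$ may contain \emph{many} elements $g = whw^{-1}$ of length $\le n$, so one cannot naively multiply $\sharp\{\text{classes}\}$ by a constant. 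This is the familiar issue in conjugacy-counting: the conjugating element $w$ is only well-defined modulo the centralizer $E(h)$ (using that $h$, being a minimal representative that meets a barrier situation, is in fact contracting, so its centralizer is virtually cyclic and the orbit $\langle h\rangle o$ is contracting), but elements of $w E(h)$ can still produce exponentially many distinct conjugates. The resolution is to control how $d(o, whw^{-1}o)$ grows in terms of $d(o, wo)$: using the contracting property of $\ax(h)$, one shows $d(o, whw^{-1}o) \ge 2 d(o, wo) - 2d(o,ho) - (\text{const})$ once $w$ moves $o$ far from $\ax(h)$; hence for $whw^{-1}$ to have length $\le n$, the coset representative $w$ must satisfy $d(o,wo) \le n/2 + O(d(o,ho))$. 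Combining: the number of $g \in C(\mathcal V_{\epsilon,f}) \cap N(o,n)$ is at most roughly $\sum_{h} \sharp\{w E(h) : d(o, wo) \lesssim n/2 + d(o,ho)\} \preceq \sum_{m \le n} \exp(\e{}{\mathcal V} m)\cdot \exp(\e G (n/2 + O(m)))$. The gain of the factor $1/2$ in the $\e G$-term, against the deficit $\e{}{\mathcal V} < \e G$ in the other, must be balanced; the cleanest route is to optimize the split $d(o, ho) = m$, $d(o, wo) \approx (n - m)/2$ (so that the two pieces of the geodesic $[o, whw^{-1}o]$ — which fellow-travels $[o,wo]$, then $w\ax(h)$, then $w[ho,o]$ — add up correctly), giving a bound $\preceq \sum_m \exp(\e{}{\mathcal V} m + \e G (n-m)) = \exp(\e G n)\sum_m \exp(-(\e G - \e{}{\mathcal V})m)$, which is $\preceq \exp((\e G - \delta)n)$ for suitable $\delta > 0$ provided $m$ is forced to be a definite proportion of $n$ — and indeed it is, because if $m$ is small then $h$ is a short element admitting an $f$-barrier only in a bounded set, which (again by SCC / the barrier-free estimates of \cite{YANG11}) can only happen for $o(\exp(\e G n))$ choices. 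I expect the delicate point to be making the displacement estimate $d(o, whw^{-1}o) \gtrsim 2d(o,wo) - 2d(o,ho)$ precise and uniform in $h$, and dovetailing it with the SCC deficit so the geometric series converges; this is exactly where the contracting/bounded-intersection machinery of Proposition \ref{admissible} and the length comparisons of Section \ref{Section3} must be invoked carefully.
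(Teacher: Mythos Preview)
The paper does not give a proof of this theorem: it is imported wholesale from \cite[Theorem 4.1]{YANG11}, so there is nothing here to compare your argument against. That said, your sketch contains genuine gaps that would prevent it from going through.

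First, your reduction step is circular. You want the minimal-displacement representative $h$ of $[g]$ to lie in $\mathcal V_{\epsilon, M, f}$, and you argue that if $[o,ho]$ contained an $(\epsilon,f)$-barrier one could conjugate to shorten $h$. But the presence of a barrier $t\cdot\ax(f)$ near $[o,ho]$ gives no obvious conjugator that decreases $d(o,ho)$; the admissible-path machinery of Proposition~\ref{admissible} produces a quasi-axis for $h$, not a shorter conjugate. In fact the paper uses the logic in the \emph{opposite} direction: Lemma~\ref{TightConjugacyBarrier} deduces from the present theorem that elements whose minimal representative is barrier-free form a negligible set, not the other way around.

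Second, and more seriously, you assume that the minimal representative $h$ is contracting in order to invoke that $E(h)$ is virtually cyclic and that $\ax(h)$ is contracting. But barrier-free elements are precisely the ones that \emph{fail} to see the contracting geometry of $f$; indeed the paper notes (just before Lemma~\ref{TightConjugacyBarrier}) that all non-contracting elements have barrier-free minimal representatives. So for a typical $h\in\mathcal V_{\epsilon,f}$ there is no reason for $E(h)$ to be elementary, and your displacement estimate $d(o,whw^{-1}o)\gtrsim 2d(o,wo)-2d(o,ho)$ has no footing.

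Third, even granting those two steps, your final count does not close. The sum $\sum_m \exp(\e{\mathcal V} m + \e G(n-m)) = \exp(\e G n)\sum_m \exp\bigl(-(\e G-\e{\mathcal V})m\bigr)$ is only $\preceq \exp(\e G n)$, which is purely exponential growth, not growth tightness. Your attempted rescue (``$m$ is forced to be a definite proportion of $n$, because if $m$ is small then $h$ is a short element admitting an $f$-barrier\ldots'') confuses barrier-free with barrier-containing: when $m$ is small, $h$ ranges over a \emph{finite} set of short barrier-free elements, but each such conjugacy class can still meet $N(o,n)$ in $\asymp\exp(\e G n)$ elements unless you already know single conjugacy classes are growth tight---which, for non-contracting $h$, is essentially the content of the theorem you are trying to prove.
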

\begin{rem}
Parallel  to Theorem \ref{GrowthTightThm}, there is an additional statement that, when the action is PEG,   the above set in the conclusion is negligible. This might be used to generalize  \ref{mainthm} to any proper PEG action.
\end{rem}
 
With the fixed basepoint $o$ in mind, an element $g\in G$ in its conjugacy class is called \textit{minimal} if $d(o, go)\le d(o, ho)$ for any $h\in [g]$. The second named author proved in \cite[Theorem 3.1]{YANG11} that non-contracting elements admit  minimal conjugacy representatives in  $\mathcal V_{\epsilon, f^m}$ for some $m>0$. Together with Theorem \ref{Conj2BarrierFree} this implies that  non-contracting elements are exponentially negligible. As a consequence, in order to count conjugacy classes ordered by algebraic length, it suffices to consider contracting elements. Moreover, we can consider the set of elements admitting a minimal representative with an $(\epsilon, f)$-barrier. We record this observation into the following.

\begin{lem}\label{TightConjugacyBarrier}
For a SCC action, the set of non-contracting elements is  exponentially negligible. Moreover, the set of   elements which admit a minimal $(\epsilon, f)$-barrier-free representative is exponentially negligible.  
\end{lem}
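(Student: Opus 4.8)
The plan is to deduce Lemma~\ref{TightConjugacyBarrier} directly from the results already collected in the excerpt, namely Theorem~\ref{Conj2BarrierFree} and the structural fact recalled after it (Proposition 3.1 of \cite{YANG11}). First I would fix a contracting element $f\in G$ and the constant $\epsilon=\epsilon(M)$ furnished by Theorem~\ref{GrowthTightThm}, so that $\mathcal V_{\epsilon,M,f}$ is exponentially negligible for the SCC action. The two assertions of the lemma are really two faces of the same statement: an element $g\in G$ is ``bad'' precisely when some minimal-length representative of its conjugacy class $[g]$ lies in a barrier-free set, and I want to bound the growth of the set of such $g$.

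For the first assertion, I would argue as follows. If $g$ is non-contracting, then I claim some conjugate $g'\in[g]$ realizing $\len_o[g]=d(o,g'o)$ is $(\epsilon,f^m)$-barrier-free for a suitable $m$. This is exactly the content of \cite[Proposition 3.1]{YANG11}: a geodesic $[o,g'o]$ that contained an $(\epsilon,f^m)$-barrier for large enough $m$ would force, by the barrier/contracting machinery of \textsection\ref{Section2} (in particular Proposition~\ref{admissible} applied to the periodic admissible path one builds from repeated barriers along $[o,g^{n}o]$), the orbit $\langle g\rangle o$ to be contracting, contradicting non-contraction of $g$. Hence every non-contracting $g$ is conjugate to an element of $\mathcal V_{\epsilon,f^m}$. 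Since $f^m$ is again contracting, Theorem~\ref{Conj2BarrierFree} applies: the set of elements conjugated into $\mathcal V_{\epsilon,f^m}$ is exponentially negligible, and therefore so is the set of non-contracting elements.

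For the ``moreover'' part I would make the same observation without the detour through non-contraction: the set of elements $g$ admitting a minimal representative $g'\in[g]$ with $[o,g'o]$ an $(\epsilon,f)$-barrier-free geodesic is by definition contained in $\{g\in G: [g]\cap \mathcal V_{\epsilon,M',f}\neq\emptyset\}$ for $M'=0$ (one can take $M'$ to be any fixed constant, e.g.\ the $M$ of the SCC definition, after enlarging $\epsilon$), i.e.\ it is exactly the set of elements conjugated into $\mathcal V_{\epsilon,f}$. Theorem~\ref{Conj2BarrierFree} gives exponential negligibility of this set verbatim. I should be a little careful matching the constant $M$ in $\mathcal V_{\epsilon,M,f}$ with the ``minimal representative'' formulation: for a minimal representative $d(o,g'o)=\len_o[g']$ one has $\gamma_-=o\in B(o,M)$ and $\gamma_+=g'o\in B(g'o,M)$ trivially, so the minimal-representative condition is stronger than barrier-freeness with any $M\ge 0$, and the inclusion into the set of Theorem~\ref{Conj2BarrierFree} is immediate.

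The main obstacle, and the only place requiring genuine care rather than bookkeeping, is the implication ``$g$ non-contracting $\Rightarrow$ some minimal representative is $(\epsilon,f^m)$-barrier-free''. One has to rule out that $[o,g^n o]$ picks up a bounded number of barriers while still failing to be a quasi-geodesic asymptotically; the resolution is that finitely many barriers along a geodesic of unbounded length cannot prevent the periodic admissible path construction unless the barriers recur at linear density, and recurring barriers produce, via Proposition~\ref{admissible}, a genuine periodic $(g^k,D,\tau)$-admissible path, hence contraction of $g$. Since this is precisely \cite[Proposition 3.1]{YANG11}, which I am entitled to cite, the proof reduces to assembling these citations; I would write it in two short paragraphs, one per assertion, with the barrier/admissible-path step quoted rather than reproved.
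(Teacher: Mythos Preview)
Your proposal is correct and matches the paper's approach exactly: the paper states the lemma without a standalone proof, but the paragraph immediately preceding it outlines precisely your argument --- non-contracting elements have minimal representatives in $\mathcal V_{\epsilon,f^m}$ by \cite[Proposition 3.1]{YANG11}, and Theorem~\ref{Conj2BarrierFree} then gives exponential negligibility of anything conjugated into a barrier-free set. Your bookkeeping about the constant $M$ and your reduction of the ``moreover'' statement to a direct inclusion into the set of Theorem~\ref{Conj2BarrierFree} are both right and in the spirit of what the paper intends.
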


\subsection{Stable axis of contracting elements} 

One of the goals of this section is to show that  for a generic set of conjugacy classes, the stable length and algebraic length differ only by a bounded amount. For that purpose, we shall introduce a notion of stable axis in order to facilitate the computation of stable length. 

Given    a contracting element $g$, the group $E(g)$ is  the maximal elementary subgroup containing $g$ in $G$, and $\ax(g)=E(g)\cdot o$ is the \textit{(coarse) axis} of $g$ (depending on the basepoint $o\in \mathrm Y$).

We now introduce a {finer} notion of axis for a contracting element, which is a \textit{stable} subset of the coarse axis $\ax(g)$ in a sense   that it belongs to the negatively curved part of $\ax(g)$. For a pseudo-Anosov element     on Teichm\"uller space this is the (possibly disconnected) subsegment of its translation axis contained over a fixed compact part of the moduli space.  
\begin{defn}[Stable axis]\label{StabAxisDefn}
Let $g$ be a contracting element. Given $R>0$, a $\langle g\rangle$-invariant subset $\tax_R(g)\subset \mathrm Y$ is called a \textit{stable $R$-axis}, if for  any point $x\in \tax_R(g)$, the ball $B(x, R)$ intersects any bi-infinite  geodesic $\alpha$ {contained} in a finite neighborhood of $\ax(g)$,  and $d(x, gx)\ge 3R$. 
\end{defn}

The terminology of a stable axis is explained {by} the following lemma.
 
\begin{lem}\label{StableAxisLength}
Assume that a contracting element $g$ admits a  stable $R$-axis $\tax_R(g)$ for some $R>0$. Then for any $x\in \tax_R(g)$, we have  $|\tau[g]-d(x, gx)|\le 2R$.
\end{lem}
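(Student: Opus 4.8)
The plan is to prove the two bounds $\tau[g]\le d(x,gx)$ and $\tau[g]\ge d(x,gx)-2R$ separately, using that the stable length is independent of the basepoint, so that $\tau[g]=\tau(g)=\lim_{n\to\infty} d(x,g^nx)/n=\inf_{n\ge 1} d(x,g^nx)/n$ for the given $x\in\tax_R(g)$.

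The upper bound is immediate: since $g$ acts by isometries and $\langle g\rangle$ preserves $\tax_R(g)$, the triangle inequality gives $d(x,g^nx)\le\sum_{i=0}^{n-1}d(g^ix,g^{i+1}x)=n\,d(x,gx)$, hence $\tau[g]\le d(x,gx)$; in particular $\tau[g]-d(x,gx)\le 0\le 2R$.

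For the lower bound I would fix, via Lemma \ref{GeodesicNearAxis}, an oriented bi-infinite geodesic $\alpha\colon\mathbb R\to\mathrm Y$ lying in a finite neighbourhood of $\ax(g)$ and with $g$ preserving its orientation. By the definition of a stable axis, for each $i\ge 0$ the ball $B(g^ix,R)$ meets $\alpha$; pick $y_i\in\alpha\cap B(g^ix,R)$ and write $y_i=\alpha(t_i)$. Two facts drive the argument. First, $|t_{i+1}-t_i|=d(y_i,y_{i+1})\ge d(g^ix,g^{i+1}x)-2R=d(x,gx)-2R$, and since $d(x,gx)\ge 3R$ this is $\ge R>0$. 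Second --- the crucial point --- the parameters $t_i$ are monotone along $\alpha$. Granting this, $d(y_0,y_n)=|t_n-t_0|=\sum_{i=0}^{n-1}|t_{i+1}-t_i|\ge n\,(d(x,gx)-2R)$, so $d(x,g^nx)\ge d(y_0,y_n)-2R\ge n\,(d(x,gx)-2R)-2R$; dividing by $n$ and letting $n\to\infty$ gives $\tau[g]\ge d(x,gx)-2R$. Together with the upper bound this yields $|\tau[g]-d(x,gx)|\le 2R$.

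The main obstacle is the monotonicity of $\{t_i\}$: a priori the $y_i$ could oscillate back and forth along $\alpha$, in which case $d(y_0,y_n)$ need not grow linearly and the argument collapses. To establish it I would use that $g\alpha$ lies within finite Hausdorff distance of $\alpha$ (because $g\ax(g)=\ax(g)$) and, crucially, has the same orientation as $\alpha$. The nearest-point projection onto $\alpha$ of the fellow-travelling line $g\alpha$ is then coarsely $1$-Lipschitz, coarsely monotone increasing, and carries the positive ray of $g\alpha$ into a neighbourhood of the positive ray of $\alpha$. Since $gy_i$ lies on $g\alpha$ at parameter $t_i$ and within $2R$ of $y_{i+1}=\alpha(t_{i+1})$, this correspondence pins $t_{i+1}$ to within a bounded amount of the image of $t_i$; combined with the uniform gap $|t_{i+1}-t_i|\ge R$, all increments $t_{i+1}-t_i$ must then share a single sign, which is exactly the desired monotonicity (so that $|t_n-t_0|=\sum_i|t_{i+1}-t_i|$). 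One must be careful with the behaviour of this correspondence near its turning parameter and with the fact that $x$, though within $R$ of $\alpha$, need not lie over the positive ray of $\alpha$; but only the coarse monotonicity and the uniform positive gap enter, so these points are harmless.
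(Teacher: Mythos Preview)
Your proof is correct and follows essentially the same route as the paper's: project the orbit points $g^ix$ to nearby points $z_i$ on a fixed bi-infinite geodesic $\alpha$ lying near $\ax(g)$, argue that these projections are linearly ordered on $\alpha$, and read off $|\tau(g)-d(x,gx)|\le 2R$ from the resulting estimate on $d(z_0,z_n)$. The only differences are cosmetic: the paper treats the two inequalities at once via $|d(z_i,z_{i+1})-d(x,gx)|\le 2R$, whereas you split them, getting the (slightly sharper) upper bound $\tau(g)\le d(x,gx)$ for free from subadditivity; and where the paper just says the linear ordering follows ``by an elementary argument'' from $d(x,gx)\ge 3R$, you are more careful and invoke the orientation-preserving property of $g$ from Lemma~\ref{GeodesicNearAxis} to justify it. That extra care is warranted --- the gap condition $|t_{i+1}-t_i|\ge R$ alone does not preclude oscillation --- and your sketch via the coarse monotone correspondence between $\alpha$ and $g\alpha$ is sound.
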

\begin{proof}
Choose a reference point $x_0\in \tax_R(g)$ and a geodesic $\alpha$ within a finite Hausdorff distance of $\ax(g)$. Since $\ax(g)$ is $\langle g\rangle$-invariant, we have that for each $i\in \mathbb Z$, $g^i \alpha$ stays in a finite neighborhood of $\ax(g)$. By definition of the stable axis, we obtain that $d(g^ix_0, \alpha)\le R$.  Denoting $x_i=g^ix_0$, there exists $z_i\in \alpha$ such that $d(x_i, z_i)\le R$ and so $|d(z_i, z_{i+1})-d(x_0, gx_0)|\le 2R$ for each $i$. Since $d(x_0, gx_0)\ge 3R$ and $g$ preserves the orientation of $\alpha$, we see that $z_i$ are linearly ordered on $\alpha$. This shows that $$|d(z_0, z_n)- n d(x_0, gx_0)|\le 2nR.$$  Since $\max\{d(z_0, x_0), d(z_n, g^nx_0)\}\le R$, we have
$$\tau(g)=\lim_{n\to \infty}\frac{d(g^n x_0, x_0)}{n} = \lim_{n\to \infty}\frac{d(z_n, z_0)}{n}.$$ 
This gives  $|\tau(g) - d(x_0, gx_0)|\le 2R,$ completing the proof.
\end{proof}



Let us now relate  the  stable axis to the combinatorial axis  of   a periodic admissible path (see Def. \ref{PeriAdmDef}).  

Fix a basepoint $o\in \mathrm Y$. In the next lemma,  we  consider a contracting element $f\in G$. Let  $\mathbb X=\{g\ax(f): g\in G\}$  the collection of $C$-contracting subsets with bounded intersection for a constant $C>0$ by \cite[Lemma 2.11]{YANG10}. 

\begin{lem}\label{LengthPeriAdmPath}
For any given $\tau>0$, there exist $D_0=D_0(f, \tau), R=R(f, \tau)>0$ such that the following holds.

Consider a contracting element $g\in G\setminus E(f)$ with a periodic $(g, D, \tau)$-admissible path $\gamma =\cup_{i\in \mathbb Z} (q_i p_i)$ for some $D>D_0$. Assume that the initial endpoint $(p_0)_-$ of $p_0$ is  the basepoint $o$. Then 
\begin{enumerate}
\item
Let $\mathbb A(g)\subset \mathbb X$ be the combinatorial axis of $\gamma$. The following union 
$$
\tax(g):=\cup  \{\proj_X(Y): X\ne Y\in \mathbb A(g)\}
$$
consists of a stable $R$-axis of the element $g$. 

\item
If $g=g_0^k$ for $k\ge 1$, then $$|\tau[g_0] - \ell_o[g_0]|\le R.$$ 
\end{enumerate}
\end{lem}
\begin{proof}
(1). Since $g^i\pi_X(Y)=\pi_{g^iX}(g^iY)$ and  $\mathbb A(g)$ is $\langle g\rangle$-invariant, we have  $\tax(g)$ is $\langle g\rangle$-invariant.

Consider   a bi-infinite geodesic $\alpha$  in a finite neighborhood of $\ax(g)$. If $\epsilon>0$ is the constant  given by Proposition \ref{admissible}, then any point $x\in \tax(g)$ is $\epsilon$-close to a point $(p_j)_-$ for some $j\in \mathbb Z$ which is in turn $\epsilon$-close  to $\alpha$. Setting $R=6\epsilon$, we have $d(x,\alpha) \le 2\epsilon <R$. 

By the definition of a periodic admissible path, we have $g(p_i)_-=(p_{i+1})_-$ for every $i\in \mathbb Z$. See a schematic picture (\ref{figure0}) of a periodic admissible path. We deduce from Proposition \ref{admissible} that $(p_0)_-$ is $\epsilon$-close to $[(q_0)_-, (p_0)_+]$, so by the periodicity we have  $$|d((p_i)_-,(p_{i+1})_-) -(\len(p_0)+\len(q_0))|\le 2\epsilon$$ 
for any $i\in \mathbb Z$. If $D>3R+4\epsilon$ is assumed,   then the above inequality with $\len(p_0)>D$ implies for  any $i\in \mathbb Z$:  $$d((p_i)_-,g(p_i)_-)\ge \len(p_0)-2\epsilon\ge 3R+2\epsilon.$$ Recalling $d(x, (p_j)_-)\le \epsilon$, we have $d(x, gx)\ge 3R$ and thus $\tax(g)$ is a stable $R$-axis. 

As a consequence, the assertion (2) for $k=1$ (i.e. $g=g_0$) follows. Indeed, by Lemma \ref{StableAxisLength}, we have $$|\tau[g]-d(x, gx)|\le 2R$$ for any $x\in \tax(g)$. By the discussion above, $x$ can be chosen to be $\epsilon$-close to the initial endpoint $o=(p_0)_-$ of $p_0$.  Since $\tau[g]\le \ell_o[g]\le d(o, go)$ always holds, we obtain 
\begin{equation}\label{k1EQ}
|\tau[g]-\ell_o[g]|\le 3R.    
\end{equation}

The full generality for $k\ge 2$ of (2) shall be proved below by exhibiting a stable axis for $g_0$ which contains $\mathcal A(g)$.

(2).  Any root of a contracting element  is contracting. Thus,   $g_0$ is contracting with the same maximal elementary group $E(g_0)=E(g)$ and the axis $\ax(g_0)=\ax(g)$. 

Observe that  $g_0^i \mathbb A(g)\cap g_0^j \mathbb A(g)=\emptyset$ for $0\le i\ne j\le k$. Indeed, note that $\mathbb A(g)=\{g^k\ax(f):k\in \mathbb Z\}$. Assume for the contrary that $g_0^{i-j} g^l\ax(f)=g^m\ax(f)$ for some $m\ne l\in \mathbb Z$.  Then $g_0^{i-j+k(l-m)}$  lies in $E(f)$ and  is   a non-trivial power of $g_0$ for $|i-j|\le k-1$.  Since $E(g_0)$ is the unique maximal elementary group containing $\langle g_0\rangle$, we obtain that $E(g)=E(g_0)=E(f)$. This however contradicts to the assumption that $g\notin E(f)$. 

Hence, $\mathbb B:=\cup_{i=0}^{k-1} g_0^i\mathbb A(g)$ is  a disjoint union of $k$ copies of $\mathbb A(g)$. By the bounded intersection, there exists $B>0$ such that for any two $X\ne X'\in \mathbb B$ we have 
\begin{equation}\label{BddIntersectionEQ}
\diam{N_C(X)\cap N_C(X')}\le B.
\end{equation}
Let $\alpha$ be a geodesic in finite neighborhood of $\ax(g_0)$. Here is the consequence of the above discussion.
\begin{claim}
If $D>2B+2\epsilon$ is assumed,   the intersections of   $\alpha$ with all $X\in \mathbb B$ appear  in a linear order so that any two distinct intersection have distance at least $D-B-2\epsilon$.
\end{claim}
\begin{proof}[Proof of the claim]
Note that $\ax(g_0)=\ax(g)$ is $\langle g_0\rangle$-invariant so  $g_0^{-i}\alpha$ lies in a finite neighborhood of $\ax(g)$ for any $i\in \mathbb Z$.  We now apply Proposition \ref{admissible}.(3) to get 
$\diam{N_C(X)\cap g_0^{-i}\alpha}\ge D-2\epsilon$
for every $X\in \mathbb A(g)$. Thus, for any $X\in \mathbb B$, we have 
\begin{equation}\label{IntersectAlphaEQ}
\diam{N_C(X)\cap \alpha}\ge D-2\epsilon    
\end{equation}
The claim thus follows by combining (\ref{IntersectAlphaEQ}) and (\ref{BddIntersectionEQ}).
\end{proof}

We are ready to show  that  the following $\langle g_0\rangle $-invariant set  $$\tax(g_0):= \cup_{i=0}^{k-1} g_0^i\tax(g)$$    as the union of $k$ copies of $\tax(g)$ is a stable $R$-axis of the element $g_0$.  We noted above that  if $\alpha$ lies in a finite neighborhood of $\ax(g_0)=\ax(g)$,  then   $\alpha$ intersects        the $R$-neighborhood of   $g_0^i\tax(g)$ for each $i$, and of thus $\tax(g_0)$.  It thus remains to show $d(x, g_0x)\ge 3R$ for each $x\in\tax(g_0)$. 

For concreteness, assume that  $ g_0^{-i} x\in\tax(g)$ for some $0\le i\le k-1$. Set $\beta:=g_0^{-i} \alpha$. Then  $\beta$   lies in a finite neighborhood of $\ax(g)$. By Proposition \ref{admissible}, there exists an endpoint  of $p_j$ denoted by $x_0\in X_j$ such that   $d(g_0^{-i}x, x_0)\le \epsilon$. On the other hand, $X_j\in \mathbb A(g)$ and $g_0X_j\in \mathbb B$ so the claim above implies   $d(x_0, g_0 x_0)\ge D-2\epsilon-B$.  Thus, if $D\ge 3R+B+4\epsilon$ is assumed, then $$ d(x, g_0x)\ge d(x_0, g_0x_0) -2\epsilon \ge 3R.$$

Since $\mathcal A(g)$ is included in $\mathcal A(g_0)$ by definition, the inequality (\ref{k1EQ}) follows similarly for $g_0$. Resetting $R:=3R$ finishes the  proof of (2). 
\end{proof}

\subsection{Stable length of   generic contracting elements} 
The main result of this subsection is that a generic contracting element admits a periodic admissible path, so its stable length could be estimated by Lemma \ref{LengthPeriAdmPath}.   

\begin{prop}\label{StablePointedLength}
Fix a basepoint $o\in \mathrm Y$,  and two independent contracting elements $f_1, f_2\in G$ such that $d(o, f_io)\gg 0$. There exists a constant $R=R(o, f_1, f_2)>0$ with the following property.

Let $g\in G$ be a  {minimal} conjugacy representative in $[g]$ so that $[o, go]$ contains at least one $(\epsilon, f_1)$-barrier and one $(\epsilon, f_2)$-barrier. 
Then $$|\tau[g]-d(o, go)|\le R.$$ 
Moreover, for any $g_0^k=g$ with $k\in \mathbb Z$, we have $|\tau[g_0]-d(o, g_0o)|\le R$.
\end{prop}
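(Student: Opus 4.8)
The plan is to manufacture, for such a $g$, a periodic admissible path whose combinatorial axis sits on the orbit $Go$, and then to play the stable–axis estimates of Lemmas~\ref{StableAxisLength} and~\ref{LengthPeriAdmPath} against the minimality of $g$ in $[g]$. I would work with the contracting system $\mathbb X=\{h\ax(f_1),\,h\ax(f_2):h\in G\}$, which has bounded intersection because $f_1,f_2$ are independent; let $C,\mathcal R$ be the corresponding data and $D,\epsilon$ the constants of Proposition~\ref{admissible}, all independent of $g$. First I would observe that $g$ is conjugate into at most one of $E(f_1),E(f_2)$: if $hgh^{-1}\in E(f_j)$ then, $g$ (hence $hgh^{-1}$) being contracting, maximality of the elementary subgroups gives $E(f_j)=hE(g)h^{-1}$, so conjugacy into both would force $E(f_1)$ and $E(f_2)$ to be conjugate, contradicting independence. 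Fix $j$ with $g$ not conjugate into $E(f_j)$ and let $X_0=t\ax(f_j)$ be the given $(\epsilon,f_j)$-barrier on $\alpha:=[o,go]$, which I may assume to be proper (the non-proper case is a routine variant). Since $d(to,\alpha),d(tf_jo,\alpha)\le\epsilon$ and $d(o,f_jo)\gg 0$, the entry-to-exit subpath of $\alpha$ in $N_\epsilon(X_0)$ has length at least $d(o,f_jo)-4\epsilon\ge D$; replacing it by a geodesic $p_0$ with endpoints in $X_0$ and propagating by $\langle g\rangle$ produces a periodic $(g,D,\tau)$-admissible path $\gamma=\bigcup_{i\in\mathbb Z}g^i(q_0p_0)$ with $X_i:=g^iX_0$ — the $X_i$ are pairwise distinct as $g$ is not conjugate into $E(f_j)$, $\len(p_0)\ge D$, and the $q_i$ have $\tau$-bounded projections to the adjacent $X$'s by the contracting property and bounded intersection of distinct members of $\mathbb X$.

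Next, Lemma~\ref{LengthPeriAdmPath}(1) gives a stable $R$-axis $\tax_R(g)=\bigcup_{X\ne Y\in\mathbb A(g)}\proj_X(Y)$ with $R$ uniform, and since every $X\in\mathbb A(g)$ is a translate $g^itE(f_j)o\subseteq Go$, we have $\tax_R(g)\subseteq Go$. By Lemma~\ref{StableAxisLength}, $d(x,gx)\le\tau[g]+2R$ for all $x\in\tax_R(g)$. On the other hand, minimality of $g$ in $[g]$ means $d(o,go)=\ell_o[g]=\inf_{p\in Go}d(p,gp)$, so $d(o,go)\le\min_{x\in\tax_R(g)}d(x,gx)\le\tau[g]+2R$; together with $\tau[g]\le\ell_o[g]=d(o,go)$ this yields $|\tau[g]-d(o,go)|\le 2R$, which is the first assertion with $B=2R$.

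For the ``moreover'' clause let $g_0^k=g$; I may assume $k\ge 2$, since for $|k|\le 1$ the statement is just a reformulation of the first part (and $g_0\leftrightarrow g_0^{-1}$ changes neither side). Then $g_0$ is contracting with $\ax(g_0)=\ax(g)$ and $E(g_0)=E(g)$. Fix a bi-infinite geodesic $\beta$ within finite Hausdorff distance of $\ax(g)$; since the path $\gamma$ above is a contracting quasi-geodesic with \emph{uniform} constants and $\beta$ lies at uniformly bounded Hausdorff distance from $\gamma$, the geodesic $\alpha=[o,go]$ lies in a uniform neighbourhood $N_H(\beta)$, so $d(o,\beta)\le H$. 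Because $\beta$ is Morse with uniform constants and $g_0$ preserves its orientation (Corollary~\ref{EPlush}), $g_0$ acts on $\beta$ coarsely as a translation by $\asymp\tau[g_0]$; subdividing $\alpha=[o,g_0^ko]$ into $k$ equal subarcs and identifying the $j$-th subdivision point with $g_0^jo$ up to a uniform error — via the fellow-travelling of $\alpha$ with a segment of $\beta$ and of $\beta$ with $\langle g_0\rangle o$ — gives $d(o,g_0o)\le\tfrac1k\len(\alpha)+O(1)=\tfrac1k d(o,go)+O(1)\le\tfrac1k(\tau[g]+2R)+O(1)=\tau[g_0]+O(1)$. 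Since $d(o,g_0o)\ge\ell_o[g_0]\ge\tau[g_0]$, this gives the claim after enlarging $B$ by the uniform additive error.

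The hard part is precisely this ``moreover'' clause: a root $g_0$ need not be minimal in its conjugacy class, so $d(o,g_0o)$ is not controlled by any pointed length, and one must instead verify that $o$ genuinely lies within a uniformly bounded distance of an axis of $g_0$ and that $g_0$ displaces points on a nearby geodesic by essentially its stable length. Both facts hinge on the uniformity of the contraction constants supplied by the periodic admissible path, and on the standard but slightly delicate coarse equivariance of nearest-point projections onto Morse subsets.
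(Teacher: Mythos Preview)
Your construction of the periodic admissible path has a genuine gap at the step where you assert that ``the $q_i$ have $\tau$-bounded projections to the adjacent $X$'s by the contracting property and bounded intersection of distinct members of $\mathbb X$.'' Neither of these two ingredients yields that conclusion. Concretely, $q_0$ is a geodesic from (approximately) $g^{-1}y\in N_\epsilon(X_{-1})$ to $x\in N_\epsilon(X_0)$, where $x,y$ are the entry and exit points of $\alpha=[o,go]$ in $N_\epsilon(X_0)$. Bounded intersection tells you only that $N_\epsilon(X_{-1})\cap N_\epsilon(X_0)$ has bounded diameter; it says nothing about \emph{where} on $X_0$ that intersection sits. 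If $g^{-1}y$ happens to be near $X_0$ at a point far from $x$ --- equivalently, if $g[o,x]_\alpha$ re-enters $N_\epsilon(X_0)$ --- then $q_0$ will fellow-travel $X_0$ over a long segment and $\proj_{X_0}(q_0)$ will be large. This is exactly the obstruction the paper isolates as the inequalities $\diam{N_\epsilon(\mathbf b)\cap g[o,x]_\alpha}<R$ and $\diam{N_\epsilon(\mathbf b)\cap g^{-1}[y,go]_\alpha}<R$, and proving them is the substance of the argument.

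The paper's remedy, which your argument never invokes, is to use minimality of $g$ \emph{in the construction of the admissible path}, not merely at the end. If the bounded-projection condition fails for the barrier $\mathbf b$, one shows from $d(y,gy)\ge d(o,go)-2\epsilon$ (minimality) that the exit point $y$ is forced to lie within $4\epsilon$ of $go$; symmetrically the entry point would be forced near $o$. But then the \emph{second}, independent barrier $\tilde{\mathbf b}$ --- having bounded intersection with $\mathbf b$ --- must have its entry and exit points well inside $\alpha$, and the bounded-projection condition holds for $\tilde{\mathbf b}$. This is why the hypothesis demands barriers for \emph{both} $f_1$ and $f_2$: one of them might sit degenerately at an endpoint of $\alpha$, but not both. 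Your choice of a single barrier $X_0=t\ax(f_j)$ (selected only so that the $g^iX_0$ are pairwise distinct) discards precisely the information needed to rule out the degenerate configuration; the parenthetical ``the non-proper case is a routine variant'' is in fact the heart of the difficulty.

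Your concluding step --- observing that $\tax_R(g)\subseteq Go$ and combining Lemma~\ref{StableAxisLength} with $d(o,go)=\min_{p\in Go}d(p,gp)$ --- is a clean way to finish and is essentially equivalent to the paper's use of Lemma~\ref{LengthPeriAdmPath}(2). But it only kicks in once you actually have a periodic $(g,D,\tau)$-admissible path, and that is where the argument currently fails.
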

\begin{proof}
Assume that there exists an $(\epsilon ,f_i)$-barrier $t_i$ for $\alpha=[o, go]$ where $i=1, 2$. Let $\mathbf b_i= t_i\cdot\ax(f_i)$. Since $f_1, f_2$ are independent, we have that $\mathbf b_1$ and $\mathbf b_2$ have bounded intersection: for given $\epsilon>0$ there exists $B=B(\epsilon)>0$ such that $$\diam{N_\epsilon(\mathbf b_1)\cap N_\epsilon(\mathbf b_2)}\le B.$$ 
The proof is based on the following.
\begin{claim}
There exists $\mathbf b\in \{\mathbf b_1, \mathbf b_2\}$ with the following property:
 
Denote by $x, y$ the entry and exit points of the geodesic $\alpha$ in $N_\epsilon(\mathbf b)$. We have
\begin{equation}\label{MinDistEQ}
d(x, y)+d(y, gx)\ge d(o, go)-2\epsilon,
\end{equation}
\begin{equation}\label{ShortRightIntersectEQ}
\diam{N_\epsilon(\mathbf b)\cap g[o, x]_\alpha}\le B+4\epsilon,
\end{equation}
\begin{equation}\label{ShortLeftIntersectEQ}
\diam{N_\epsilon(\mathbf b)\cap g^{-1}[y, go]_\alpha}\le B+4\epsilon.
\end{equation}
\end{claim}

We will first complete the proof of the proposition assuming the \textbf{Claim}. 
To that end, we construct the concatenated path as follows: 
\begin{equation}\label{PeriodicAdmEQ}
\gamma=\cup_{i\in \mathbb Z} g^i([x, y]_\alpha \cdot [y, gx]),
\end{equation}
where $[x, y]_\alpha$ is the subsegment of $\alpha$ between $x$ and $y$.

Denote $D=\max\{d(o, f_io): i=1, 2\}-2\epsilon>0$ and $\sigma=9\epsilon+B$. We now verify that   $\gamma$ is a periodic $(g, D, \sigma)$-admissible path with associated contracting subsets $\mathbb A(\gamma)=\{g^i N_\epsilon(\mathbf b): i\in \mathbb Z\}$ of bounded intersection. 

Since $\mathbb X:=\{g\ax(f_1), g\ax(f_2): g\in G\}$ has bounded intersection and $d(x, y)\ge D$, according to Definition \ref{PeriAdmDef} of admissible paths, it thus remains to verify the following condition (\textbf{BP}):  
\begin{center}
$[y, gx]$ has $\sigma$-bounded projection to $N_\epsilon(\mathbf b)$ and $N_\epsilon(g\mathbf b)$. 
\end{center}
We only prove it for $N_\epsilon(\mathbf b)$; the   case for $N_\epsilon(g\mathbf b)$ is symmetric. 

Note that $y$ is the exit point of $\alpha$ in $N_\epsilon(\mathbf b)$ and since $\epsilon>C$, we have $[y, go]\cap N_C(\mathbf b)=\emptyset$.  The contracting property then implies $d_{\mathbf b}([y, go])\le C$. By \cite[Lemma 6.1]{YANG11}, we obtain that $d_{\mathbf b}(g[o, x]_\alpha) \le \diam{N_\epsilon(\mathbf b)\cap g[o, x]_\alpha}+4C$. By  (\ref{ShortRightIntersectEQ}) from the \textbf{Claim},
$$
d_{\mathbf b}([y, gx]) \le d_{\mathbf b}([y, go]) +d_{\mathbf b}(g[o, x]_\alpha)\le 5C+B+4\epsilon \le \sigma.
$$

Hence,  $\gamma$ is a periodic $(g, D, \sigma)$-admissible path. Choose $d(o, f_io)\gg 0$ for $i=1, 2$ such that the constant $D>D_0$ satisfies Proposition \ref{admissible}.  By Lemma \ref{LengthPeriAdmPath},  there exists $R>0$ such that $$|\tau[g]-d(x, y)-d(y,gx)|\le R.$$ With (\ref{MinDistEQ}), the   estimates on stable length of $g$ follows and the proposition is proved.
 
\begin{figure}[htb] 
\centering \scalebox{0.6}{
\includegraphics{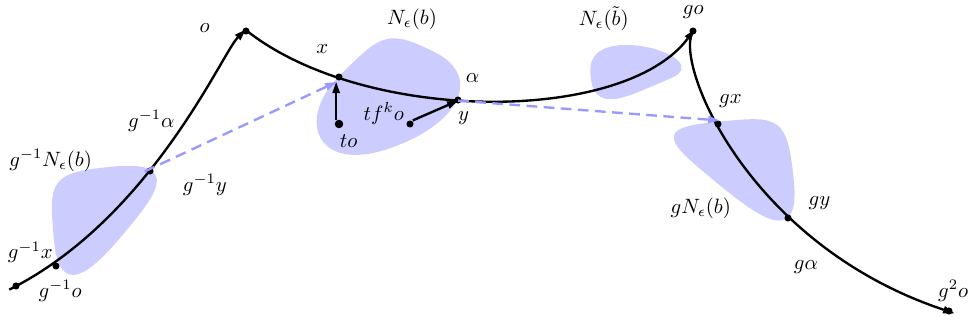} 
} \caption{Capping the angles by  dotted geodesics gives the periodic admissible path $\gamma$} \label{figure1}
\end{figure}

Therefore, it remains to prove the above \textbf{Claim}. 
\begin{proof}[Proof of the Claim]

We shall first prove  (\ref{MinDistEQ}) for any  $\mathbf b\in \{\mathbf b_1, \mathbf b_2\}$. Denote by $x, y$ the corresponding entry and exit points of the geodesic $\alpha$ in $N_\epsilon(\mathbf b)$.  Since $\mathbf {b}= t\ax(f)$ for some $f\in \{f_1, f_2\}$ and $t\in G$,  there exists  $0\ne k\in \mathbb Z$ such that $d(y, tf^ko)\le \epsilon$.

Let us look at the triangle $\Delta(y, gx, go)$. The minimality of $g$ in $[g]$ implies the second inequality:
\begin{equation}\label{dygyEQ}
d(y, gy)\ge d(tf^ko, gtf^ko)-2\epsilon \ge d(o, go)-2\epsilon.
\end{equation}
Similarly, we have $d(x, gx)\ge d(o, go)-2\epsilon.$

Note that $d(o, go)= d(o, x)+d(x, y)+d(y, go)$ and $d(y, gy) \le d(y, gx) +d(gx, gy).$ From (\ref{dygyEQ}), we  infer that 
\begin{equation}\label{reverseTIQ}
d(y, go)+d(go, gx)\le d(y, gx)+2\epsilon 
\end{equation}
Together $d(o, go)=d(o, x)+d(x,y)+d(y, go)$, we obtain (\ref{MinDistEQ}) for any $\mathbf b\in \{\mathbf b_1, \mathbf b_2\}$.

We are next going to prove (\ref{ShortRightIntersectEQ}) only; (\ref{ShortLeftIntersectEQ}) is similar.  By way of contradiction,  assume that for any $\mathbf b\in \{\mathbf b_1, \mathbf b_2\}$ the following holds:
\begin{equation}\label{LongIntersectEQ}
\diam{N_\epsilon(\mathbf b)\cap g[o, x]_\alpha}> B+6\epsilon 
\end{equation}

By assumption,  $\alpha=[o, go]$ contains at least two distinct barriers. Up to exchanging notations, we choose   $\mathbf b \neq \mathbf {\tilde b} \in \{\mathbf b_1,\mathbf b_2\}$ such that $\mathbf {\tilde b}$ is on the right side of $\mathbf { b}$ (See Figure \ref{figure1}).  As above, let $x,y$ be the entry and exit points of $\alpha$ in $N_\epsilon(\mathbf b)$. We shall need the following estimates to arrive at  a contradiction:
 \begin{equation}\label{dygoEQ}
d(y, go)\le  4\epsilon.
\end{equation}

Indeed,    let $w$ be the entry point of $[go, gx]$ in $N_\epsilon(\mathbf b)$. Since    $\epsilon>C$ is assumed,    the contracting property implies $d(y, w)\le d_{\mathbf b}([y, go]_\alpha)+d_{\mathbf b}([go, w]_{g\alpha})\le 2C$.  Hence, for $w\in [go, gx],$ we have $$d(y, gx)\le d(w, gx)+d(y, w) \le d(w, gx)+2C\le d(go,gx)+2C.$$ By (\ref{reverseTIQ}), we then obtain that  $$d(y, go)\le d(y, gx)-d(go, gx)+2\epsilon \le 2C+2\epsilon\le 4\epsilon.$$ 
The inequality in (\ref{dygoEQ}) is then proved.

Recall that $\mathbf {\tilde b}$ is chosen on the right side of $\mathbf {b}$. By $B$-bounded intersection  of  $N_\epsilon(\mathbf {\tilde b})$ and $N_\epsilon(\mathbf b)$, we deduce from  (\ref{LongIntersectEQ}) that the corresponding exit  points of   $\alpha$ in $N_\epsilon(\mathbf b)$ and $N_\epsilon(\mathbf {\tilde b})$ have distance strictly greater than $4\epsilon$. Hence, $d(y, go)>4\epsilon$     contradicts (\ref{dygoEQ}). 

Therefore, there exists $\mathbf b  \in \{\mathbf b_1,\mathbf b_2\}$ such that  (\ref{ShortRightIntersectEQ}) and (\ref{ShortLeftIntersectEQ}) are true. The \textbf{Claim} is proved.
\end{proof}
The proof of the proposition is complete. 
\end{proof}

We now record the main consequence of Proposition \ref{StablePointedLength}. 

\begin{cor}\label{GenericSetTwoLengths}
 
There exist a number $B>0$ and an exponentially generic set $\mathcal G$ of elements such that for each $g\in \mathcal G$,   $0\le \len_o[g]-\tau[g]\le B$. Moreover, for any $g_0^k=g$ with $k\in \mathbb Z$, we have $|\tau[g_0]-\len_o[g_0]|\le B$.
\end{cor}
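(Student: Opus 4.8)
The plan is to derive Corollary~\ref{GenericSetTwoLengths} from Proposition~\ref{StablePointedLength} by producing an exponentially generic set of elements whose minimal conjugacy representatives satisfy the hypothesis of that proposition, namely that $[o,go]$ contains both an $(\epsilon, f_1)$-barrier and an $(\epsilon, f_2)$-barrier for two fixed independent contracting elements $f_1, f_2$ with $d(o, f_i o)\gg 0$. First I would fix such $f_1, f_2$: independent contracting elements exist because $G$ is non-elementary with a contracting element (one can take suitable conjugates or powers of a single contracting element $f$, e.g.\ $f_1=f$ and $f_2=hfh^{-1}$ for an appropriate $h$, which are independent provided $E(f)$ and $hE(f)h^{-1}$ are not conjugate). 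I would then arrange $d(o,f_i o)$ to be as large as the proof of Proposition~\ref{StablePointedLength} requires; this is harmless since one may replace $f_i$ by a high power.

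Next I would assemble the generic set. By Lemma~\ref{TightConjugacyBarrier} (equivalently Theorem~\ref{Conj2BarrierFree}), for each $i$ the set of elements admitting a minimal $(\epsilon, f_i)$-barrier-free representative in their conjugacy class is exponentially negligible. Hence the set $\mathcal G$ of $g\in G$ such that \emph{every} minimal representative of $[g]$ has a geodesic $[o,go]$ containing an $(\epsilon,f_1)$-barrier \emph{and} an $(\epsilon,f_2)$-barrier is the complement of a union of two exponentially negligible sets, so $\mathcal G$ is exponentially generic. (One should be a little careful about the quantifier over minimal representatives versus ``some'' minimal representative; since the property we need is just the existence of barriers on the geodesic from $o$ to $go$ for a minimal $g$, and conjugation permutes minimal representatives, it suffices to work with the set of $g$ that are themselves minimal in $[g]$ and whose $[o,go]$ carries both barriers — the non-minimal elements contribute nothing new to the count of conjugacy classes, and the negligibility statements are about conjugacy-closed sets.) For $g\in\mathcal G$, Proposition~\ref{StablePointedLength} gives a uniform $B=B(o,f_i,\theta_i)>0$ with $|\tau[g]-d(o,go)|\le B$; since $g$ is minimal, $\len_o[g]=d(o,go)$, and since always $\tau[g]\le \len_o[g]$, we get $0\le \len_o[g]-\tau[g]\le B$. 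The ``moreover'' clause about powers $g_0^k=g$ is exactly the ``moreover'' clause of Proposition~\ref{StablePointedLength}, once one notes $|\tau[g_0]-\len_o[g_0]|\le |\tau[g_0]-d(o,g_0 o)|+|d(o,g_0o)-\len_o[g_0]|$ and that $d(o,g_0 o)\ge \len_o[g_0]\ge \tau[g_0]\ge d(o,g_0o)-B$ forces $|d(o,g_0o)-\len_o[g_0]|\le B$ as well; absorbing constants gives a single uniform bound (which I would just call $B$ after enlarging).

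The main obstacle I anticipate is purely bookkeeping rather than conceptual: making sure the genericity is stated for a \emph{conjugacy-invariant} set and that ``minimal representative'' is handled cleanly, so that the passage from ``$g$ is minimal and $[o,go]$ has the two barriers'' to ``the conjugacy class $[g]$ is counted'' loses nothing. Concretely, one wants: the set $\widetilde{\mathcal G}$ of $g\in G$ for which there exists a minimal representative $g'\in[g]$ with $[o,g'o]$ containing an $(\epsilon,f_1)$- and an $(\epsilon,f_2)$-barrier is exponentially generic — this follows because its complement is contained in the union over $i=1,2$ of the (conjugacy-closed, exponentially negligible) sets from Lemma~\ref{TightConjugacyBarrier}. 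Then $\mathcal G:=\widetilde{\mathcal G}$ (or its subset of minimal elements, which has the same conjugacy classes) is the desired set, and every $g\in\mathcal G$ falls under Proposition~\ref{StablePointedLength}. No new geometry is needed beyond what Proposition~\ref{StablePointedLength} and Lemma~\ref{TightConjugacyBarrier} already supply.
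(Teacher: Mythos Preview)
Your proposal is correct and follows essentially the same approach as the paper: the paper's proof is a single sentence invoking Theorem~\ref{Conj2BarrierFree} to certify exponential genericity of the hypothesis set in Proposition~\ref{StablePointedLength}, which is exactly what you do (via the equivalent Lemma~\ref{TightConjugacyBarrier}). Your additional bookkeeping about minimal representatives and the passage from $d(o,g_0o)$ to $\len_o[g_0]$ in the ``moreover'' clause is care the paper omits but which is implicitly needed.
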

\begin{proof}
Only the exponential genericity of elements   in Proposition \ref{StablePointedLength} needs to be verified, which follows from Theorem \ref{Conj2BarrierFree}.
\end{proof}
\begin{rem}
Theorem \ref{Conj2BarrierFree} is only invoked here in this section,  and the further results (Lemma \ref{fractalbarrier}, Theorem \ref{LinearDrift} below) about genericity do not rely on it.
\end{rem}

\subsection{Growth tightness of fractional barrier-free elements}\label{SSfracbarrier}
The goal of the remaining subsections (independent of the proof of  \ref{mainthm})  is Theorem \ref{LinearDrift}  which shall be used in Theorem \ref{Teichprincipal} about generic Teichm\"{u}ller geodesics.      

Let $\overrightarrow \theta(n)=(\theta_1(n), \theta_2(n)): \mathbb R_{\ge 0} \to [0, 1]\times [0,1]$ be a  function such that $\theta_1(n)\le  \theta_2(n).$
A \textit{$\overrightarrow \theta$-interval} of a geodesic {segment} $\alpha$, denoted by $\alpha_{\overrightarrow \theta}$,  is the closed subsegment so that the initial endpoint  has a   distance $\theta_1(n)  n$ to $\alpha_-$ and the terminal endpoint has a distance $\theta_2(n)  n$ to $\alpha_+$ where $n:=\len(\alpha)$.   In most cases, we choose a constant function $\overrightarrow\theta(n)=(\theta_1, \theta_2)$ for $0\le \theta_1<\theta_2\le 1$.

\begin{lem}\label{fractalbarrier}
Assume that the action $G \curvearrowright\mathrm Y$ is SCC. Let   $\overrightarrow \theta(n):=(\theta_1(n), \theta_2(n)): \mathbb R_{\ge 0} \to [0, 1]^2$ be a   function such that $\theta(n)=\theta_2(n)-  \theta_1(n)\ge n^{-a}$ for some $a\in (0, 1)$.

{Let $f$ be a contracting element.} Then the set of elements   $g\in G$ for which the interval  $[o,go]_{\overrightarrow \theta}$ does not contain an $(\epsilon, f)$-barrier is negligible.  

Moreover, when $\displaystyle \liminf_{n\ge 1}\theta(n)>0$, the above set is exponentially negligible. 
\end{lem}

\begin{rem}
The    SCC assumption on actions is crucial to obtain the growth tightness in the ``moreover'' statement. 
\end{rem}

In its  proof, we frequently use the following criterion {for a set to be negligible}.
\begin{lem}\label{ThreeDecompTight}
Assume that the action $G \curvearrowright\mathrm Y$ has purely exponential growth. Let $D>0$   and $\theta : \mathbb R_{\ge 0} \to (0, 1]$ such that $\theta(n)\ge  n^{-a}$ for some $1>a>0$.
For    a growth tight subset $Z\subset G$, the set of elements  $g\in G$     satisfying   the following two properties:
\begin{enumerate}
\item
$g=g_1g_2g_3$ can be written as a product of three elements  such that  $|d(o, go)-\sum_{i=1,2,3} d(o, g_io)|\le D$ and 
\item
one of the three $g_i$'s  belongs to $Z$ and has length bigger than $\theta(d(o, go))   \cdot d(o, go)$,
\end{enumerate} 
is negligible. Moreover, when $\displaystyle \liminf_{n\ge 1}\theta(n)>0$, the above set is exponentially negligible. 
\end{lem}
\begin{proof}
Let us first consider the set of elements $g=g_1g_2g_3$, where $g_2$ satisfies the second property. Denote $n:=d(o, go)$ and $\theta_n:=\theta(n)\ge n^{-a}$ for some $a\in (0,1)$.  The number of these elements is upper bounded by 
\begin{equation}\label{SumEQ}
\begin{array}{rl}
 \sum\limits_{\substack{0\le k+l\le n+D  \\ n\ge l\ge n\theta_n  }} \sharp N(o, k)\cdot \sharp \big (N(o, l) \cap Z\big)\cdot \sharp N(o, n+D-k-l).
\end{array}
\end{equation}

Since the action has PEG, we have $$\sharp N(o, i) \asymp \exp(\e G i)$$ for $i\ge 0$. Since  the set $Z$ is growth tight, there exists $0<\omega_1<\e G$ such that $$\sharp(N(o, l)\cap Z)\prec \exp(\omega_1 l).$$ 
For  $\epsilon:=\e G-\omega_1>0$, each summand in (\ref{SumEQ}) takes proportion of $N(o, n)$ at most $\prec \exp(-\epsilon \cdot  n\theta_n  ) \prec \exp(- {\epsilon}{n^{1-a}} )$. Since there are at most $n^2$ summands and $n^2 \exp(-\epsilon n^{1-a})\to 0$, we obtain that  the set of elements $g$ with this property is negligible.   When $g_1$ or $g_3$ satisfies the second property, an even simpler proof shows that the {corresponding sets} are negligible as well. 

If $\theta_n$ is uniformly away from $0$, then the above computation shows that the set under consideration is exponentially negligible. Thus the result is proved.
\end{proof}

We are ready to prove Lemma \ref{fractalbarrier}. 
\begin{proof}[Proof of Lemma \ref{fractalbarrier}]
Denote $\beta=\alpha_{\overrightarrow \theta}$ the $\overrightarrow \theta$-interval of $\alpha:=[o, go]$, and  for simplicity,   $\theta_2=\theta_2(n), \theta_1=\theta_1(n)$ where $n=d(o, go)$. Let $g\in G$ be an element so that $\beta$ does not contain an   $(\epsilon, f)$-barrier.    The proof proceeds by showing that $g$ belongs to a finite union of negligible sets.  
 
 First of all, we can assume that the entire geodesic $\alpha$ contains an $(\epsilon, f)$-barrier. Otherwise, $g$ belongs to the barrier-free set $\mathcal V_{\epsilon, f}$, which is exponentially negligible by Theorem \ref{GrowthTightThm}. 

Let $t\in G$ be any $(\epsilon, f)$-barrier such that $d(to, \alpha), d(tfo, \alpha)\le \epsilon.$ Let $x, y\in \alpha$ such that $d(to, x), d(tfo, y)\le \epsilon$.

\begin{claim}
The segment $[x,y]_\alpha$ intersects  $\beta$ in a diameter at most $\len(\beta)/3$.
\end{claim}
\begin{proof}[Proof of the Claim]
Note $$
d(o,go) +4\epsilon \ge  d(o, to) +d(to, tfo)+d(tfo, go).
$$
Consider the growth tight set $Z:=E(f)$,     $D:=4\epsilon$ and $\theta:=(\theta_2-\theta_1)/3$. If $\len([x,y]_\alpha\cap \beta)\ge \len(\beta)/3$, then the element $g=t\cdot f \cdot (tf)^{-1} g$ can be written as a product of three elements satisfying the properties  of Lemma \ref{ThreeDecompTight}. Hence, the set of  these elements $g$ is negligible. So we can assume that the subpath $[x, y]_\alpha$ intersects  $\beta$ in a diameter at most $\len(\beta)/3$. 
\end{proof}

 
By the Claim,  $[x, y]_\alpha$ does not contain the middle point of $\beta$, so any barrier of $\alpha$ stays either on the left side or on the right side of $\beta$.

If one side, say the left side,  of $\beta$ does not contain an $(\epsilon, f)$-barrier, then the right side of $\beta$ must contain at least one $(\epsilon, f)$-barrier since $\alpha$ contains one.  Consider the left-most $(\epsilon, f)$-barrier $t_2\in G$ with the entry point $w$ of $\alpha$ in $N_\epsilon(t_2 \ax(f))$. This implies that $[o, w]_\alpha$ is $(\epsilon, f)$-barrier-free and  $\len([o, w]_\alpha)\ge 2/3\cdot  \len(\beta)\ge 2n\theta$. Recalling that $\ax(f)=E(f)o$, we choose an element $\tilde t_2\in t_2E(f)$ such that $d(w, \tilde t_2 o)\le \epsilon$. We then write $g=\tilde t_2\cdot (\tilde t_2^{-1}g)$ as a product of two elements with $\tilde t_2$ being $(\epsilon, f)$-barrier-free. Apply Lemma \ref{ThreeDecompTight}  with   the growth right $Z:=\mathcal V_{\epsilon, f}$, $D=2\epsilon$ and $2\theta$. We thus see that, in this case, $g$ belongs to a negligible set as well.
 
We now turn to the case that  each side of $\beta$ contains an $(\epsilon, f)$-barrier. Let $t_1\in G$ be the right-most barrier on the left side of $\beta$. Thus,  if we denote by $z\in \alpha$   the exit point in $N_\epsilon(t_1\cdot \ax(f))$, we have $d(z, \tilde t_1o)\le \epsilon$ for some $\tilde t_1\in t_1 E(f)$. Similarly, for the left-most barrier $t_2$ on the right side of $\beta$, we have $d(w, \tilde t_2o)\le \epsilon$ for   the entry point $w\in \alpha$ in $N(t_2\cdot \ax(f))$ and for some $\tilde t_2\in t_2 E(f)$. 

Recall that each barrier intersects $\beta$ in a segment of length less than $\len(\beta)/3$, and $\beta$  contains no   barrier by assumption. We conclude that $[z, w]_\alpha$ is $(\epsilon, f)$-barrier-free, and therefore so is the element $t_1^{-1} t_2$.  Noting that  $\len([z, w]_\alpha)\ge \len(\beta)/3\ge n\theta$ and writing $$g=\tilde t_1\cdot (\tilde t_1^{-1}\tilde t_2)\cdot (\tilde t_2^{-1}\tilde t_1 g)$$ as a product of three elements, {the set of} such elements $g$ is negligible by using Lemma \ref{ThreeDecompTight} again.

In summary, assuming that $\beta$  does not contain an   $(\epsilon, f)$-barrier, we obtain that the set of these elements $g$ satisfying (1) and (2) is negligible. 

When $\liminf_{n\ge 1}\theta(n)>0$, Lemma \ref{ThreeDecompTight}  shows the above sets are exponentially negligible. Hence, the ``moreover'' statement is proved along the same lines.
\end{proof}

\subsection{Generic elements with linear stable length}

In this subsection, we   prove that generic contracting elements  have their stable length   (at most) sub-linearly   close to the radius and their axis sublinearly near to the basepoint.

Similar to that of Proposition \ref{StablePointedLength},    the proof consists  in constructing periodic admissible paths but which is technically much simpler.  We emphasize that the arguments here do not use  Theorem \ref{Conj2BarrierFree}, so this gives a new proof of the main result of \cite{YANG11} that exponentially generic elements are contracting.

\begin{thm}\label{LinearDrift}
Assume that $G$ admits a SCC action with a contracting element on a proper geodesic metric space $(\mathrm Y, d)$. There exist   constants $\epsilon=\epsilon(f), R=R(f)>0$ for  any contracting element $f\in G$   such that  the following holds. 

Let $\theta_i:\mathbb R_{\ge 0}\to (0, 1]$ be such that $\theta_i(n)\ge n^{-a}$ and $a\in (0,1)$ where $i=1,2$. Then  for  any integer $m>0$,  the set of elements $g$ with $n=d(o, go)$ {satisfying the following properties is generic}:
\begin{enumerate}
\item
$n\ge \tau [g]  \ge (1-\theta_1(n)) n$,
\item
$ d(o, \tax_R(g)) \le n \theta_2(n)$,
\item
any  bi-infinite geodesic in a finite neighborhood of $\ax(g)$ contains an $(\epsilon ,f^m)$-barrier.
\end{enumerate}

Moreover, if $\liminf_{n\ge 1}\theta_i(n)>0$ for $i=1, 2$, then the above set is exponentially generic.
\end{thm}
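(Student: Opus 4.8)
The plan is to build the target set as a finite intersection of sets already known (or easily shown) to be generic/exponentially generic, and then invoke the fact that a finite intersection of such sets is again of the same type. First I would fix two independent contracting elements $f_1, f_2$ with $d(o, f_io)\gg 0$, chosen compatibly with the given $f$ so that the constant $D$ produced from their orbit diameters exceeds the threshold $D_0$ of Proposition \ref{admissible} and of Lemma \ref{LengthPeriAdmPath}; one may take $f_1, f_2$ to be high powers of $f$ conjugated by suitable independent elements, or simply any two independent contracting elements, adjusting at the end to guarantee that the barriers produced can be taken with $f$-label. The first genericity input is Theorem \ref{Conj2BarrierFree} (equivalently Corollary \ref{GenericSetTwoLengths}): the set $\mathcal G_0$ of minimal representatives $g$ whose geodesic $[o,go]$ contains both an $(\epsilon,f_1)$-barrier and an $(\epsilon,f_2)$-barrier is exponentially generic, and on it $0\le \len_o[g]-\tau[g]\le B$. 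Since we only need stable length comparisons up to uniform error and since for generic $g$ the element $g$ is itself a minimal-length contracting element in its class, this already delivers property (1) in a crude form: $\tau[g]\ge d(o,go)-B\ge (1-\theta_1(n))n$ once $n$ is large relative to $B/\theta_1(n)$, i.e. for $n\gg 0$ when $\theta_1(n)\ge n^{-a}$. A small point to handle: property (1) is stated for $g$ itself, not a conjugate, so I must use that the chosen generic representative is minimal, hence $d(o,go)=\len_o[g]$; this is exactly the content built into Corollary \ref{GenericSetTwoLengths}.

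The second input is Lemma \ref{fractalbarrier} applied with the function $\overrightarrow\theta$ concentrated near the basepoint: taking, say, the interval $[o,go]_{\overrightarrow\theta}$ to be the initial segment of relative length comparable to $\theta_2(n)$ (formally, set the left parameter to $0$ and the right parameter to $1-\theta_2(n)/2$, so that $\theta(n)\asymp\theta_2(n)\ge n^{-a}$), we learn that generically $[o,go]$ contains a \emph{proper} $(\epsilon,f)$-barrier $t\ax(f)$ whose entry/exit points lie within the first $\theta_2(n)n$ of $\alpha=[o,go]$. Running the periodic-admissible-path construction of Proposition \ref{StablePointedLength} with this barrier $\mathbf b=t\ax(f)$ as the building block — i.e. forming $\gamma=\cup_{i\in\mathbb Z} g^i([x,y]_\alpha\cdot[y,gx])$ — produces a periodic $(g,D,\sigma)$-admissible path, and by Lemma \ref{LengthPeriAdmPath}(1) the set $\tax(g)=\cup_{X\ne Y\in\mathbb A(g)}\proj_X(Y)$ is a stable $R$-axis for a uniform $R=R(f)$. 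Since the barrier point $x$ sits within distance $\theta_2(n)n$ of $o$ along $\alpha$, and $\proj_{\mathbf b}(Y)$ for the neighbouring $Y\in\mathbb A(g)$ lies within bounded distance of $x$ by the projection estimate used in the proof of Proposition \ref{admissible}, we get $d(o,\tax_R(g))\le \theta_2(n)n + O(1)\le \theta_2(n)n$ for $n\gg 0$ after absorbing the additive constant — this is property (2). Property (3) — that \emph{every} bi-infinite geodesic in a finite neighbourhood of $\ax(g)$ contains an $(\epsilon,f^m)$-barrier — follows from the same construction: by Proposition \ref{admissible}, any such geodesic $\alpha'$ must pass within $C$ of each $X_i\in\mathbb A(g)$ with entry/exit points $\epsilon$-close to $(p_i)_\pm$, and $X_i = g^i\cdot t\ax(f)$; since $p_i$ has length $\ge D \gg d(o,f^m o)$, the segment of $\alpha'$ near $X_i$ fellow-travels a translate of $\langle f\rangle o$ long enough to contain the pattern $\{so, sf^m o\}$, i.e. an $(\epsilon,f^m)$-barrier, provided we chose $D$ large enough relative to $m$ and $\epsilon$ large enough relative to the contraction constants; this is why the thresholds must be taken after $m$ is given.

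The main obstacle, and the step requiring the most care, is the simultaneous coordination of the two barrier constructions: the one from Lemma \ref{fractalbarrier} (which gives the \emph{proper} barrier near the basepoint controlling property (2)) and the one from Corollary \ref{GenericSetTwoLengths} (two barriers controlling the stable-length estimate (1) and the admissibility of $\gamma$). A priori these produce different admissible paths and hence different candidate stable axes; I would reconcile them by noting that for large enough orbit diameters $d(o,f_io)$, any two of these barriers are either far apart along $\alpha$ or their contracting sets have only bounded ($\le\mathcal R(\epsilon)$) overlap, so the periodic admissible path can be built from whichever barrier lies nearest $o$ while still using a second, independent barrier to verify the bounded-projection condition (\textbf{BP}) exactly as in the Claim inside the proof of Proposition \ref{StablePointedLength} — the key inputs being that $\mathbb X$ has bounded intersection and that $g$ is a minimal representative so that $d(x,gx)\ge d(o,go)-2\epsilon$. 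Once all three properties are realized on a common set obtained by intersecting finitely many (exponentially) generic sets — the barrier-conjugacy-genericity set of Theorem \ref{Conj2BarrierFree}, the fractional-barrier set of Lemma \ref{fractalbarrier}, and the minimality/non-contracting-negligibility set of Lemma \ref{TightConjugacyBarrier} — the conclusion follows, with the ``moreover'' (exponential genericity) statement being automatic because all three ingredient sets are exponentially generic precisely when $\liminf\theta_i>0$, the SCC hypothesis supplying the exponential rates via Theorem \ref{GrowthTightThm} and the ``moreover'' clauses of Lemmas \ref{fractalbarrier} and \ref{ThreeDecompTight}.
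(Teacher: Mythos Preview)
Your proposal has a genuine gap. You repeatedly invoke that ``for generic $g$ the element $g$ is itself a minimal-length contracting element in its class,'' and you claim this is ``exactly the content built into Corollary~\ref{GenericSetTwoLengths}.'' It is not. Corollary~\ref{GenericSetTwoLengths} asserts that on an exponentially generic set one has $0\le \ell_o[g]-\tau[g]\le B$; this compares the \emph{pointed length of the class} to the stable length, and says nothing about $d(o,go)-\ell_o[g]$. The generic set there comes from Theorem~\ref{Conj2BarrierFree}, which only guarantees that the \emph{minimal representative} of $[g]$ carries the required barriers --- not that $g$ itself is minimal. Establishing that generic $g$ satisfies $d(o,go)-\tau[g]\le \theta_1(n)\,n$ is precisely property~(1) of the theorem you are trying to prove, so using minimality here is circular.

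This matters throughout your argument: the periodic-admissible-path construction you borrow from Proposition~\ref{StablePointedLength} needs the bounded-projection condition~(\textbf{BP}) for $[y,gx]$, and the Claim in that proof derives it from minimality via the inequality $d(y,gy)\ge d(o,go)-2\epsilon$. Without minimality you have no control over how $g[o,x]_\alpha$ projects to $\mathbf b$, so the path $\gamma$ need not be admissible, and none of properties~(1)--(3) follow.

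The paper's proof avoids minimality altogether by a different mechanism. It applies Lemma~\ref{fractalbarrier} \emph{twice}, placing proper $(\epsilon,f^k)$-barriers $\mathbf b_1,\mathbf b_2$ in the intervals $[\theta_n/4,\theta_n/2]$ and $[1-\theta_n/2,1-\theta_n/4]$ of $[o,go]$ (both bounded away from the endpoints). It then discards the growth-tight set where $\mathbf b_2\cap N_C(g\alpha)\ne\emptyset$ or $\mathbf b_1\cap N_C(g^{-1}\alpha)\ne\emptyset$; the tightness comes from an almost-geodesic decomposition $g=h\hat g h^{-1}$ with $d(o,ho)\ge (\theta/4)d(o,go)$. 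On the remaining set the disjointness directly yields $\proj_{\mathbf b_1}([y,gx])\le C$ and $\proj_{g\mathbf b_1}([y,gx])\le C$ by the contracting property, with no appeal to minimality. If you want to repair your argument, you should imitate this: place a second barrier near $go$, and replace the minimality step by the growth-tight discard.
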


By definition of a stable axis, we could replace $\tax_R(g)$ in the above property (2) with any bi-infinite geodesic   in a finite neighborhood of $\ax(f)$.  

\begin{proof}
We shall assume $\theta_1(\cdot)=\theta_2(\cdot)$ for ease of exposition. The general case   follows by taking  the intersection of two (exponentially) generic sets.

Fix any  $\theta:\mathbb R_{\ge 0}\to (0, 1/2]$ such that $\theta(n)\ge n^{-a}$ for some $a\in (0,1)$. For given $m>0$, we shall choose a big integer $k=k(m)>0$ determined below (at the last paragraph). Let $\mathcal G$ be the set of elements $g\in G$ satisfying the following two conditions:
\begin{enumerate}
\item
 the $[\frac{\theta_n}{4}, \frac{\theta_n}{2}]$-interval of $\alpha:=[o, go]$ contains an  $(\epsilon, f^k)$-barrier $t_1\in G$,
 \item
 the $[1-\frac{\theta_n}{2}, 1-\frac{\theta_n}{4}]$-interval of   $\alpha$ contains an   $(\epsilon, f^k)$-barrier $t_2\in G$,
 \end{enumerate}
  where    $\theta_n:=\theta(n)$ and  $n=d(o, go)$. 

By Lemma \ref{fractalbarrier}, the set $\mathcal G$ is generic, and when $\liminf_{n\ge 1}\theta(n)>0$, this set is exponentially generic. 

Denote $\mathbf b_1=t_1\ax(f)$ and $\mathbf b_2=t_2\ax(f)$. Up to ignoring a growth tight set, we can first assume that $\mathbf b_2\cap N_C(g\alpha)=\emptyset$ and $\mathbf b_1\cap N_C(g^{-1}\alpha)=\emptyset$. Indeed, we shall prove the set of elements $g\in G$  satisfying $\mathbf b_2\cap N_C(g\alpha)\ne \emptyset$ is growth tight; the other   possibility is analogous. 

\begin{figure}[htb] 
\centering \scalebox{0.8}{
\includegraphics{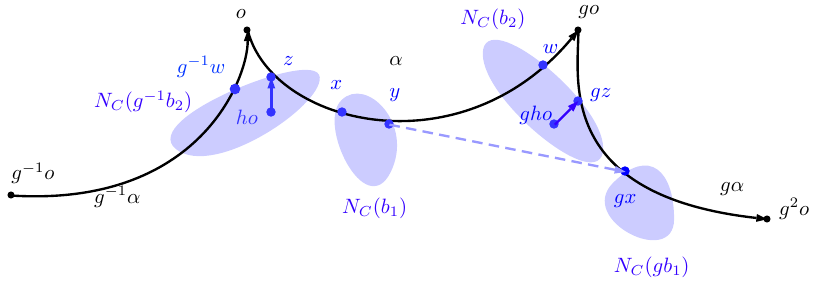} 
} \caption{The relative position of the barriers $g^{-1}\mathbf b_2, \mathbf b_1, \mathbf b_2$ and $g\mathbf b_1$.} \label{figure2}
\end{figure}

Let $z$ be the entry point of $\alpha$ in $N_C(g^{-1}\mathbf b_2)$, so $gz$ is the entry point of $g\alpha$ in $N_C(\mathbf b_2)$. Let $w$ be the exit point of $\alpha$ in $N_C(\mathbf b_2)$. If two geodesics issuing from the same point intersect the $C$-neighborhood of a $C$-contracting set, we can deduce from the contracting property that  their entry points are bounded above by a distance $4C$. Thus, $d(gz, w)\le 4C$. 

Let us choose $h\in G$ such that $d(ho, z)\le C$. Then $|d(gho, ho)-d(z, w)|\le d(gho, w)+d(z, ho)\le C+d(gho, gz)+d(gz, w)\le 6C$. Setting $\hat g: =h^{-1} g h$, we see that the element $g=h\hat g h^{-1}$ is an almost geodesic decomposition:
\begin{equation}\label{almostgeodecom}
d(o, go)\simeq_{24C} 2d(o, ho)+d(o, \hat go).
\end{equation}   
By the condition (1) as above, we see that $d(o, ho)\ge d(o, z)-d(z, ho)\ge \theta_n/4 \cdot d(o, go) - C$. By Lemma \ref{ThreeDecompTight}, we deduce from (\ref{almostgeodecom})  that the set of   $g$ with $\mathbf b_2\cap N_C(g\alpha)\ne \emptyset$ is   growth tight.
 
From now on, let us assume 
that $\mathbf b_2\cap N_C(g\alpha)=\emptyset$ and $\mathbf b_1\cap N_C(g^{-1}\alpha)=\emptyset$. We shall construct a periodic admissible path for the element $g$. 
 
Let $x, y$ be the entry and exit points of $\alpha$ in $N_C(\mathbf b_1)$ respectively.
We now prove that a path $\gamma$ constructed as follows is a $(g, D, C)$-admissible path  
$$
\gamma=\cup_{i\in \mathbb Z} g^i([x, y]_\alpha \cdot [y, gx]),
$$
where $g^i[x, y]_\alpha$ are associated with contracting sets $g^i N_C(\mathbf b_1)$ with bounded intersection. By the definition of admissible path, it remains to show that $[y, gx]$ has a bounded $C$-projection to $N_C(\mathbf b_1)$  and $N_C(g\mathbf b_1)$. Since  $[y, gx]$ is disjoint from them, the projection bounded by $C$ follows by the contracting property. Thus,  $\gamma$ is a periodic  $(g, D, C)$-admissible path.

We now verify the first two assertions. Let $R=R(f,C)$ be the constant given by Lemma \ref{LengthPeriAdmPath}. By the construction of $\gamma$, the collection $\mathbb X$ of contracting sets $g^i N_C(\mathbf b_1)$ for $i\in\mathbb Z$  is a combinatorial axis of $g$. By Lemma \ref{LengthPeriAdmPath}, the stable axis $\tax_R(g)$ is given by the union of mutual projections of elements in $\mathbb X$. By Proposition \ref{admissible}, the point $x\in\gamma$  is $\epsilon$-close to some point in $\tax_R(g)$ and thus $$d(o, \tax_R(g)) \le  d(o,x)+\epsilon \le n \theta_n+\epsilon.$$ 
From (\ref{k1EQ}) in the proof of Lemma \ref{LengthPeriAdmPath}, we get $$
\tau[g]\ge d(x, gx)-3R\ge d(x,go)-d(o,x)\ge  (1-\theta_n)n-3R$$ 

At last, we need to verify the assertion (3) that every geodesic $\beta$ in $\ax(g)$ contains an $(\epsilon, f^m)$-barrier for   given $m>0$. By Proposition \ref{admissible}, we obtain that $d(x, \alpha), d(y, \alpha)\le \epsilon.$ Since $x, y$ are the entry and exit points of $\alpha$ in the $(\epsilon, f^k)$-barrier $\mathbf b_1$, we have $d(x, y)\ge d(o, f^ko)-2\epsilon$. Consider $\ax(f)$ as a $C$-contracting quasi-geodesic path. By Proposition \ref{Contractions}.(1), we can choose $k$ as a large multiple of $m$ such that there exist a constant $\tilde \epsilon=\tilde \epsilon(C)>0$ and  a subpath  from $\ax(f)$ labeled by $f^m$ two endpoints in the $\tilde \epsilon$-neighborhood of $[x, y]_\alpha$. By definition of barriers,   $\beta$ contains an $(\tilde \epsilon, f^m)$-barrier. Setting  $\epsilon=\max\{\tilde \epsilon,  \epsilon\}$, the assertion (3) follows.  
\end{proof}

\section{Lower bound on   conjugacy classes}\label{Section4}
Most of this section assumes only that $G\curvearrowright \mathrm Y$ is a properly discontinuous action on a proper geodesic metric space with a contracting element,
 At the end of the section, we derive the lower bound in \ref{mainthm}   under the additional assumption action has purely exponential growth. 

We first recall a result from \cite{YANG10} which holds for any proper action. This could be thought of as an analogue to  an orbit closing lemma in Anosov flows. For any $\Delta>0$, we define $A(o, n, \Delta)=\{g\in G: |d(o, go)-n|\le \Delta\}$.

\begin{lem}\cite[Lemma 2.19]{YANG10}\label{ClosingLemma}
There exist  a set $F$ of three contracting elements   and constants $0<\theta<1, \sigma, D, \Delta>0$ with the following property. For each $n>0$, there exist a subset $T$ of $A(o, n, \Delta)$ and an element $\tilde f\in F$  such that 
\begin{enumerate}
\item  
$\sharp T \ge \theta\cdot \sharp A(o, n, \Delta)$,  
\item
for all but finitely many $f\in E(\tilde f)$, the map $$g\in T\mapsto  fg\in  f\cdot T$$ is injective.
\item
each $fg\in fT$ is a contracting element so that the path $$\gamma(fg):=\cup_{i\in \mathbb Z} {(fg)}^i [o,fo][fo, fgo]$$ is a periodic $(fg, D, \sigma)$-admissible path. 
\item
for any $g\ne g'\in T$, two paths $\gamma(fg)$ and $\gamma(fg')$ have infinite Hausdorff distance. 
\end{enumerate}
\end{lem}
\begin{proof}[Sketch of proof]
By \cite[Lemma 2.19]{YANG10}, the assertions (2-4) hold for any  $R$-separated set $T$ of $A(o, n, \Delta)$ for a sufficiently large $R>0$. The proof was to verify that the labeled path $\gamma(fg)$ by $fg$ as above is $(D, \sigma)$-admissible, i.e. a periodic $(fg, D, \sigma)$-admissible path in the terminology of this paper.   

For $g\ne g'\in T$, by Proposition \ref{admissible}, the sufficient  $R$-separation implies that if $\gamma(fg)$ and $\gamma(fg')$ has finite Hausdorff distance, then $g=g'$. So the injectivity of the above maps follows.  Choosing a maximal net $T$ then takes the major proportion of $A(o, n, \Delta)$ as is requested by the assertion (1). The quantifier ``for all but finitely many'' thus follows for any $f$ with $d(o, fo)>D$ by Proposition \ref{admissible}. 
\end{proof}

Fix a  choice $f\in E(\tilde f)$ satisfying the second statement of Lemma \ref{ClosingLemma}. {Note that $\ax(f)=\ax(\tilde f)$ since $E(f)=E(\tilde f)$.} Then the set $\tilde T:= f\cdot T$ consists of contracting elements. 

\begin{lem}\label{PointLengthT}
There exists a constant $R=R(f, \sigma, D,\Delta)>0$ such that for each element $fg\in \tilde T$, we have $|\tau[fg]-n|\le R$ and $|\len_o[fg]-n|\le R$. 
\end{lem}
\begin{proof}
By Lemma \ref{ClosingLemma}.(3), the element $fg$ admits a periodic $(fg, D, \sigma)$-admissible path $\gamma$. Thus, the conclusion follows from  Lemma \ref{LengthPeriAdmPath}.
\end{proof}

The following result is from \cite[Lemma 7.2]{CoK1}, and a proof is given for completeness.
\begin{lem}\label{UniformityfromProper} 
Given a compact set $K\subset \mathrm Y$, there exists an integer $C=C(K)>0$ such that  for  any geodesic segment $\alpha$, we have $$\sharp \{g\in G: g\alpha\cap K\ne \emptyset\}\le C \cdot\len(\alpha).$$

\end{lem}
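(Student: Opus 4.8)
The statement to prove is: given a compact set $K\subset\mathrm Y$, there is an integer $N=N(K)$ so that for any geodesic segment $\alpha$, at most $N\cdot\len(\alpha)$ translates $g\alpha$ meet $K$. The plan is to reduce the count to the familiar fact that a proper action has only finitely many group elements moving a fixed compact set within a fixed distance of another compact set, and then to cover $\alpha$ by a bounded number of unit-length pieces so the bound becomes linear in $\len(\alpha)$.

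First I would fix a basepoint, say the $o$ already in play, and enlarge $K$ if necessary to a closed ball $B=B(o,\rho)$ containing $K$; it suffices to bound the number of $g$ with $g\alpha\cap B\neq\emptyset$. Cover $\alpha$ by consecutive subsegments $\alpha_1,\dots,\alpha_k$ each of length at most $1$, with $k\le\len(\alpha)+1\le 2\len(\alpha)$ (treating the degenerate short case separately, or absorbing the $+1$ into the constant). If $g\alpha$ meets $B$, then $g\alpha_j$ meets $B$ for at least one $j$, so the total count is at most $\sum_{j=1}^k \sharp\{g: g\alpha_j\cap B\neq\emptyset\}$. Hence it is enough to produce a single constant $N_0=N_0(\rho)$, independent of the particular subsegment, bounding $\sharp\{g: g\beta\cap B\neq\emptyset\}$ for every geodesic segment $\beta$ with $\len(\beta)\le 1$; then $N:=2N_0$ works.

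For the uniform bound on a short segment $\beta$: if $g\beta\cap B(o,\rho)\neq\emptyset$, pick $p\in\beta$ with $gp\in B(o,\rho)$; since $\len(\beta)\le 1$ we get $d(gp,g\beta_-)\le 1$, hence $d(o,g\beta_-)\le\rho+1$, i.e.\ $g$ moves the point $\beta_-$ into the compact ball $B(o,\rho+1)$. The subtlety is that $\beta_-$ itself varies with $\beta$, so I cannot directly invoke properness at a single point; instead I replace $\beta_-$ by a lattice point of the orbit. Concretely, since $Go$ need not be coarse-dense I argue as follows: fix any $q\in\mathrm Y$; the map sending $\beta$ (with $\len\beta\le 1$) to the set $\{g: d(o,g\beta_-)\le\rho+1\}$ — I want a uniform size. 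Here I would instead phrase it via the orbit: choose for $\beta_-$ a nearest orbit point is not available in general, so the cleanest route is to note that $\{g\in G: d(go, B(o,\rho+2))\le \operatorname{diam}? \}$ — on reflection, the genuinely clean statement is that for compact sets $K_1,K_2$ the set $\{g\in G: gK_1\cap K_2\neq\emptyset\}$ is finite by properness, and one applies this with $K_1=\{\beta_-\}$ only after fixing $\beta_-$; to get uniformity over $\beta_-$ one lets $K_1$ range over a single large compact set. So: the main obstacle is that the ``point'' $\beta_-$ moves. I resolve it by observing that if $g\beta\cap B(o,\rho)\neq\emptyset$ then $g$ carries \emph{some} point of the fixed compact set $\mathrm Y$ — no — rather, that $\beta$ lies in $B(g^{-1}o,\rho+1)$, so $g^{-1}o\in B(x,\rho+1)$ for any $x\in\beta$; picking $x$ to be one endpoint, the endpoints of all such $\beta$ with $g\beta\cap B\neq\emptyset$ and a \emph{given} $g$ lie in $B(g^{-1}o,\rho+1)$, which does not immediately help count $g$'s.

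Let me restate the resolution more carefully, since this is the heart of the argument. Fix the compact set $K$ and enlarge to $B=B(o,\rho)\supset K$. I claim $N_0(\rho):=\sharp\{g\in G: g B(o,\rho+1)\cap B(o,\rho+1)\neq\emptyset\}$, which is finite by properness of the action, is the desired uniform constant for short segments $\beta$ \emph{after translating the indexing}. Indeed, fix one such $\beta$ with $\len\beta\le1$; among all $g$ with $g\beta\cap B(o,\rho)\neq\emptyset$, fix one, call it $g_0$ (if none exist the count is $0$). For any other such $g$, both $g\beta$ and $g_0\beta$ meet $B(o,\rho)$, and $\beta\subset B(\beta_-,1)$, so $g\beta,g_0\beta\subset B(go_?,\dots)$ — concretely $g\beta\subset N_1(B(o,\rho))=B(o,\rho+1)$ and likewise $g_0\beta\subset B(o,\rho+1)$; therefore $(g g_0^{-1})(g_0\beta)\subset B(o,\rho+1)$ and $g_0\beta\subset B(o,\rho+1)$, so $g g_0^{-1}$ moves the nonempty set $g_0\beta\subset B(o,\rho+1)$ into $B(o,\rho+1)$, whence $gg_0^{-1}\in\{h: hB(o,\rho+1)\cap B(o,\rho+1)\neq\emptyset\}$, a set of size $N_0(\rho)$. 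Thus there are at most $N_0(\rho)$ choices of $g$ for this $\beta$, uniformly in $\beta$. Combining with the covering of $\alpha$ into $\le 2\len(\alpha)$ short pieces gives $\sharp\{g: g\alpha\cap K\neq\emptyset\}\le 2N_0(\rho)\len(\alpha)$, so $N:=2N_0(\rho)$ works. The one routine point to check is the degenerate case $\len(\alpha)<1$, handled directly by $\sharp\{g:g\alpha\cap K\neq\emptyset\}\le N_0(\rho)$, which is $\le N\len(\alpha)$ once one also allows, harmlessly, the convention that the bound is vacuous or by noting single points still satisfy it after a trivial adjustment of the constant. The expected main obstacle — the varying endpoint of the short segment — is exactly what the translation-by-$g_0^{-1}$ trick above circumvents, reducing everything to one application of properness of the $G$-action.
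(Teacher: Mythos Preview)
Your proposal is correct and takes essentially the same approach as the paper: subdivide $\alpha$ into segments of length at most $1$, then for each short piece fix one group element $g_0$ hitting $K$ and observe that any other such $g$ differs from $g_0$ by an element moving a fixed compact set into its $1$-neighbourhood, which is finite by properness. The only cosmetic differences are that the paper works with $K$ and $N_1(K)$ directly rather than first enlarging $K$ to a ball, and the paper's bound $N=\sharp\{g: gK\cap N_1(K)\neq\emptyset\}$ avoids the extra factor of $2$.
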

\begin{proof}
We subdivide the geodesic $\alpha$ into a maximal number of segments $\alpha_i$ $(0\le i \le \ceil {\len(\alpha)})$ with length at most 1. Set $C:= \sharp \{g\in G: gK\cap N_1(K)\ne \emptyset\}$. Observe that  for each $\alpha_i$, {we have} $$\sharp\{g\in G: g\alpha_i\cap K\ne\emptyset\}\le C.$$ Indeed, fix an element $g_0$  so that $\alpha_i\cap g_0K\ne\emptyset$. Thus for any element $g$ from the left-hand set, we have $gg_0 K\cap N_1(K)\ne \emptyset$ and the above inequality follows. The conclusion thus  follows.
\end{proof}

Denote by $[\tilde T]$ the set of conjugacy classes $[fg]$ where $fg\in \tilde T$. Recall that $\tilde T=fT$, where $T\subset A(o,n, \Delta)$. We obtain the following lower bound.
\begin{lem}\label{NumElemInConj}
There exists a   constant $L=L(f,o, \Delta)>0$ such that each  conjugacy class $[fg]\in [\tilde T]$ contains at most $L n$ elements in $\tilde T$.
\end{lem}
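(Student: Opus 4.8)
The plan is to bound the number of elements of $\tilde T$ lying in a fixed conjugacy class $[fg]$ by controlling how the various conjugates can interact with a fixed bounded region. Fix $fg\in\tilde T$ and suppose $h_1(fg)h_1^{-1},\dots,h_N(fg)h_N^{-1}$ are distinct elements of $\tilde T$ lying in $[fg]$. Each element $h_i(fg)h_i^{-1}$ of $\tilde T$ is of the form $fg'$ with $g'\in T$, hence satisfies $d(o, (fg')o)\le n+\Delta$ and, by Lemma \ref{ClosingLemma}(3) together with Lemma \ref{LengthPeriAdmPath}, labels a periodic $(fg', D,\sigma)$-admissible path whose combinatorial axis is $\{(fg')^j\ax(f): j\in\mathbb Z\}$; in particular each such element has a bi-infinite quasi-axis $\beta_i$ within bounded Hausdorff distance of $\ax(fg')$, passing within a uniformly bounded distance of $o$. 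The key geometric input is that $\beta_i$ is a translate of (a neighborhood of) $\ax(fg)$: since $h_i(fg)h_i^{-1}$ is conjugate to $fg$, its axis is $h_i\ax(fg)$, and it comes within bounded distance $R_0$ of $o$, i.e. $h_i\ax(fg)\cap B(o,R_0)\ne\emptyset$ for a uniform $R_0$ coming from Lemma \ref{LengthPeriAdmPath} and Proposition \ref{admissible}.

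First I would fix a bi-infinite geodesic $\alpha_0$ within finite Hausdorff distance of $\ax(fg)$, and restrict attention to a fundamental segment: by Lemma \ref{PointLengthT} we have $\tau[fg]\asymp n$, so a subsegment $\alpha_0'\subset\alpha_0$ of length $\asymp n$ contains a fundamental domain for the $\langle fg\rangle$-action on $\alpha_0$. The condition $h_i\ax(fg)\cap B(o,R_0)\ne\emptyset$ then says $h_i^{-1}\cdot o$ lies within a bounded distance of $\alpha_0$, hence—after multiplying $h_i$ on the right by a suitable power $(fg)^{k_i}$, which does not change the conjugate $h_i(fg)h_i^{-1}$—we may assume $h_i^{-1}o$ lies within a uniformly bounded distance of the fixed segment $\alpha_0'$. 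Thus the modified $h_i^{-1}$ all send $o$ into the compact set $K:=N_{R_1}(\alpha_0')$ for a uniform $R_1$, where $\alpha_0'$ has length $\asymp n$.

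Now I would apply Lemma \ref{UniformityfromProper}: taking the geodesic in that lemma to be $\alpha_0'$ (of length $\asymp n$) and the compact set to be $N_{R_1}(o)$, properness of the action gives that the number of group elements $t$ with $t\,\alpha_0'\cap N_{R_1}(o)\ne\emptyset$ is at most $N(R_1)\cdot\len(\alpha_0')\le L n$ for a uniform $L$. Since distinct conjugates $h_i(fg)h_i^{-1}$ give (after the normalization) distinct $h_i^{-1}$—here one uses that if $h_i^{-1}=h_j^{-1}$ then the conjugates coincide, and the right-multiplication adjustment only changes $h_i$ within the coset $h_iE(fg)$, on which the number of relevant representatives is uniformly bounded because $E(fg)$ contains $\langle fg\rangle$ with bounded index and $\ax(fg)$ has bounded intersection with its distinct $G$-translates—each distinct conjugate contributes at most boundedly many such $h_i^{-1}$, and all of them lie in the set counted by Lemma \ref{UniformityfromProper}. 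Absorbing the bounded multiplicity into $L$ yields $\sharp([fg]\cap\tilde T)\le Ln$.

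The main obstacle I anticipate is the bookkeeping in the normalization step: passing from an arbitrary conjugating element $h_i$ to one sending $o$ into a fixed compact neighborhood of a fixed fundamental segment, and simultaneously showing that two genuinely distinct conjugacy-class representatives in $\tilde T$ cannot collapse to the same (or to boundedly many) normalized $h_i^{-1}$. This requires carefully using that $E(fg)$ is virtually cyclic with $\langle fg\rangle$ of bounded index, that the axis $\ax(fg)$ has $\mathcal R$-bounded intersection with its distinct translates (Example \ref{examples} and Proposition \ref{admissible}), and that properness bounds the torsion ambiguity near $o$; once this finite-to-one reduction is in place, Lemma \ref{UniformityfromProper} delivers the linear bound immediately.
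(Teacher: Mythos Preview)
Your proposal is correct and follows essentially the same approach as the paper: normalize the conjugating element so that it moves $o$ into a fixed compact neighborhood of a fundamental segment (of length $\asymp n$) of the axis, then invoke Lemma \ref{UniformityfromProper}. The only cosmetic difference is that you conjugate in the direction $h_i(fg)h_i^{-1}=fg'\in\tilde T$, whereas the paper writes $h'(fg')h'^{-1}=fg$; this reversal actually makes your injectivity step cleaner than you feared, since if two normalized conjugators coincide, $h_i=h_j$, then immediately $h_i(fg)h_i^{-1}=h_j(fg)h_j^{-1}$ and the conjugates coincide --- no appeal to $[E(fg):\langle fg\rangle]$ or bounded intersection is needed at that point. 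The paper instead exploits the admissible-path structure with contracting pieces $(fg)^i\ax(f)$ and the bounded intersection of $\mathbb X=\{g\ax(f)\}$ to control collisions, which is more than strictly necessary but gives the same linear bound.
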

\begin{proof}
Set $A:=[fg]\cap \tilde T$ and fix a choice $fg\in A$.  By Lemma \ref{ClosingLemma},  $\gamma:=\gamma(fg)$ is a periodic $(fg, D, \tau)$-admissible path  associated with contracting sets $(fg)^i\ax(f)$ where $i\in \mathbb Z$. It is clear that  $\langle fg\rangle$ acts on $\gamma$ with a fundamental domain $S:=[o, fo]  [fo, fgo]$.
 
Similarly, for any $fg'\in A\setminus \{fg\}$,  we have a periodic $(fg', D, \tau)$-admissible path   $\gamma'$ and $S':=[o, fo]  [fo, fg'o]$ a  fundamental domain for $\langle fg'\rangle$ on $\gamma'$. 

Since $h' fg'h'^{-1}=fg$ for  some $h'\in G$, it follows that $h'\gamma'$ has a finite Hausdorff distance to $\gamma$. By Proposition \ref{admissible}, there exists a   constant $\epsilon>0$ such that $\gamma$ intersects the $\epsilon$-neighborhood of every segment $(fg')^i[o,fo]$ of $h'\gamma'$. Thus,
\begin{equation}\label{intersecEQ}
N_\epsilon(S) \cap h'(fg')^i[o, fo]\ne\emptyset.
\end{equation}

\begin{figure}[htb] 
\centering \scalebox{0.8}{
\includegraphics{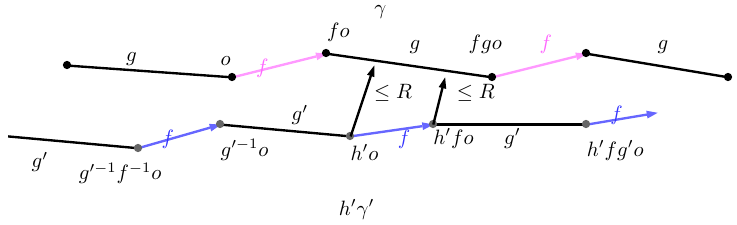} 
} \caption{Choice of the conjugator $h'$: the relative position of $h'\gamma'$ to $\gamma$.} \label{figure3}
\end{figure}

Up to composing $h'$ on the right with  elements from $\langle fg'\rangle$, we can assume that  $h'\cdot S'\cap N_\epsilon(S)\ne \emptyset$. Setting  $R=\epsilon+d(o, fo)$, we have $h' N_R([o, fo])\cap [o, go] \ne \emptyset$. 

Consider the following $$
Z:=\{h'\in G: h'K\cap [o,go]\ne\emptyset\}
$$ for the compact set $K:=N_{R}([o, fo])$.  By Lemma \ref{UniformityfromProper}, there exists $C=C(K)>0$ such that $$\sharp Z\le C \cdot d(o, go) \le  C  \cdot (n+\Delta)$$ for $g\in A(o, n, \Delta)$. Hence,     any $fg'\in A\setminus \{fg\}$ is conjugated by some $h'\in Z$  to the fixed $fg$.

Furthermore,   observe that no two $fg_1\ne fg_2\in A\setminus fg$  admit    $h(fg_i)h^{-1}=fg$ for a common conjugator  $h\in Z$. If not, the admissible paths $\gamma(fg_1)$ and $\gamma(fg_2)$ have finite Hausdorff distance. This contradicts to Lemma \ref{ClosingLemma}.(4).

In conclusion, for every $fg\in \tilde T$, the set $A=[fg]\cap \tilde T$ contains at most $\sharp A\le C(n+\Delta)+1\le Ln$ elements for some $L>0$. The constant $L$ depends only on $\Delta$, the basepoint $o$ and the choice of $f$.
\end{proof}

When the action has purely exponential growth, we obtain the lower bound for all   conjugacy classes in \ref{mainthm}.
\begin{cor}[Lower bound]\label{LbdConjGrowth}
Assume that $G\curvearrowright \mathrm Y$ is a proper action with a contracting element  having purely exponential growth on a proper geodesic metric space.  Then there exists a constant $\theta >0$ such that
$$
\sharp \mathcal C(o, n)\ge  \frac{\exp(\e Gn)}{\theta n},
$$
and $$\displaystyle\sharp \big(\mathcal C(n)\cap \mathcal C(o, n)\big)  \ge  \frac{\exp(\e Gn)}{\theta n}.$$
\end{cor}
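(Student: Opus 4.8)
The plan is to feed the set $\tilde T$ furnished by Lemma \ref{ClosingLemma} into the counting bound of Lemma \ref{NumElemInConj}. First I would observe that purely exponential growth propagates from balls to annuli: since $\sharp N(o,m)\asymp\exp(\e{G} m)$ and $\e{G}>0$, for $\Delta$ large enough (which we may assume when invoking Lemma \ref{ClosingLemma}) one has
\[
\sharp A(o,n,\Delta)\ \ge\ \sharp N(o,n+\Delta)-\sharp N(o,n-\Delta)\ \succ\ \exp(\e{G} n),
\]
because the subtracted term is $\prec\exp(-2\e{G}\Delta)\exp(\e{G}(n+\Delta))$, a small fraction of the first term once $\Delta$ is large. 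Now fix such a $\Delta$, take $T\subset A(o,n,\Delta)$ and $F$, $\theta$ as in Lemma \ref{ClosingLemma}, and set $\tilde T=f\cdot T$ for a choice of $f\in E(\tilde f)$ with $d(o,fo)>D$ as in the second bullet of that lemma; then $\sharp\tilde T=\sharp T\ge\theta\cdot\sharp A(o,n,\Delta)\succ\exp(\e{G} n)$ and every element of $\tilde T$ is contracting.

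Next I would locate the conjugacy classes of the elements of $\tilde T$. By Lemma \ref{PointLengthT} there is a uniform constant $B>0$ (depending only on $F$, $o$ and $\Delta$) such that each $h\in\tilde T$ satisfies $0<\tau[h]\le\len_o[h]\le n+B$, hence $[h]\in\mathcal C(o,n+B)\cap\mathcal C(n+B)$. On the other hand, Lemma \ref{NumElemInConj} shows that each conjugacy class contains at most $Ln$ elements of $\tilde T$ for a uniform $L>0$. Writing $[\tilde T]$ for the set of conjugacy classes meeting $\tilde T$, this gives
\[
\sharp\big(\mathcal C(o,n+B)\cap\mathcal C(n+B)\big)\ \ge\ \sharp[\tilde T]\ \ge\ \frac{\sharp\tilde T}{Ln}\ \succ\ \frac{\exp(\e{G} n)}{n}.
\]

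Finally I would absorb the additive shift: replacing $n$ by $n-B$ in the last display and using $\exp(\e{G}(n-B))/(n-B)\succ\exp(\e{G} n)/n$ produces a constant $\theta>0$ with $\sharp(\mathcal C(o,n)\cap\mathcal C(n))\ge\exp(\e{G} n)/(\theta n)$ for all large $n$; enlarging $\theta$ covers the remaining finitely many $n$, and since $\mathcal C(o,n)\cap\mathcal C(n)\subset\mathcal C(o,n)$ the same bound holds for $\sharp\mathcal C(o,n)$, which gives both assertions of Corollary \ref{LbdConjGrowth}. The genuine content is entirely in Lemmas \ref{ClosingLemma}, \ref{PointLengthT} and \ref{NumElemInConj}; the only thing I expect to require care is the bookkeeping of the uniform additive errors (the constant $B$ above and the quasigeodesic constants hidden in Lemma \ref{ClosingLemma}), so that the manufactured conjugacy classes land inside the prescribed sets $\mathcal C(o,n)$ and $\mathcal C(n)$, together with the elementary ball-to-annulus step for purely exponential growth.
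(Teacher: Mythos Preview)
Your proposal is correct and follows essentially the same route as the paper: combine Lemmas \ref{ClosingLemma}, \ref{PointLengthT}, and \ref{NumElemInConj}, then invoke purely exponential growth to pass from $\sharp T$ to $\exp(\e{G}n)$. The paper's own proof is terser and leaves the ball-to-annulus estimate and the absorption of the additive constant $B$ implicit, whereas you spell these out explicitly, but the substance is identical.
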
 
\begin{proof}
By Lemma  \ref{PointLengthT}, we have $[\tilde T] \subset  \mathcal C(n+R)\cap \mathcal C(o, n+R)$. Combining Lemmas \ref{ClosingLemma} and  \ref{NumElemInConj}, we obtain   $$\sharp  [\tilde T] \ge \frac{\sharp T}{L n}$$ which concludes the corollary upon assuming that $G$ has purely exponential growth.
\end{proof}

\section{Growth tightness of   fractional barrier-free sets}\label{Section5}
In this and next sections, we assume that the action $G \curvearrowright \mathrm Y$ is SCC and prepare  two further ingredients {to prove} the upper bound on $\sharp \mathcal C(n)$. The first one, contained in this section, is required when the action is not cocompact. The second one in {the} next section will address the possibly unbounded kernel of the exact sequence (\ref{ExactEg}).

\subsection{Growth tightness for fractional  barrier-free sets}

We begin with a generalization of barrier-free elements.  See \textsection \ref{SSfracbarrier} for another formulation of this notion. 

Similar results in Teichm\"{u}ller spaces have been obtained   in the work of Dowdall, Duchin and Masur \cite{DDM}. However, none of {these} could be implied by the other, and our method uses  very little information from the theory of Teichm\"{u}ller spaces except {that the action of the mapping class group on Teichm\"{u}ller space is SCC with a contracting element.}
\begin{defn}
Fix $\theta\in (0,1]$, $\epsilon, M, L>0$ and $P\subset G$. Let $g\in G$ be an element. If there exists a set $\mathbb K$ of disjoint connected open subintervals  $\alpha$ of length at least $L$ in $[o, go]$ such that each $\alpha$ is $(\epsilon, P)$-barrier-free with two endpoints $\partial \alpha \subset N_M(Go)$ {and such that}
$$
 \sum_{\alpha\in \mathbb K} \len(\alpha)\ge \theta d(o, go),
$$
then  $g$ is said to satisfy a  \textit{$(\theta, L)$-fractional $(\epsilon, P)$-barrier-free} property.

Denote by  $\mathcal V^{\theta, L}_{\epsilon, M, P}$  the set of elements $g\in G$ with the   $(\theta, L)$-fractional $(\epsilon, P)$-barrier-free property.
\end{defn}
We are actually interested in the set of elements of $g\in G$ so that $[o, go]$ spends $\theta$-percentage of time in $N_M(Go)$. Here, we prove a more general statement for   { potential future applications}.

\begin{figure}[htb] 
\centering \scalebox{0.7}{
\includegraphics{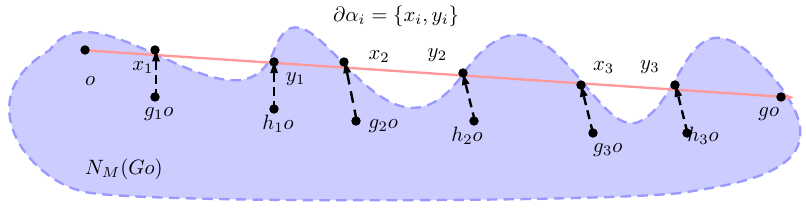} 
} \caption{An example: the set $\mathbb K$ consists of maximal connected components of length $\ge L$ outside $N_M(Go)$. See Corollary \ref{LargePercentGeneric}} \label{figure5}
\end{figure}

\begin{thm}\label{FGrowthTightThm}
Let $\epsilon, M>0$ be given by Theorem \ref{GrowthTightThm}.
For any $0<\theta\le 1$, there exists $L=L(\theta)>0$ such that $\mathcal V^{\theta, L}_{\epsilon, M, P}$  is a growth tight set for any    $P\subset G$.
\end{thm}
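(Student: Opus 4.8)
The plan is to reduce the fractional statement to the already-established growth tightness of the honest barrier-free set $\mathcal V_{\epsilon, M, P}$ (Theorem \ref{GrowthTightThm}), by a "chop and splice" argument together with a counting estimate on the positions of the pieces. Fix $0<\theta\le 1$ and let $g\in\mathcal V^{\theta, L}_{\epsilon, M, P}$ with $n=d(o,go)$. By definition there is a disjoint family $\mathbb K$ of open subintervals of $\alpha:=[o,go]$, each of length at least $L$, each $(\epsilon, P)$-barrier-free, each with endpoints in $N_M(Go)$, and with $\sum_{\alpha_j\in\mathbb K}\len(\alpha_j)\ge\theta n$. First I would pass to the subfamily of the $\lceil \theta n/L\rceil$ longest intervals (or simply observe that the number of intervals of length $\ge L$ is at most $n/L$), so that $\card{\mathbb K}\le n/L$, while keeping $\sum\len(\alpha_j)\ge\theta n$; then at least one interval $\alpha_j$ has length $\ge \theta n/\card{\mathbb K}\ge \theta L$, but more usefully the \emph{total} barrier-free length is a definite fraction of $n$.

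The key step is the following encoding. Each endpoint of each $\alpha_j$ lies in $N_M(Go)$, so we may choose group elements recording the "gaps": write $g$ as an alternating product
$$
g = u_0\, v_1\, u_1\, v_2\, u_2 \cdots v_k\, u_k,
$$
where $k=\card{\mathbb K}$, the element $v_i\in\mathcal V_{\epsilon, M, P}$ corresponds to the $i$-th barrier-free interval $\alpha_i$ (its endpoints lie within $M$ of $Go$, so after translating by a group element $v_i$ is a barrier-free element in the sense of Definition \ref{barriers}), and the $u_i$ are the complementary "gap" elements. Crucially $\sum_i d(o, v_i o) \ge \theta n - O(Mk)$ and the decomposition is almost geodesic: $\big|\,d(o,go) - \sum_i d(o,u_i o) - \sum_i d(o, v_i o)\,\big|\le O(M k)$. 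Since $k\le n/L$, choosing $L=L(\theta)$ large enough (e.g. $L \gg M/\theta$) makes the error $O(Mk)$ absorb into, say, a $\theta/2$ loss, so $\sum_i d(o, v_i o)\ge (\theta/2) n$ with an almost-geodesic decomposition.

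Now I would run the standard growth-tightness counting, exactly in the spirit of Lemma \ref{ThreeDecompTight} but with a variable number of factors. The number of elements $g\in N(o,n)$ admitting such a decomposition with prescribed combinatorial data (the value of $k$, and the lengths $\len(u_i), \len(v_i)$) is bounded by a convolution
$$
\sum \;\prod_{i=0}^{k}\sharp N(o,\len(u_i))\;\prod_{i=1}^{k}\sharp\big(N(o,\len(v_i))\cap \mathcal V_{\epsilon, M, P}\big).
$$
Using purely exponential growth $\sharp N(o,r)\asymp e^{\e G r}$ (which holds since an SCC action has PEG) and the growth-tightness bound $\sharp(N(o,r)\cap\mathcal V_{\epsilon, M, P})\prec e^{\omega r}$ with $\omega<\e G$ from Theorem \ref{GrowthTightThm}, each term is $\prec \exp\big(\e G\sum\len(u_i) + \omega\sum\len(v_i)\big)\prec \exp\big(\e G n - (\e G-\omega)\cdot(\theta/2)n\big)$. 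The number of choices of combinatorial data is at most polynomial in $n$ for each fixed $k$, and $k$ ranges over $\{0,1,\dots,\lfloor n/L\rfloor\}$; the total number of compositions of $n$ into at most $2k+1$ parts is bounded by $2^{O(n/L\cdot\log n)}$ — this is the one place that needs care. I expect the main obstacle to be controlling this combinatorial overcount: one must either take $L$ large enough that $\binom{n}{n/L}\sim e^{o(n)}$ (which fails) or, better, absorb the gap elements $u_i$ more cleverly — group \emph{all} the gaps into a single factor by noting they contribute a \emph{sub}exponential-in-count constraint, i.e. bound the count by $\sum_{0\le p\le (1-\theta/2)n}\sharp N(o,p)\cdot \sharp\{\text{barrier-free-part of length } n-p\}$ where the latter is itself estimated by iterating the one-interval bound only $O(1/\theta)$ times after merging short intervals. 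Choosing $L=L(\theta)$ so that every interval in $\mathbb K$ has length $\ge L$ forces $k\le 1/\theta\cdot(\text{const})$ is false in general, so the honest fix is: merge consecutive barrier-free intervals whose gap is short; after merging, either few long intervals remain (bounded $k$, polynomial overcount, done) or the gaps account for a definite fraction of $n$ while still leaving $\ge(\theta/2)n$ barrier-free, and one applies Lemma \ref{ThreeDecompTight} with the single largest barrier-free block, whose length is $\ge (\theta/2)n/k \ge$ a definite fraction once $k$ is bounded. Thus after this bookkeeping the estimate gives $\e{\mathcal V^{\theta,L}_{\epsilon,M,P}}\le \e G-(\e G-\omega)\theta/2<\e G$, proving growth tightness.
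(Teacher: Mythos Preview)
Your setup matches the paper's proof almost exactly: decompose $g$ along the barrier-free intervals into an alternating product, bound the barrier-free factors using Theorem \ref{GrowthTightThm}, bound the gap factors using purely exponential growth, and then control the combinatorial overcount from the placement of the intervals. You also correctly identify that the crux is this last overcount. The gap is that you then dismiss precisely the estimate that works.

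You write that one would need $\binom{n}{n/L}\sim e^{o(n)}$ and that this ``fails''. But nothing in the argument requires the configuration count to be subexponential; it only needs to have exponential rate strictly smaller than the gain $(\e G-\omega)\,\theta/2$ coming from the barrier-free pieces. Stirling's inequality gives
\[
\binom{n}{m}\le \Big(\frac{en}{m}\Big)^{m}\le (eL)^{n/L}\qquad(m\le n/L),
\]
and together with the multiplicative constants $\lambda^{2m}\le\lambda^{2n/L}$ arising from $2m$ applications of $\sharp N(o,r)\le \lambda e^{\e G r}$ and $\sharp(N(o,r)\cap\mathcal V_{\epsilon,M,P})\le \lambda e^{\omega r}$, the total overcount is at most
\[
\big(eL\lambda^{2}\big)^{n/L}=\exp\!\Big(\tfrac{n}{L}\log(eL\lambda^{2})\Big).
\]
The rate $\tfrac{1}{L}\log(eL\lambda^{2})$ tends to $0$ as $L\to\infty$, so for $L=L(\theta)$ large enough it is smaller than $(\e G-\omega)\,\theta/2$, and growth tightness follows. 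This is exactly how the paper finishes the proof.

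Your proposed alternatives are both unnecessary and not correct as stated. Merging consecutive intervals does not force $k$ to be bounded (there is no lower bound on the gaps), and the single longest barrier-free interval need not occupy a definite fraction of $n$ (the $\theta n$ of barrier-free length can be spread over $\sim n/L$ intervals of length $\sim L$), so Lemma \ref{ThreeDecompTight} does not apply to one block. The right move is the direct one you abandoned: the configuration count is genuinely exponential for each fixed $L$, but its rate can be made as small as you like by choosing $L$ large.
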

\begin{proof}
{Consider an element  $g\in \mathcal V^{\theta, L}_{\epsilon, M, P}$ so that $[o, go]$ cumulatively spends at least  $\theta$-percentage of time in $N_M(Go)$ with each stay having length at least $L$ and each stay being $(\epsilon, P)$-barrier-free}. Then the number $m=\sharp \mathbb K$ of stays is at most $\theta n/L.$

Let us denote  the endpoints of the $i$-th interval in $\mathbb K$     by $x_i, y_i$ for $1\le i\le m$ so that there exist elements $g_i, h_i\in G$ {with the property that} $$d(g_io, x_i), \;d(h_io, y_i)\le M.$$
By definition, we have $g_i^{-1}h_i\in \mathcal V_{\epsilon, M,  P}$. Let   $\omega_1>0$ be the growth rate of $\mathcal V_{\epsilon, M, P}$, which {by Theorem \ref{GrowthTightThm}} is strictly less than $\e G$. For some $\lambda_1>0$, we have $$\sharp \mathcal V_{\epsilon, M,  P}\cap N(o, n) \le \lambda_1 \exp(n\omega_1).$$
By Theorem B in \cite{YANG11}, there exists $\lambda_2>0$ such that the following holds for any proper action with a contracting element:
$$
\sharp N(o, n)\le \lambda_2 \exp(n\e G)$$
for $n\ge 1$.

Keeping the positions at $x_i, y_i$ as fixed along a geodesic of length $n$, the number of elements $g$ is {bounded above} by $$\lambda^{2m}\cdot \exp(n(1-\theta)\e G)\cdot \exp(n \theta  \omega_1)$$
where $\lambda>1$ is a constant  depending only on $\lambda_i$ and $M$.
Indeed, the second factor comes from the product of the number of elements in balls of radius $d(g_io, h_{i+1}o)\le d(y_i, x_{i+1})+2M$, and the third factor is the product of barrier-free elements $g_i^{-1}h_i$ corresponding to $x_i, y_i$.  The coefficient $\lambda^{2m}$ takes into account the constant $\lambda_i, M$ in $2m$ multiplications.  

Fixing $m\le n\theta/L$   and varying $x_i, y_i$,   the number of configurations is at most $C^{m}_n$, {bounded above} by 
$$\big(\frac{en}{m}\big)^{m } \le \big(\frac{eL}{\theta}\big)^{n\theta/L},$$
which follows from Stirling's formula.

Let $\omega_2 =\e G-\omega_1>0$. We deduce that  $\sharp \mathcal V^{\theta, L}_{\epsilon, M, P}$ is {bounded above} by   
\begin{align*}
 &n\theta/L\cdot \big (\frac{eL}{\theta}\big)^{n\theta/L} \cdot \lambda^{2m}\cdot \exp (-n\theta\omega_2 )\cdot \exp (n\e G)  \\
 \le  & n\theta/L\cdot \big(\frac{e L}{\theta} \lambda^{2}\big)^{n\theta/L}\cdot \exp (-n\theta\omega_2 )\cdot  \exp (n\e G)
\end{align*}
For given $\theta$, if $L$ is large enough, the factor $(\frac{e L}{\theta} \lambda^{2})^{n\theta/L}$ is little-o of $\exp(\sigma n)$ for some $\omega_2>\sigma>0$. Hence, the product of the first three factors is of order $\exp((\sigma-\omega_2) n)$. The growth tightness of $\mathcal V^{\theta, L}_{\epsilon, M, P}$ follows as desired.
\end{proof}

We now derive a corollary of Theorem \ref{FGrowthTightThm} in a specific setting.
Let $\theta\in (0, 1]$ and $L>0$. Given $g\in G$, consider    the set $\mathbb K$ of maximal connected components $\alpha$ of length at least $L$ in the intersection of $[o, go]$ {with the complement of $N_M(Go)$}. Let   $\mathcal O_{M}^{\theta, L}$ denote the set of elements $g$ with the following property:  
$$
 \sum_{\alpha\in \mathbb K} \len(\alpha)\ge \theta d(o, go).
$$

\begin{cor}\label{LargePercentGeneric}
For any $\theta\in (0, 1]$ there exists $L=L(\theta)>0$ such that 
$\mathcal O_{M}^{\theta, L}$ is a growth tight set.
\end{cor}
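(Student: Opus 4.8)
The plan is to run the proof of Theorem \ref{FGrowthTightThm} essentially verbatim, with the set $\mathcal O_M=\mathcal O_{M,M}$ playing the role of the barrier-free set $\mathcal V_{\epsilon,M,P}$. The one ingredient that makes this work is that $\mathcal O_M$ is a growth tight set, i.e. $\e{\mathcal O_M}<\e G$; but this is precisely the hypothesis that the action is SCC (Definition \ref{StatConvex}), taking $M=M_2$ and using $\mathcal O_{M_2,M_2}\subset\mathcal O_{M_1,M_2}$. Thus the only genuinely new point is the elementary geometric observation that the long excursions of $[o,go]$ out of $N_M(Go)$ are recorded by elements of $\mathcal O_M$.

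In detail, I would first fix $g\in\mathcal O_{M}^{\theta,L}$, set $n=d(o,go)$, and let $\mathbb K=\{\alpha_1,\dots,\alpha_m\}$ be the maximal components of length $\ge L$ of $[o,go]\cap(\mathrm Y\setminus N_M(Go))$, indexed in the order in which they meet $[o,go]$. These are pairwise disjoint subsegments of $[o,go]$ of length $\ge L$, so $m\le n/L$, and by hypothesis $P:=\sum_i\len(\alpha_i)\ge\theta n$. Let $x_i,y_i$ be the endpoints of $\alpha_i$. Since $o,go\in N_M(Go)$, each $x_i,y_i$ lies in $N_M(Go)$ (in fact on its boundary), so one can pick $g_i,h_i\in G$ with $d(g_io,x_i)\le M$ and $d(h_io,y_i)\le M$. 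Then $[x_i,y_i]$ is a geodesic joining $B(g_io,M)$ to $B(h_io,M)$ whose interior lies outside $N_M(Go)$, so $g_i^{-1}h_i\in\mathcal O_M$, with $d\big((g_i^{-1}h_i)o,o\big)\le\len(\alpha_i)+2M$. This is the only step not already contained in the proof of Theorem \ref{FGrowthTightThm}.

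The remaining argument is the same counting. Fixing the positions of the $2m$ points $x_i,y_i$ along $[o,go]$, the telescoping identity
$$
g=g_1\cdot(g_1^{-1}h_1)\cdot(h_1^{-1}g_2)\cdot(g_2^{-1}h_2)\cdots(g_m^{-1}h_m)\cdot(h_m^{-1}g)
$$
writes $g$ as a product of $m$ ``piece'' factors $g_i^{-1}h_i\in\mathcal O_M\cap N(o,\len(\alpha_i)+2M)$ and $m+1$ ``gap'' factors lying in balls of radius the complementary lengths (plus $2M$ each). Using $\sharp N(o,k)\asymp\exp(\e G k)$ (purely exponential growth) for the gaps, $\sharp(\mathcal O_M\cap N(o,k))\prec\exp(\omega_1 k)$ with $\omega_1:=\e{\mathcal O_M}<\e G$ for the pieces, and $P\ge\theta n$, the number of $g$ with these endpoint positions fixed is at most $\Lambda^m\exp(\e G(n-P))\exp(\omega_1P)\le\Lambda^m\exp(\e Gn)\exp(-(\e G-\omega_1)\theta n)$, where $\Lambda>1$ is a fixed constant absorbing the implied constants, the $M$-corrections, and the bounded ambiguity in the choices of the $g_i,h_i$.

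Finally I would sum over the $\le n$ values of $m\le n/L$ and the $\le\binom{n+1}{2m}$ choices of endpoint positions, after rounding the parameters to an integer grid (which only affects constants). Since $m\le n/L$, Stirling gives $\binom{n+1}{2m}\le(eL/2)^{2n/L}$, and together with $\Lambda^m$ this is of the form $\exp(c(L)n/L)$ for a constant $c(L)$ with $c(L)/L\to0$ as $L\to\infty$. Hence, choosing $L=L(\theta)$ large enough that $c(L)/L<(\e G-\omega_1)\theta/2$, one obtains $\sharp(\mathcal O_{M}^{\theta,L}\cap N(o,n))\prec n\exp\big((\e G-\delta)n\big)$ with $\delta=(\e G-\omega_1)\theta/2>0$, so $\e{\mathcal O_{M}^{\theta,L}}\le\e G-\delta<\e G$. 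The hard part — beating the number of configurations by enlarging $L$ and splitting off the growth-tight factors — is already carried out in Theorem \ref{FGrowthTightThm}; I expect the only places needing a little care here to be the routine verification that the endpoints of a complement component lie within $M$ of $Go$ (which uses only that $N_M(Go)$ is closed and $Go$ is a closed orbit of a proper action) and the passage to an integer grid in the configuration count.
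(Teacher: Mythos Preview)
Your argument is correct, but it takes a different route from the paper's proof. The paper does \emph{not} rerun the counting from Theorem \ref{FGrowthTightThm}; instead it reduces Corollary \ref{LargePercentGeneric} to Theorem \ref{FGrowthTightThm} by showing an inclusion $\mathcal O_{M}^{\theta,L}\subset \mathcal V^{\theta,L}_{\epsilon,M,f^n}$ for some large $n$. Concretely, it fixes a contracting element $f$ and checks that every maximal component $\alpha\subset [o,go]$ outside $N_M(Go)$ is $(\epsilon,f^n)$-barrier-free once $d(o,f^no)$ is large enough: if $\alpha$ had an $(\epsilon,f^n)$-barrier $t$, the contracting property would force $\alpha$ to meet $N_C(t\ax(f))\subset N_C(Go)\subset N_M(Go)$, contradicting the choice of $\alpha$. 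Then Theorem \ref{FGrowthTightThm} applies directly.

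By contrast, you bypass barriers entirely: you observe that the endpoints of each excursion $\alpha_i$ lie in $N_M(Go)$, so the associated element $g_i^{-1}h_i$ belongs to $\mathcal O_M$, and you feed the growth-tight set $Z=\mathcal O_M$ (growth tight by the very definition of SCC) into the same combinatorial counting that underlies Theorem \ref{FGrowthTightThm}. This is arguably more transparent, since it uses the SCC hypothesis $\e{\mathcal O_M}<\e G$ directly rather than via Theorem \ref{GrowthTightThm}, and it shows that the statement of Theorem \ref{FGrowthTightThm} really only needs \emph{some} growth-tight set controlling the long intervals, not specifically the barrier-free set. The paper's approach, on the other hand, is shorter to write and emphasizes the unifying role of barriers. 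Both are valid; your verification that $g_i^{-1}h_i\in\mathcal O_M$ (via $G$-invariance of $N_M(Go)$ and the translated geodesic $g_i^{-1}[x_i,y_i]$) is the correct replacement for the paper's barrier-free observation.
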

\begin{proof}
Fix a contracting element $f\in G$. To apply Theorem \ref{FGrowthTightThm}, it suffices to verify that each component $\alpha$ in $\mathbb K$ defined as above is $(\epsilon, f^n)$-barrier-free for some $n\gg 0$.   Indeed, if not, assume that $\alpha$ contains an $(\epsilon, f^n)$-barrier $t\in G$  for any  large $n$. By the contracting property, we see that if $n$ is chosen so that $d(o, f^no)$ is large enough, then $\alpha$ intersects a uniform $C$-neighborhood of the contracting set $t\ax(f)$, where $C$ is the contraction constant of $t\ax(f)$.  However, the component $\alpha$ is disjoint from $N_M(Go)$ by definition, so we have a contradiction when $M\ge C$. Hence, $\alpha$ is $(\epsilon, f^n)$-barrier-free for some $n\gg 0$. The proof is complete.
\end{proof}

\subsection{Upper bound for strongly primitive conjugacy classes}

As applications of previous results, we establish the desired upper bound on the number of strongly primitive conjugacy classes.
 
We fix a choice of constants $\theta\in (0, 1], L=L(\theta)>0$ so that Corollary \ref{LargePercentGeneric} holds. Thus, there exists an exponentially large set of elements $g\in G$ such that at least $(1-\theta)$-proportion of $[o, go]$ lies in the $(M+L/2)$-neighbourhood of $Go$. In other words, the cumulative stay  of $[o, go]$ in $N_{M+L/2}(Go)$ takes up at least $(1-\theta)$-percentage of the whole segment $[o, go]$.


\begin{lem} \label{ThickElements}
There exist an exponentially generic set $\mathcal G$ of elements   and $\theta, K>0$ with the following property. 
If $g \in \mathcal G$ is a strongly primitive contracting element then $[g]$ contains at least $\theta \cdot \tau[g]$ elements   in $N(o, \tau[g]+K)$.
\end{lem}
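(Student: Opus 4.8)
The plan follows the cyclic‑conjugation argument of Coornaert and Knieper \cite{CoK1}, adapted to the non‑cocompact case by means of the ingredients of \textsection\ref{Section2}--\textsection\ref{Section5}. Fix a contracting element $f$, let $\mathbb X=\{g\ax(f):g\in G\}$, and take $\mathcal G$ to be the intersection of the following exponentially generic sets: the contracting elements (Lemma \ref{TightConjugacyBarrier}); the elements admitting a periodic admissible path over $\mathbb X$ with uniform constants, hence a stable $R$‑axis $\tax(g)$ with $\tax(g)\subset Go$ by Lemma \ref{LengthPeriAdmPath}(1) (exponentially generic by Theorem \ref{LinearDrift}); and the elements for which the finite kernel $F$ of $E^+(g)\stackrel{\phi}{\to}\mathbb Z$ satisfies $\sharp F\le N$ (Lemma \ref{UnifKernel}). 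Let $g\in\mathcal G$ be strongly primitive. By Lemma \ref{StableAxisLength} choose $x\in\tax(g)\subset Go$ with $d(x,gx)\le\tau[g]+2R$ and $h\in G$ with $ho=x$; then $g^\ast:=h^{-1}gh$ is a strongly primitive contracting conjugate of $g$ with $d(o,g^\ast o)=d(x,gx)\le\tau[g]+2R$. The first point is that $[o,g^\ast o]$ lies in a \emph{uniform} neighbourhood $N_M(Go)$: its endpoints $o=1\cdot o$ and $g^\ast o=g^\ast\cdot o$ lie on $\ax(g^\ast)=E(g^\ast)o\subset Go$, which for $g\in\mathcal G$ — this is where the bound $\sharp F\le N$ enters, bounding $[E(g^\ast):\langle g^\ast\rangle]$ and hence the displacement of $o$ by the finitely many coset representatives — is a uniformly bounded thickening of a uniform contracting quasi‑geodesic, so by quasi‑convexity (Proposition \ref{Contractions}) the geodesic $[o,g^\ast o]$ stays within a uniform neighbourhood of $\ax(g^\ast)$, hence of $Go$.

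Fix a large uniform $\ell_0>2M$ and subdivide $[o,g^\ast o]$ into $m$ consecutive subsegments of length in $[\ell_0,2\ell_0]$ whose endpoints have pairwise distinct distances to $o$. Each endpoint lies in $N_M(Go)$, so we may choose $u_0=1,u_1,\dots,u_m=g^\ast$ in $G$ with $u_jo$ within $M$ of the $j$‑th division point; then $g^\ast=v_1\cdots v_m$ with $v_j=u_{j-1}^{-1}u_j$, the cyclic shifts $g_j:=u_{j-1}^{-1}g^\ast u_{j-1}$ $(1\le j\le m)$ are conjugate to $g$, and $m\ge d(o,g^\ast o)/(2\ell_0)\ge\tau[g]/(2\ell_0)$ since $\tau[g]=\tau(g^\ast)\le d(o,g^\ast o)$ by subadditivity. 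If $p_{j-1}\in[o,g^\ast o]$ is within $M$ of $u_{j-1}o$, then $d(p_{j-1},g^\ast p_{j-1})\le d(p_{j-1},g^\ast o)+d(g^\ast o,g^\ast p_{j-1})=d(o,g^\ast o)$, whence $d(o,g_jo)=d(u_{j-1}o,g^\ast u_{j-1}o)\le 2M+d(o,g^\ast o)\le\tau[g]+(2M+2R)=:\tau[g]+K$. Thus $g_1,\dots,g_m$ are $m\gtrsim\tau[g]$ elements of $[g]$ lying in $N(o,\tau[g]+K)$, counted with multiplicity.

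It remains to count distinct values. One has $g_i=g_j$ if and only if $u_{i-1}u_{j-1}^{-1}\in C_G(g^\ast)$, and then $u_{i-1}o$ and $u_{j-1}o$ lie in a common orbit of $C_G(g^\ast)$; hence the number of distinct $g_j$ is at least the number of $C_G(g^\ast)$‑orbits represented among $u_0o,\dots,u_mo$. As $g^\ast$ is strongly primitive, $C_G(g^\ast)=C_F(g^\ast)\times\langle g^\ast\rangle$ is a union of $\le\sharp F\le N$ cosets of $\langle g^\ast\rangle$. For $w=uo\in Go$ and $n\in\mathbb Z$ one has $d(w,g^{\ast n}w)=d(o,(u^{-1}g^\ast u)^no)\ge|n|\,\tau(g^\ast)=|n|\,\tau[g]$ by subadditivity and conjugacy‑invariance, so a single $\langle g^\ast\rangle$‑orbit meets the ball $B(o,\tau[g]+M+2R)$ — which contains all the pairwise distinct points $u_0o,\dots,u_mo$ — in at most $4$ points once $\tau[g]$ is large; hence any $C_G(g^\ast)$‑orbit contains at most $4N$ of them, so at most $4N$ indices $j$ yield the same $g_j$. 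Therefore $[g]$ contains at least $m/(4N)\ge\tau[g]/(8N\ell_0)=:\theta\,\tau[g]$ distinct elements of $N(o,\tau[g]+K)$, as required. The principal obstacle is uniformity: both the thickness constant $M$ and the multiplicity bound $4N$ hinge on the uniform torsion bound $\sharp F\le N$, without which the count of distinct conjugates would be divided by $\sharp F$, which may be arbitrarily large; securing that bound is precisely the role of \textsection\ref{Section6} (Lemma \ref{UnifKernel}), whose proof uses the projection‑complex results of \cite{BBFS}.
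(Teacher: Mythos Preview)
Your argument has a real gap at the step claiming $[o,g^\ast o]\subset N_M(Go)$ for a \emph{uniform} $M$. The reason you give --- that $\sharp F\le N$ bounds $[E(g^\ast):\langle g^\ast\rangle]$ and ``hence the displacement of $o$ by the finitely many coset representatives'' --- is simply false: an index bound says nothing about how far coset representatives move the basepoint. But the problem is deeper than this mis-justification. Even granting that the conjugated periodic admissible path $h^{-1}\gamma$ is a uniform contracting quasi-geodesic through $o$ and $g^\ast o$, quasi-convexity only tells you that $[o,g^\ast o]$ stays close to $h^{-1}\gamma$; it does \emph{not} tell you that $h^{-1}\gamma$ stays close to $Go$. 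The admissible path is a concatenation of geodesic segments $p_i$, $q_i$ in $\mathrm Y$, and while the $p_i$ lie near contracting sets in $\mathbb X\subset Go$, the connecting segments $q_i$ are just geodesics in $\mathrm Y$ between two orbit points and may spend most of their length outside any fixed $N_M(Go)$. This is exactly the phenomenon measured by the set $\mathcal O_M$ in Definition \ref{StatConvex}, and is the whole reason the lemma is nontrivial for non-cocompact actions.

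The paper does not attempt to place all of the segment in $N_M(Go)$. It invokes Corollary \ref{LargePercentGeneric} (which you do not use) to pass to an exponentially generic set on which a fixed \emph{fraction} $(1-\theta)$ of $[o,go]$ lies in $N_{M+L/2}(Go)$, then plots linearly many division points $x_i$ along that thick part and runs the cyclic-shift argument on those. For distinctness of the shifts the paper does not use $\sharp F\le N$ either: if two shifts coincide then their ``difference'' $t$ commutes with a conjugate $h$ of $g$, so $t=h^kf$ with $f\in F$, and comparing $\tau(t)=|k|\tau(h)$ against $d(o,to)\le d(o,go)$ together with $\tau(h)\approx d(o,go)$ from Proposition \ref{StablePointedLength} yields the contradiction. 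Thus the paper's proof of this lemma is entirely independent of \textsection\ref{Section6}; your route imports Lemma \ref{UnifKernel} for both the thickness step and the distinctness step, but it is irrelevant to the former (which remains unjustified) and unnecessary for the latter.
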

\begin{proof}
Let $\mathcal G$ be the set of elements $g\in G$ such that $g$ satisfies         Corollary \ref{GenericSetTwoLengths} and  does not belong to $\mathcal O_M^{\theta, L}(Go)$. Thus,  $\mathcal G$ is exponentially generic since it is the intersection of two such sets. 

Assume that  $g$  is strongly primitive  and  minimal in $[g]$. The next three paragraphs can be ignored, if the reader is interested only in the case of a co-compact action.

As discussed above, for $g\notin  \mathcal O_M^{\theta, L}(Go)$, at least $(1-\theta)n$-proportion of $[o, go]$ is contained in $N_{M+L/2}(Go)$. We plot inductively a sequence of  points  with step of length at least $2M$   so that the the total number $m$ of points is less than $(1-\theta)n/2M$.  

Precisely, from left to right, choose $x_0=o$  to start. If the point on $[o, go]$  with an exact distance $2M$ {from $x_0$ lies in $N_{M+L/2}(Go)$, let $x_1$ be this point; otherwise choose $x_1$ to be the closest to $x_0$ {among points in $N_{M+L/2}(Go)$ satisfying $d(x_1, x_0)\ge 2M$}}. We do this repeatedly so that $d(x_{i+1}, x_i)\ge K$ and $x_i\in N_{M+L/2}(Go)$, until the terminal $go$ is within $2M$-distance of $x_{m-1}$. Finally, set $x_m=go$. 

Observe  that $m\ge (1-\theta) n/{4M}$. {Indeed, the union of $2M$-neighborhoods of $x_i$'s covers $[o, go]\cap N_{M+L/2}(Go)$ so the lower bound on $m$ follows.}

We have thus subdivided $[o, go]$ into segments of length at least $2M$. Thus, for some $\theta'=\theta'(\theta)>0$, there exists $m=\theta' n$ elements $h_i$ such that $d(h_io,x_i)\le M+L/2$ and $d(h_io, h_{i+1}o)\ge 2M$. We can then write $g=s_1s_2\cdots s_m$ as a product of elements $s_i=h_i^{-1}h_{i+1}$ for $1\le i\le m$. 

The remainder of the proof is to show that all $m$ cyclic permutations of the word $s_1s_2\cdots s_m$ give  different elements $g_i=s_{i+1}\cdots s_n s_1\cdots s_i$ with length bounded by $d(o, g_io)\le d(o, go)+2M$. Since the strong primitivity is   a  conjugacy invariant and $g_i$ is conjugate to $g$, we have that each $g_i$ is   strongly primitive. 

First of all, we verify the upper bound: 
\begin{align*}
d(o, g_io)&\le d(o, s_1\cdots s_i o)+d(o, s_{i+1}\cdots s_n o)\\
&\le d(o, x_i)+M+d(x_i, go)+M\\
&\le d(o, go)+2M.
\end{align*}

Now it remains to prove that $g_i\ne g_j$ for $i \ne j$.   To derive a contradiction, assume that $i<j$ and $g_i=g_j=h$. Denoting $t=s_{i+1}\cdots s_j$, we have $ht= g_i t= t g_j=th$, so $t\in E^+(h)$ by Corollary \ref{EPlush}. The strong primitivity of $h$ then allows to write $t=h^k f$, where  $|k|>1$ and $f$ belongs to a finite normal subgroup $F$ of $E^+(h)$.  Thus, there exists some $n>0$ such that $t^n=h^{nk}$, so  $\tau[t]=|k|\tau [h]$ for $|k|\ge 2$.   

The stable length is a conjugacy invariant, so $\tau[h]=\tau[g]$. Since $g\in \mathcal G$ satisfies Proposition \ref{StablePointedLength}, there exists $K_0>0$ such that $$|\tau[g]-d(o, go)|\le K_0$$ yielding   $$\tau[t] \ge |k| (d(o, go)-K_0).$$

Recalling that $t=s_{i+1}\cdots s_j$, we obtain that 
\begin{align*}
\tau[t]&\le d(o, to) \le d(h_io, h_{j+1}o)\le d(x_i, x_{j+1})+2M \\
& \le d(o, go)-2M+2M\le d(o, go).
\end{align*}
This gives a contradiction when $d(o, go)$ is large enough. Accordingly, all cyclic permutations $g_i$ are distinct: there are at least $m=\theta' n$ elements in $N(o, d(o, go)+2M)$, where $\theta'$ is a uniform number. The lemma is thus proved.
\end{proof}

We also need the following lemma.
 
\begin{lem} \cite[Lemma 3.2]{CoK2}\label{SumExpOn}
 Let $\omega>0$. Then there exists $C>0$ such that 
 $$
\sum_{0\le i\le n} \frac{\exp(i \omega)}{i} \le C\frac{\exp(n \omega)}{n}. 
 $$
\end{lem}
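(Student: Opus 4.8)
The plan is to prove the (harmless) reformulation of the stated inequality in which the sum runs over $1\le i\le n$, the undefined $i=0$ term being excluded. The guiding intuition is that the summand $\exp(i\omega)/i$ grows essentially geometrically in $i$, so the whole sum is comparable to its last term $\exp(n\omega)/n$; one only needs to check that the ``head'' of the sum does not spoil this. Accordingly I would split the sum at $i=\lceil n/2\rceil$ and estimate the two pieces separately, with all constants uniform in $n\ge 1$ and in how small $\omega$ is.

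For the tail $\lceil n/2\rceil\le i\le n$ one has $1/i\le 2/n$, hence
\[
\sum_{i=\lceil n/2\rceil}^{n}\frac{\exp(i\omega)}{i}\le \frac{2}{n}\sum_{i=0}^{n}\exp(i\omega)=\frac{2}{n}\cdot\frac{\exp((n+1)\omega)-1}{\exp(\omega)-1}\le \frac{2\exp(\omega)}{\exp(\omega)-1}\cdot\frac{\exp(n\omega)}{n},
\]
which is already of the desired shape with constant $2\exp(\omega)/(\exp(\omega)-1)$.

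For the head $1\le i\le \lceil n/2\rceil-1$ a crude bound suffices: $\exp(i\omega)/i\le \exp(i\omega)\le \exp(n\omega/2)$ (since $i\le n/2$ in this range), and there are fewer than $n/2$ such terms, so the head is at most $\tfrac{n}{2}\exp(n\omega/2)$. Because the exponential beats the polynomial for every fixed $\omega>0$, the quantity $C_{0}:=\sup_{n\ge 1} n^{2}\exp(-n\omega/2)$ is finite, and therefore $\tfrac{n}{2}\exp(n\omega/2)\le \tfrac{C_{0}}{2}\cdot \exp(n\omega)/n$. Adding the two estimates gives the claim with $C=2\exp(\omega)/(\exp(\omega)-1)+C_{0}/2$.

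I do not expect a genuine obstacle here: the statement is entirely elementary, and the only points deserving a word of care are the exclusion of the undefined $i=0$ term, the requirement that the bound be uniform down to $n=1$ (handled by the finiteness of $C_{0}$), and the behaviour for $\omega$ close to $0$ (the split-at-$n/2$ device costs only a multiplicative constant, independent of $\omega$). As an alternative one could argue by induction on $n$ from $S_{n}=S_{n-1}+\exp(n\omega)/n$ together with the inductive bound $S_{n-1}\le C\exp((n-1)\omega)/(n-1)$; this reduces to checking $C\,\tfrac{n}{n-1}\exp(-\omega)\le C-1$, which holds once $n\ge N$ with $N$ chosen so that $\tfrac{N}{N-1}\exp(-\omega)<1$ and $C\ge (1-\tfrac{N}{N-1}\exp(-\omega))^{-1}$, the finitely many remaining cases $n<N$ being absorbed by enlarging $C$. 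In any case this is the (standard) estimate of \cite[Lemma 3.2]{CoK2}, so it may simply be quoted.
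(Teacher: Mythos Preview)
Your argument is correct. The paper itself gives no proof of this lemma: it is stated with the citation \cite[Lemma 3.2]{CoK2} and used as a black box, so there is no approach in the paper to compare against. Your split-at-$\lceil n/2\rceil$ estimate (and the alternative inductive scheme you sketch) supplies exactly the elementary details behind the quoted result; the only cosmetic point is the one you already flag, namely that the $i=0$ term in the displayed sum is ill-defined and should be read as starting at $i=1$.
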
 

Denote by $\mathcal {C''}(n)$ the set of conjugacy classes of strongly primitive  contracting elements of stable length at most $n$.
 
\begin{cor}\label{PrimConjUpBnd}
Assume   the action $G \curvearrowright \mathrm Y$ is SCC. Then $$\sharp \mathcal {C''}(n)\cap \mathcal C(o, n)\prec \displaystyle\frac{\exp(\e Gn)}{n}.$$ 
\end{cor}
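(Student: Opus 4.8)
The plan is to combine Lemma \ref{ThickElements} with the purely exponential growth estimate, essentially following the counting argument of Coornaert--Knieper \cite{CoK2}. First I would fix the exponentially generic set $\mathcal G$ and constants $\theta, K>0$ furnished by Lemma \ref{ThickElements}, and note that by Corollary \ref{GenericSetTwoLengths} (together with Lemma \ref{TightConjugacyBarrier}) it suffices to bound the number of conjugacy classes $[g]$ of strongly primitive contracting elements $g \in \mathcal G$ with $0 < \tau[g] \le n$; the complement contributes negligibly and in any case cannot exceed $\sharp N(o,n) \asymp \exp(\e G n)$, which is dominated by $n \cdot \exp(\e G n)/n$. (Here I use Lemma \ref{StrongPrimitive} to pass from primitive to strongly primitive up to the bounded-torsion issue, which will be fully dealt with in Section \ref{Section6}, so for this corollary I may simply count strongly primitive ones and absorb the rest later, or cite it as forthcoming.)

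The core estimate: for each $m \le n$, partition the strongly primitive $g \in \mathcal G$ with $\tau[g]$ in the range roughly $[m, m+1)$ into conjugacy classes. By Lemma \ref{ThickElements}, each such class $[g]$ contains at least $\theta \tau[g] \ge \theta' m$ distinct elements lying in $N(o, \tau[g] + K) \subset N(o, m + K + 1)$. Hence, double-counting elements of $N(o, m+K+1)$ across these classes,
\begin{equation*}
\sharp\{[g] : g \in \mathcal G \text{ strongly primitive}, \ m \le \tau[g] < m+1\} \cdot \theta' m \le \sharp N(o, m+K+1) \prec \exp(\e G m).
\end{equation*}
Therefore the number of such classes with $\tau[g] < m+1$ is $\prec \exp(\e G m)/m$. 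Summing over $0 \le m \le n$ and invoking Lemma \ref{SumExpOn} with $\omega = \e G$ gives
\begin{equation*}
\sharp \mathcal{C'}(n) \cap \mathcal C(o,n) \cap [\mathcal G] \prec \sum_{0 \le m \le n} \frac{\exp(\e G m)}{m} \prec \frac{\exp(\e G n)}{n},
\end{equation*}
and adding the negligible contribution from $G \setminus \mathcal G$ (bounded by $\exp(\e G n) = n \cdot \exp(\e G n)/n$, but in fact exponentially smaller, hence harmless) completes the bound.

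The main obstacle is making sure the bookkeeping between $\tau[g]$ and $\ell_o[g]$ is clean: a conjugacy class is counted in $\mathcal C(n) \cap \mathcal C(o,n)$ via its stable length, but Lemma \ref{ThickElements} produces many elements of bounded \emph{pointed} length (distance to $o$), so I need Corollary \ref{GenericSetTwoLengths} to know $|\ell_o[g] - \tau[g]| \le B$ on the generic set, ensuring the $\theta' m$ elements really do sit in a ball of radius $m + O(1)$ and that each conjugacy class with $\tau[g] \le n$ also has $\ell_o[g] \le n + O(1)$ so it is genuinely in range. A secondary subtlety is that $\mathcal G$ is only generic, not all of $G$, so strictly the argument bounds $\sharp(\mathcal{C'}(n) \cap \mathcal C(o,n) \cap [\mathcal G])$; but since the complement of $\mathcal G$ in $G$ is exponentially negligible and $\sharp N(o,n) \asymp \exp(\e G n)$, the classes meeting only $G \setminus \mathcal G$ number at most $o(\exp(\e G n))$, which is far below $\exp(\e G n)/n$ up to the constant — so no loss. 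I would state these reductions explicitly at the start and then carry out the short double-counting plus Lemma \ref{SumExpOn} computation.
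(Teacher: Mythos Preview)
Your proposal is correct and follows essentially the same route as the paper: partition primitive conjugacy classes by stable length into annuli, apply Lemma \ref{ThickElements} to get $\asymp m$ distinct representatives of each class inside $N(o,m+K)$, divide by purely exponential growth, and then sum via Lemma \ref{SumExpOn}. The paper's proof is simply a terser version of exactly this double-counting argument; you are more explicit about the reductions (restricting to the generic set $\mathcal G$, to contracting elements, and to strongly primitive ones) that the paper leaves implicit, and your handling of the complement of $\mathcal G$ via exponential negligibility is the right way to make those reductions rigorous.
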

\begin{proof}
Define $$\mathcal A''(i, \Delta)=\{[g]\in \mathcal C(o, n)\cap  \mathcal {C''}(n): |\tau[g]-i|\le \Delta\}$$ for $0\le i\le n, \Delta\ge 0$. 
By Lemma \ref{ThickElements}, each  class $[g]\in \mathcal A''(i, \Delta)$ of strongly primitive contracting elements  contains at least $\theta (i+\Delta)$ elements in $N(o, i+K)$ for some uniform $K>0$. Since $\sharp N(o, i)\asymp \exp(\e Gi)$, we obtain $\sharp \mathcal A''(i, \Delta) \prec  \frac{\exp(\e Gi)}{i}$.   The proof is completed by Lemma \ref{SumExpOn}.
\end{proof}

\section{Uniform contracting elements}\label{Section6}

We now discuss the second ingredient to deal with torsion elements   in the kernel of the following exact sequence
$$
1\to F\to E^+(g)\to \mathbb Z\to 1
$$ 
The main technical result of independent interest is that an {exponentially} large set of contracting elements have a uniform bound on the kernel $F$.  This shall be proved using Bestvina-Bromberg-Fujiwara's construction of projection complex.

We point out that the results in this section {is} not necessary, if the action has no torsion elements or there exists a uniform bound on the size of finite subgroups. The latter {condition} is satisfied in large varieties of groups: hyperbolic groups, CAT(0) groups and mapping class groups, to point out a few. However, there exist  examples of relatively hyperbolic groups which have {finite subgroups of unbounded cardinality}.

\subsection{Projection complex}
Let $\mathbb X$ be a collection of uniformly contracting sets with bounded intersection. By \cite{Sisto}, $\mathbb X$ satisfies the projection complex axioms introduced in \cite[Section 3.1]{BBF}. We recall some of their results that will be useful for us.

For a constant $K>0$, the \textit{projection complex}  $\mathcal P_K(\mathbb X)$ is a graph obtained in the following way. They introduced a function $\tilde d_W(X_1, X_2)$ for every     $X_2\ne W\ne X_1\in \mathbb X$, which is variant of  $  d_W(X_1, X_2)=\diam{\pi_W(X_1\cup X_2)}$ so that 
\begin{equation}\label{BBFDistEQ}
d_W(X_1, X_2)\simeq_{\theta} \tilde d_W(X_1, X_2)
\end{equation} where $\theta>0$ depends only on $\mathbb X$. The vertex set consists of all elements in $\mathbb X$, and two distinct vertices $X_1, X_2$ are connected if and only if $\tilde d_W(X_1, X_2)< K$ for every $W\in \mathbb X$. Introduce the interval-like set 
$$\mathbb X_K(V, W)=\{X\in\mathbb X: \tilde d_X(V, W)\ge K\}.$$ Thus, two vertices $X_1, X_2$ are adjacent if and only if $\mathbb X_K(X_1, X_2)=\emptyset$. The basic result in \cite{BBF} is that for a large $K>0$, the  projection complex $\mathcal P_K(\mathbb X)$ is a quasi-tree of infinite diameter on which $G$ acts co-boundly. 
 
 It is proved in \cite[Prop 3.7]{BBF}  that the interval $\mathbb X_K(V, W)$ gives a path in the projection complex between $V$ and $W$ which is not necessarily geodesic. However, if raising $K$ to a large amount, we have the following result. 
\begin{lem}\cite[Lemma 3.18]{BBF} \label{StrongGeod}
For a sufficiently large  $K'$ relative to $K$, all the elements in $\mathbb X_{K'}(V, W)$ indeed appear in any geodesic between $V, W$.
\end{lem}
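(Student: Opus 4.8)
Since this is \cite[Lemma 3.18]{BBF}, the cleanest route is to cite it directly; here I outline the strategy one would carry out to establish it in the present language. Write $\theta$ for the constant in the Behrstock-type inequality attached to the projection system $\mathbb X$ — so that for distinct $X, Y, Z \in \mathbb X$ at most one of $\pi_X(Y, Z)$, $\pi_Y(X, Z)$, $\pi_Z(X, Y)$ exceeds $\theta$ — and let $B$ be the bounded-projection constant, so that $\diam{\pi_Y(X)} \le B$ for $X \ne Y$ in $\mathbb X$. We may assume $K \ge \theta + B$. Fix $V, W$, an element $Y \in \mathbb X_{K'}(V, W)$ with $K'$ to be chosen large, and an arbitrary geodesic $\gamma = (V = W_0, W_1, \dots, W_n = W)$ in $\mathcal P_K(\mathbb X)$; the goal is to show $Y = W_i$ for some $i$, so we argue by contradiction and assume $W_i \ne Y$ for all $i$.

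First I would note that along $\gamma$ the numbers $\rho_i := \pi_Y(V, W_i)$ vary slowly: each edge $W_iW_{i+1}$ of $\mathcal P_K(\mathbb X)$ forces $\pi_Z(W_i, W_{i+1}) < K$ for every vertex $Z$, in particular for $Z = Y$ (using $Y \notin \{W_i, W_{i+1}\}$), so the triangle inequality gives $|\rho_{i+1} - \rho_i| < K$. Since $\rho_0 = \pi_Y(V, V) \le B$ while $\rho_n = \pi_Y(V, W) \ge K'$, the sequence $(\rho_i)_{i=0}^{n}$ climbs from below $B$ to above $K'$ in steps of size less than $K$. The core of the argument is then a \emph{no-backtracking} estimate: applying the Behrstock inequality vertex by vertex along the geodesic, one shows that $(\rho_i)$ admits no deep dip — once it becomes large it stays large — and, more to the point, that if $Y$ never occurs on $\gamma$ then $\rho_i$ is bounded above by a constant $C_0 = C_0(K, \theta, B)$ that does not depend on $n$. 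Taking $K' > C_0$ then contradicts $\rho_n \ge K'$, and this contradiction forces $Y$ onto $\gamma$, which is the assertion.

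I expect the no-backtracking estimate to be the only real obstacle: one must exclude the possibility that a geodesic in $\mathcal P_K(\mathbb X)$ enters the star of $Y$, leaves it, and re-enters. In \cite{BBF} this is handled by comparing $\gamma$ with the \emph{standard path} from $V$ to $W$, namely the linearly ordered enumeration $V, X_1, \dots, X_m, W$ of $\mathbb X_K(V, W)$, which is an edge path by \cite[Prop. 3.7]{BBF}, and by exploiting that $\mathcal P_K(\mathbb X)$ is a quasi-tree: a geodesic fellow-travels the standard path and so inherits its monotone ordering of the vertices of large projection, with those of projection at least $K'$ ending up on every geodesic between $V$ and $W$. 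Since this is exactly the content of \cite[Lemma 3.18]{BBF}, we simply invoke that lemma.
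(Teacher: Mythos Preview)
The paper does not prove this lemma at all; it is stated purely as a citation of \cite[Lemma 3.18]{BBF} and used as a black box. Your proposal correctly recognizes this and ends by invoking that lemma, so it is adequate; the intervening sketch of the BBF argument (Behrstock inequality, slow variation of $\pi_Y(V,W_i)$ along edges, comparison with the standard path in the quasi-tree) is a reasonable outline of how \cite{BBF} proceeds, but it is extra material not present in --- and not needed for --- the paper.
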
 

The class of acylindrical actions has received great interest in recent years. See Osin \cite{Osin6} and references therein, for a survey of a rapidly growing body of studies of acylindrical hyperbolic groups. 

\begin{defn}\label{HypAclindDef}
An action of $G$ on a geodesic metric space $\mathrm Y$ is \textit{acylindrical} if for any $D>0$ there exist $R, N>0$ such that  
$$
\sharp \{g\in G: d(x, gx)\le D, d(y,  gy)\le D, d(x, y)>R\} \le N
$$
for any given $x, y\in \mathrm Y$. 
\end{defn}

{The action of $G$ on the projection complex in \cite{BBF} was recently shown to be acylindrical for a variety of interesting examples, including a proper action with contracting elements.}
\begin{thm}\cite[Theorem 5.10]{BBFS}\label{AcylinAction}
There exists $K>0$ such that $G$ acts acylindrically on the projection complex  $\mathcal P_K(\mathbb X)$ that is a quasi-tree.
\end{thm}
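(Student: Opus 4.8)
Theorem~\ref{AcylinAction} is quoted verbatim from \cite[Theorem 5.10]{BBFS}, so I only indicate how one obtains it in the present situation, referring to \cite{BBFS} for the details. The plan is to verify the two ``local'' hypotheses behind the general acylindricity criterion for projection complexes and then invoke it. The first is a bounded-geometry statement for the projection system $\mathbb X=\{g\ax(f):g\in G\}$: since $\mathbb X$ has the bounded intersection property and $G$ acts properly on $\mathrm Y$, for every $C>0$ and every $Y\in\mathbb X$ there are only boundedly many $g\in G$ moving $Y$ within projection-distance $C$ of itself, and the orbit $G\cdot Y\subset\mathbb X$ is discrete in the appropriate sense. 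The second is acylindricity of the local actions: the stabiliser $G_Y$ of a vertex $Y=a\ax(f)$ equals $aE(f)a^{-1}$, which is virtually cyclic and acts properly and coboundedly on $Y$, hence acylindrically.

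Granting these inputs, fix $K$ large and an auxiliary constant $K'\gg K$ as in Lemma~\ref{StrongGeod}. The mechanism is as follows. Given $D>0$, suppose $d(x,gx)\le D$ and $d(y,gy)\le D$ in $\mathcal P_K(\mathbb X)$ with $d(x,y)$ large. Since $\mathcal P_K(\mathbb X)$ is a quasi-tree, and geodesics with fixed endpoints coarsely fellow-travel in a hyperbolic space, $g$ maps the geodesic $[x,y]$ to a geodesic lying within bounded Hausdorff distance of $[x,y]$, the bound depending only on $D$ and the hyperbolicity constant. Combined with Lemma~\ref{StrongGeod}, this forces $g$ to carry the long central portion of the linearly ordered set $\mathbb X_{K'}(x,y)$ onto itself up to a shift of the ordering by some $s$ with $|s|$ bounded in terms of $D$.

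When $s=0$ the element $g$ fixes a vertex $Y_0\in\mathbb X$ occurring on $[x,y]$, and the count is reduced to the set of $g\in G_{Y_0}$ moving a chosen point of $Y_0$ a bounded amount in $\mathrm Y$; this set is finite by properness of $G\curvearrowright\mathrm Y$. The remaining case $s\ne 0$ is the genuinely delicate one, since then $g$ need not fix any vertex: instead the configuration in $\mathrm Y$ underlying the long sub-list of $\mathbb X_{K'}(x,y)$ is an almost-periodic admissible path in the sense of Definition~\ref{PeriAdmDef} (cf.\ Proposition~\ref{admissible}), and $g$ acts on it as a quasi-translation of bounded translation length; one then bounds the number of such $g$ using the discreteness of the orbit $G\cdot Y$ together with the bounded intersection property. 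Assembling the two cases yields the constants $R,N$ required in the definition of acylindricity. This last step, handling elements without a fixed vertex, is where the interplay between the quasi-tree geometry, the projection-complex axioms, and properness of $G\curvearrowright\mathrm Y$ is essential, and it is the main obstacle; in \cite{BBFS} it occupies the bulk of Section~5.
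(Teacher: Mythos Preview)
Your sketch is plausible as an outline of a direct proof of acylindricity, but it takes a substantially different route from the paper, and in doing so you miss the actual point of the paper's argument.

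The paper does not attempt a direct verification of acylindricity along the lines you describe. Instead, it invokes the general criterion \cite[Theorem 3.9]{BBFS}, whose single hypothesis is of WPD type: there exist fixed integers $N$ and $B$ such that the common stabiliser in $G$ of any $N$ distinct elements of any interval $\mathbb X_K(V,W)$ is a finite subgroup of size at most $B$. The whole proof in the paper consists of checking this one condition with $N=2$: if $g$ stabilises two distinct $t_1\ax(f),\,t_2\ax(f)\in\mathbb X$, then a uniform power $g^k$ lies in $t_1E(f)t_1^{-1}\cap t_2E(f)t_2^{-1}$, and one shows that $tE(f)t^{-1}\cap E(f)$ has uniformly bounded cardinality over all $t\notin E(f)$, using properness of the action and maximality of $E(f)$. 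That is the entire argument; the quasi-tree geometry, the shift analysis, and the $s\ne 0$ case you describe are all absorbed into \cite[Theorem 3.9]{BBFS} and need not be reproduced.

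By contrast, your two ``local hypotheses'' are not the ones \cite{BBFS} actually requires. The first (boundedly many $g$ moving $Y$ within projection-distance $C$ of itself) is neither clearly true as stated nor what is needed: any $g\in E(f)$ fixes $\ax(f)$ as a vertex of $\mathcal P_K(\mathbb X)$, so infinitely many elements move $Y$ distance zero. The relevant finiteness is for \emph{common} stabilisers of \emph{distinct} vertices, which is exactly the condition the paper verifies. Your $s=0$ case implicitly uses this, but you never isolate it as the key hypothesis, and your $s\ne 0$ case is deferred back to \cite{BBFS} anyway. The cleaner and correct thing to do is what the paper does: identify the bounded-common-stabiliser condition and verify it directly.
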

\begin{proof}
This is stated in \cite[Theorem 5.10]{BBFS}, as a consequence of \cite[Theorem 3.9]{BBFS} applied in our setting. However, no explicit proof is given there to verify the following condition: for some fixed $N$ and $B$, for any $N$ distinct elements of any $\mathbb X_K(V, W)$ the common
stabilizer is a finite subgroup of size at most $B$. 

Indeed, any fixed integer $N\ge 2$ suffices. If $g$ lies in the stabilizer of a  set of $N$ elements, say containing distinct $V=t_1\ax(f), W=t_2\ax(f) \in \mathbb X$,  then a   power $g^k$  fixes $V$ and $W$ for some $k=k(N)$. By definition of $\ax(f)=E(f)\cdot o$ and $E(f)$ (\ref{Ehdefn}), we have that $E(f)$ is the stabilizer of $\ax(f)$ and so $g^k\in t_1 E(f)t_1^{-1} \cap t_2 E(f)t_2^{-1}$. It suffices to see that $t E(f)t^{-1} \cap E(f)$ contains at most $B$ elements for any $t\in G\setminus E(f)$. {Since the action is proper}, there are only finitely many $tE(f)$ such that the intersection  is non-trivial.  For each $tE(f)\ne E(f)$, the intersection $t E(f)t^{-1} \cap E(f)$ is finite. So we obtained a uniform number $B_0$ bounding on $\sharp t E(f)t^{-1} \cap E(f)$ for any $t\notin E(f)$. Thus, $B=kB_0$ is the desired constant in the above condition.
\end{proof}

Denote by $d_{\mathcal P}$ the induced length metric on the graph $\mathcal P_K(\mathbb X)$. The following criterion {for an isometry to be  loxodromic} was obtained in \cite[Lemma 3.22]{BBF}.
\begin{lem}\label{LoxoElem}
Let $K'$ be the constant given in Lemma \ref{StrongGeod}. If there exists $N>0$ and $X\in \mathbb X$ such that $d_{\mathcal P}(g^{-N} X, g^N X)>K'$, then $g$ acts as a loxodromic isometry on $\mathcal P_K(\mathbb X)$. 
\end{lem}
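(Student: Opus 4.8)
\textbf{Proof plan for Lemma \ref{LoxoElem}.}

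The plan is to verify the hypothesis of the previously-established loxodromic criterion of Bestvina--Bromberg--Fujiwara \cite[Lemma 3.22]{BBF}, which says that an isometry $g$ of $\mathcal P_K(\mathbb X)$ is loxodromic provided there exist $N>0$ and $X\in\mathbb X$ with $d_{\mathcal P}(g^{-N}X,g^N X)>K'$. Since this is exactly the statement we want, the work is entirely in checking that the quoted result applies verbatim in our setting; the content of the present lemma is really a repackaging of \cite[Lemma 3.22]{BBF} with the distance bound made explicit through the constant $K'$ from Lemma \ref{StrongGeod}. So first I would recall the mechanism of that criterion: if $d_{\mathcal P}(g^{-N}X,g^{N}X)>K'$, then by Lemma \ref{StrongGeod} the interval $\mathbb X_{K'}(g^{-N}X,g^{N}X)$ is nonempty and every one of its members lies on \emph{any} geodesic in $\mathcal P_K(\mathbb X)$ from $g^{-N}X$ to $g^{N}X$; this rigidity of the interval is what lets one concatenate the $g$-translates of a geodesic $[g^{-N}X,g^{N}X]$ into a path that fellow-travels a genuine geodesic ray, yielding positive translation length.

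The key steps, in order, are: (i) fix the constant $K$ so that $\mathcal P_K(\mathbb X)$ is the quasi-tree on which $G$ acts (Theorem \ref{AcylinAction}), and fix $K'=K'(K)$ as in Lemma \ref{StrongGeod}; (ii) given the hypothesis $d_{\mathcal P}(g^{-N}X,g^{N}X)>K'$, apply Lemma \ref{StrongGeod} to conclude that every $W\in\mathbb X_{K'}(g^{-N}X,g^{N}X)$ appears on every geodesic between these two vertices; (iii) translate by powers of $g$: for each $j\in\mathbb Z$, the set $g^{2jN}\mathbb X_{K'}(g^{-N}X,g^{N}X)=\mathbb X_{K'}(g^{(2j-1)N}X,g^{(2j+1)N}X)$ sits on every geodesic between its two endpoints, and consecutive such intervals share only the rigid behavior at the common endpoint; (iv) invoke \cite[Lemma 3.22]{BBF} — whose proof is precisely this concatenation argument — to conclude that the orbit map $j\mapsto g^{jN}X$ is a quasi-geodesic in $\mathcal P_K(\mathbb X)$, hence $g$ is loxodromic. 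The only point requiring a word of care is that $\mathbb X$ here is the specific system $\{g\ax(f):g\in G\}$ of contracting sets with bounded intersection, which was already shown to satisfy the projection complex axioms of \cite[Section 3.1]{BBF}; thus all of the cited lemmas (3.18 and 3.22 of \cite{BBF}) apply without modification.

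The main obstacle, such as it is, is bookkeeping rather than mathematics: one must make sure the constant $K'$ quoted in the statement is \emph{the same} $K'$ that appears in both Lemma \ref{StrongGeod} and in \cite[Lemma 3.22]{BBF}, i.e.\ that a single choice of $K'$, large relative to the $K$ fixed by Theorem \ref{AcylinAction}, simultaneously makes the strong-geodesic property of Lemma \ref{StrongGeod} hold and makes the \cite{BBF} loxodromic criterion applicable. Since both results only require $K'$ to be taken sufficiently large relative to $K$, taking the maximum of the two thresholds suffices, and there is no circularity. I do not expect any genuine difficulty here; the lemma is essentially a citation, stated in the form most convenient for its later use (in the proof that an exponentially generic set of elements acts loxodromically on the projection complex, en route to Lemma \ref{UnifKernel}).
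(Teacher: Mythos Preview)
Your proposal is correct and matches the paper's approach exactly: the paper simply attributes Lemma \ref{LoxoElem} to \cite[Lemma 3.22]{BBF} without giving a proof, and you correctly identify this as a direct citation. Your additional explanation of the concatenation mechanism behind the BBF criterion is accurate but more than the paper itself provides.
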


\subsection{Generic elements act loxodromically on projection complex}

In this subsection, we fix two independent  contracting elements $f_1, f_2$. Then  the set $\mathbb X=\{g\ax(f_i): g\in G, i=1, 2\}$ is a contracting system with bounded projection, for which  we   construct  the projection complex $\mathcal P_K(\mathbb X)$ with $K$ satisfying Theorem \ref{AcylinAction}.

We are now ready to state the main result of this section.

\begin{lem}\label{LoxdroPC}
There exists an exponentially generic set $\mathcal G$ of elements which act by loxodromic isometries on the projection complex $\mathcal P_K(\mathbb X)$.
\end{lem}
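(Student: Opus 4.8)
The plan is to use the periodic admissible paths furnished by the genericity results of \textsection\ref{Section3} to equip a generic element $g$ with a combinatorial axis inside $\mathbb X$ that behaves like a geodesic of $\mathcal P_K(\mathbb X)$, and then to apply the loxodromic criterion Lemma \ref{LoxoElem}. Concretely, by the construction in the proof of Theorem \ref{LinearDrift} (taken with $f=f_1$, with $m$ large, and with constant $\theta_1=\theta_2$), there is an exponentially generic set $\mathcal G$ of contracting elements such that each $g\in\mathcal G$ admits a periodic $(g,D,C)$-admissible path $\gamma=\bigcup_{i\in\mathbb Z}g^i(q_0p_0)$ whose combinatorial axis is $\mathbb A(g)=\{X_i:i\in\mathbb Z\}\subset\mathbb X$, where $X_i:=g^iX_0$, $X_0$ is a translate of $\ax(f_1)$, and $\len (p_i)\ge d(o,f_1^{\,k}o)-2\epsilon$ for a multiple $k$ of $m$; in particular $\len (p_i)$ may be taken as large as we please. (The combinatorial axis is literally $\{N_C(X_i)\}$, but replacing $X_i$ by its $C$-neighborhood changes all projections only by a bounded additive constant, which I absorb below.) Discarding the growth-tight --- hence, by purely exponential growth, exponentially negligible --- set of $g$ conjugate into $E(f_1)$, no nontrivial power of $g$ stabilizes $X_0$ and the $X_i$ are pairwise distinct. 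By Lemma \ref{LoxoElem} it now suffices to find $N>0$ with $d_{\mathcal P}(g^{-N}X_0,g^{N}X_0)=d_{\mathcal P}(X_{-N},X_N)>K'$, where $K'$ is the constant of Lemma \ref{StrongGeod}.

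The crux is the projection estimate
\[
\pi_{X_j}\big(X_{-N},X_N\big)\ \ge\ \len (p_j)-O(1)\qquad\text{whenever }-N<j<N,
\]
where the bounded error depends only on the contraction constant $C$, the bounded projection constant $B$ of $\mathbb X$, the constant $\epsilon$ of Proposition \ref{admissible}, and the uniform discrepancy between nearest-point projection in $\mathrm Y$ and the projection defining $\mathcal P_K(\mathbb X)$. To prove it, fix $j$ and recall from Proposition \ref{admissible} (and the bounded-projection estimate for one-sided subrays in \cite[3.7]{YANG6}) that $\gamma$ enters and leaves $N_C(X_j)$ within $\epsilon$ of $(p_j)_-$ and $(p_j)_+$, and that the portion of $\gamma$ before $N_C(X_j)$, respectively after it, projects into a subset of $X_j$ of diameter at most $B$ lying near $(p_j)_-$, respectively near $(p_j)_+$. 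For $i<j$ the set $X_i\in\mathbb X$ has projection of diameter at most $B$ to $X_j$ and meets the portion of $\gamma$ before $N_C(X_j)$ (through the endpoints of $p_i$), so $\pi_{X_j}(X_i)$ lies within a bounded distance of $(p_j)_-$; symmetrically $\pi_{X_j}(X_k)$ lies within a bounded distance of $(p_j)_+$ for $k>j$. Since $d\big((p_j)_-,(p_j)_+\big)\ge\len (p_j)$, the displayed inequality follows. Choosing $m$ large enough that $\len (p_j)\ge d(o,f_1^{\,k}o)-2\epsilon$ exceeds $K'$ plus these bounded constants gives $\pi_{X_j}(X_{-N},X_N)>K'$ for all $-N<j<N$ and all $N$.

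To conclude, put $N=\lceil K'/2\rceil+1$. Then $X_j\in\mathbb X_{K'}(X_{-N},X_N)$ for each $-N<j<N$, so by Lemma \ref{StrongGeod} the $2N+1$ distinct vertices $X_{-N},X_{-N+1},\dots,X_N$ all lie on every geodesic of $\mathcal P_K(\mathbb X)$ between $X_{-N}$ and $X_N$; hence $d_{\mathcal P}(X_{-N},X_N)\ge 2N>K'$, and by Lemma \ref{LoxoElem} every $g\in\mathcal G$ acts on $\mathcal P_K(\mathbb X)$ as a loxodromic isometry. Since $\mathcal G$ is exponentially generic this is the assertion. The step I expect to be the main obstacle is the projection estimate: one must translate the concrete metric geometry of periodic admissible paths --- entry and exit points, bounded projection of one-sided subrays --- into statements about the axiomatic projections and interval sets $\mathbb X_K(\cdot,\cdot)$ of \cite{BBF} underlying $\mathcal P_K(\mathbb X)$, while carrying along the harmless but omnipresent discrepancy between each $X_i$ and its $C$-neighborhood; the remaining steps are then formal.
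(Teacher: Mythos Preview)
Your proof is correct and follows essentially the same architecture as the paper's: produce a periodic admissible path for a generic element $g$ with combinatorial axis $\{X_i=g^iX_0\}\subset\mathbb X$, establish the projection estimate $\pi_{X_j}(X_{-N},X_N)>K'$ for $-N<j<N$ via Proposition \ref{admissible}, feed this into Lemma \ref{StrongGeod} to get linear growth of $d_{\mathcal P}(X_{-N},X_N)$, and conclude with Lemma \ref{LoxoElem}. The only real difference is the source of the admissible path: the paper invokes the construction from the proof of Proposition \ref{StablePointedLength} (minimal representatives carrying both an $(\epsilon,f_1^m)$- and an $(\epsilon,f_2^m)$-barrier, exponentially generic by Theorem \ref{Conj2BarrierFree}), while you use the construction from the proof of Theorem \ref{LinearDrift} with $f=f_1$. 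Either choice produces a combinatorial axis in $\mathbb X$ with arbitrarily long $p_i$, so the rest of the argument is identical.

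One point deserves correction. Your step of ``discarding the growth-tight set of $g$ conjugate into $E(f_1)$'' is asserted without justification; growth-tightness of $\bigcup_{t\in G}tE(f_1)t^{-1}$ is not provided anywhere in the paper and would require a separate argument. Fortunately this step is superfluous. For $g$ in the generic set produced inside the proof of Theorem \ref{LinearDrift}, the assumption $\mathbf b_1\cap N_C(g^{-1}\alpha)=\emptyset$ already forces $g\mathbf b_1\ne\mathbf b_1$ (since $\alpha$ meets $N_C(\mathbf b_1)$, so would $g^{-1}\alpha$ meet $N_C(g^{-1}\mathbf b_1)$); and if $g^k\mathbf b_1=\mathbf b_1$ for some $k\ge2$ then, because $g$ is contracting, maximality of elementary groups gives $E(g)=E(g^k)=t_1E(f_1)t_1^{-1}$, whence $g\in t_1E(f_1)t_1^{-1}$ and $g\mathbf b_1=\mathbf b_1$, a contradiction. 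Thus the $X_i$ are already pairwise distinct, and you can simply delete the discarding step.
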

\begin{proof}
Fix a large integer $m>0$. Let $\mathcal G$ be the set of elements $g$ such that the minimal representative in $[g]$ contains at least one $(\epsilon, f_1^m)$-barrier and one $(\epsilon, f_2^m)$-barrier. By Theorem \ref{Conj2BarrierFree}, this is an exponentially generic set.

We shall apply Lemma \ref{LoxoElem} to show that    $g\in \mathcal G$ is a loxodromic isometry on $\mathcal P_K(\mathbb X)$.  Since  loxodromic   elements are preserved under conjugacy, we may assume that $g$ is a minimal element in $[g]$. Denote $\alpha=[o, go]$. In the proof of Proposition \ref{StablePointedLength}, it is proved that there exists a periodic $(g, D, \sigma)$-admissible path $$\gamma=\cup_{i\in \mathbb Z} g^i([x, y]_\alpha \cdot [y, gx])$$ defined in (\ref{PeriodicAdmEQ}), where $x, y$ are the entry and exit points of $\alpha$ in $N_\epsilon(\mathbf b)$, and $\mathbf b$ is either an  $(\epsilon, f_1^m)$-barrier or an $(\epsilon, f_2^m)$-barrier. The associated combinatorial axis is $\mathbb A(\gamma)=\{g^i N_\epsilon(\mathbf b): i\in \mathbb Z\}$ and let us denote $X=N_\epsilon(\mathbf b)\in \mathcal P_K(\mathbb X)$.    

Consider the geodesic $[g^{-N} X, g^{N} X]$ in $\mathcal P_K(\mathbb X)$. We choose $m$ big enough so that $$D>\max \{d(o, f_io): i=1, 2\}-2\epsilon\gg K'$$  where the constant $K'$ is given by Lemma \ref{StrongGeod}. By Proposition \ref{admissible},  we can derive that for each $g^iX$ with $|i|< N$, we have $d_{g^iX}(g^{-N} X, g^{N} X)>K'$. By Lemma \ref{StrongGeod}, we have that $g^iX\in [g^{-N} X, g^{N} X]$. This implies that the length of the geodesic $[g^{-N} X, g^{N} X]$ grows linearly as $N\to \infty$.  Thus, the assumption of Lemma \ref{LoxoElem}  is fulfilled, so $g$ has positive stable length in $\mathcal P_K(\mathbb X)$. 
\end{proof}

The following consequence will be useful in the next section.

\begin{lem}\label{UnifKernel}
 There exists an integer $N>0$ such that for any $g\in \mathcal G$ (in Lemma \ref{LoxdroPC}), the following exact sequence 
$$
1\to F\to E^+(g)\to \mathbb Z\to 1
$$
where $\sharp F<N$.
\end{lem}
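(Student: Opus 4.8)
The plan is to bootstrap from the acylindrical action of $G$ on the projection complex $\mathcal P_K(\mathbb X)$ supplied by Theorem \ref{AcylinAction}, combined with Lemma \ref{LoxdroPC}. The key point is that the finite group $F=\ker(\phi\colon E^+(g)\to\mathbb Z)$ will be trapped inside the elementary closure of $g$ for the action on $\mathcal P_K(\mathbb X)$, whose finite part is bounded in cardinality \emph{uniformly over all loxodromic elements} by \cite[Lemma 6.8]{Osin6}; then one only has to transfer this bound back to $F$.

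Here is the sequence of steps. Let $g$ be as in the statement. By Lemma \ref{LoxdroPC}, $g$ acts on the quasi-tree (hence hyperbolic) graph $\mathcal P_K(\mathbb X)$ as a loxodromic isometry, with attracting and repelling fixed points $g_+,g_-\in\partial\mathcal P_K(\mathbb X)$. Write $E_{\mathcal P}(g)=\mathrm{Stab}_G(\{g_+,g_-\})$ for the elementary closure of $g$ for this action and $E^+_{\mathcal P}(g)=\mathrm{Stab}_G(g_+)\cap\mathrm{Stab}_G(g_-)$. Since the action is acylindrical, $E_{\mathcal P}(g)$ is virtually cyclic, $E^+_{\mathcal P}(g)$ has index at most two, and — having $\mathbb Z$ as a quotient and a finite kernel by Stallings' theorem — splits as $E^+_{\mathcal P}(g)=F_{\mathcal P}\rtimes\mathbb Z$ with $F_{\mathcal P}$ finite; by \cite[Lemma 6.8]{Osin6} there is a constant $N$, independent of $g$, with $\sharp F_{\mathcal P}\le N$. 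Next I claim $E^+(g)\le E^+_{\mathcal P}(g)$: if $h\in E^+(g)$ then $hg^nh^{-1}=g^n$ for some $n>0$, and since $g^n$ is loxodromic on $\mathcal P_K(\mathbb X)$ with the same fixed points $g_\pm$, applying $h$ to these points gives $h(g_\pm)=\big(hg^nh^{-1}\big)_\pm=(g^n)_\pm=g_\pm$, so $h\in E^+_{\mathcal P}(g)$. Consequently $E^+(g)$ is a subgroup of $F_{\mathcal P}\rtimes\mathbb Z$, and the finite subgroup $F$ (finite by the Stallings sequence (\ref{ExactEg})) must map trivially to the $\mathbb Z$-factor, whence $F\le F_{\mathcal P}$ and $\sharp F\le\sharp F_{\mathcal P}\le N$. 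The displayed exact sequence is just (\ref{ExactEg}) via Corollary \ref{EPlush}, and this $N$ is uniform, as required.

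The only substantive point beyond invoking \cite[Lemma 6.8]{Osin6} is the compatibility of the two elementary closures, namely the inclusion $E^+(g)\le E^+_{\mathcal P}(g)$: an element of $E^+(g)$ is defined purely in terms of $G$ normalising a power of $g$, and one must know that it then stabilises the endpoints of $g$ in $\partial\mathcal P_K(\mathbb X)$ — which is exactly where Lemma \ref{LoxdroPC} (loxodromicity of $g$, and hence of every nonzero power of $g$, on $\mathcal P_K(\mathbb X)$) enters. If one prefers to avoid quoting \cite{Osin6} as a black box, an equivalent hands-on argument is available: each $h\in F$ commutes with a suitable power $g^n$ of $g$, hence moves every vertex of the quasi-axis $\{g^{ni}X : i\in\mathbb Z\}$ of $g$ in $\mathcal P_K(\mathbb X)$ by a distance bounded only in terms of the hyperbolicity and quasi-geodesic constants of $\mathcal P_K(\mathbb X)$ (a finite-order isometry fixing both endpoints of a quasi-line cannot translate along it), and then the acylindricity of Theorem \ref{AcylinAction}, applied to $X$ and $g^{nm}X$ for $m$ large, caps $\sharp F$ by a constant independent of $g$.
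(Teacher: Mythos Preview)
Your proof is correct and follows essentially the same strategy as the paper: both rely on the acylindricity of the action on $\mathcal P_K(\mathbb X)$ (Theorem \ref{AcylinAction}), the loxodromicity supplied by Lemma \ref{LoxdroPC}, and \cite[Lemma 6.8]{Osin6}. The paper's argument is in fact your ``hands-on'' alternative at the end, applied directly to $F$ via a coarse center of an $F$-orbit and normality of $F$ in $E^+(g)$, rather than first passing through the inclusion $E^+(g)\le E^+_{\mathcal P}(g)$ as in your main version.
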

\begin{rem}
The lemma is sharp in the sense that there is no uniform bound on the kernel for a certain sequence of hyperbolic (contracting) elements in Dunwoody's inaccessible groups \cite{Abbott}. Note that, Dunwoody's groups are infinitely-ended and thus relatively hyperbolic, so the prime conjugacy growth formula holds by Corollary \ref{RHThm}.
\end{rem} 

\begin{proof}
The proof is due to \cite[Lemma 6.8]{Osin6} and is short, so we include it for completeness. Pick any point $x\in \mathrm Y$. Since $\mathrm Y$ is a hyperbolic space, the diameter of the coarse center of $F\cdot x$ is uniformly bounded by a constant $D>0$. Without loss of generality, assume that $x$ lies in the center so $d(x, fx)\le D$ for any $f\in F$.

Choose $h\in E^+$ such that $d(x, hx)\le R$. Since $h^{-1}fh\in F$ and $d(fh x, hx)=d(h^{-1}fhx, x) \le D$, the definition of acylindrical hyperbolicity implies  that the cardinality of $F$ is   bounded above by $N$.
\end{proof}

\section{Primitive       conjugacy classes are generic: end of the proof of \ref{mainthm}}\label{Section7}

To derive the upper bound for all conjugacy classes, we will show that  the non-primitive ones are exponentially negligible.

\begin{lem}\label{GenericPrim}
Assume that the action is SCC. Then 
$$\frac{\sharp \big(\mathcal {C}(o, n)\setminus \mathcal {C'}(o, n)\big)}{\sharp\mathcal {C}(o, n)}\to 0,$$
exponentially quick as $n\to\infty$.  
\end{lem}
\begin{proof}
By Lemma \ref{TightConjugacyBarrier}, we can assume that all elements in $\mathcal C(o, n)$ are  contracting. By Lemma \ref{UnifKernel},   assume further that for each $[g]\in \mathcal C(o, n)$, there  exists a uniform number $N>0$ with the following exact sequence 
$$
1\to F\to E^+(g) \stackrel{\phi}{\to}  \mathbb Z\to 1
$$
where $\sharp F<N$. By Corollary \ref{GenericSetTwoLengths}, we can assume that $$|\tau[g]-\len_o[g]|\le B$$  for a uniform constant $B>0$. 

By Lemma \ref{StrongPrimitive}, a non-primitive contracting element is not strongly primitive. Thus, it suffices to bound  elements which are not strongly primitive. Let $\mathcal C''(o, n)$ be  the   set of conjugacy classes of strongly primitive elements with  algebraic  length at most $n$. 
 
Let $[g]\in \mathcal C(o, n)\setminus \mathcal C''(o, n)$ be a non-strongly primitive element, so by definition, there is a strongly primitive element $g_0\in E^+(g)$ so that $g=g_0^m$ for some $|m|\ge 2$. Note that there are at most $\sharp F$ choices of $g_0$, where $\sharp F<N$.
 
By    Corollary \ref{GenericSetTwoLengths}, $$\len_o[g_0]\le B+\frac{\tau[g]}{|m|}\le B+n/|m|.$$ We define  a map as follows $$\Pi: \mathcal C(o, n)\setminus \mathcal C''(o, n) \to \mathcal C''(o, n+B)$$ 
by $\Pi([g]):=[g_0] \in \mathcal C''(o, \frac{n}{|m|}+B).$

For $|m|\ge 2$, we obtain a constant $0<\omega_1<\e G$ such that  
$$\sum_{|m|\ge 2} \sharp \mathcal C''(o, \frac{n}{|m|}+B) \le \sum_{|m|\ge 2} \sharp N(o, \frac{n}{|m|}+B) \le \exp(\omega_1 n).$$ 

Recall that for each   $n\ge |m|\ge 2$, there are at most $N$ conjugacy classes $[g]$ such that $\Pi([g])= [g_0]$ with $\phi(g)=m$.  Hence,  $$\sharp \mathcal {C}(o, n)\setminus \mathcal {C''}(o, n) \le  N n \exp(\omega_1 n).$$   Since $\mathcal {C''}(o, n)\subset \mathcal {C'}(o, n)$ and $\sharp \mathcal C(o, n)\succ \frac{\exp( \e Gn)}{n}$ by  Corollary \ref{LbdConjGrowth}, the conclusion follows.
\end{proof}
 
Combined with Corollary \ref{PrimConjUpBnd}, we obtain the upper bound for all conjugacy classes.  
\begin{cor}\label{ConjUpBnd}
Assume that the action is SCC. Then $$\sharp \big(\mathcal {C}( n)\cap \mathcal {C}(o, n)\big)\le \sharp \mathcal {C}(o, n)\prec \displaystyle\frac{\exp(\e Gn)}{n}.$$
\end{cor}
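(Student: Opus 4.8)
The first inequality is immediate from the set-theoretic containment $\mathcal C(n)\cap\mathcal C(o,n)\subseteq\mathcal C(o,n)$, so the plan is to establish the second inequality $\sharp\mathcal C(o,n)\prec\exp(\e Gn)/n$ by splitting $\mathcal C(o,n)$ into primitive and non-primitive conjugacy classes and bounding each piece with results already proved.

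First I would dispose of the primitive classes. Since $\tau[g]\le\len_o[g]$ for every $g\in G$, any class lying in $\mathcal C'(o,n)$ automatically lies in $\mathcal C'(n)$, hence $\mathcal C'(o,n)\subseteq\mathcal C'(n)\cap\mathcal C(o,n)$. Applying Corollary \ref{PrimConjUpBnd} then gives $\sharp\mathcal C'(o,n)\prec\exp(\e Gn)/n$. Next I would control the non-primitive classes: by Lemma \ref{GenericPrim} the ratio $\sharp\mathcal C'(o,n)/\sharp\mathcal C(o,n)$ tends to $1$ (indeed exponentially fast), so for all large $n$ one has $\sharp\mathcal C(o,n)\le 2\,\sharp\mathcal C'(o,n)$; equivalently, the complement $\mathcal C(o,n)\setminus\mathcal C'(o,n)$ is growth tight, and $\exp(\omega_1 n)$ for any $\omega_1<\e G$ is $o\big(\exp(\e Gn)/n\big)$. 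Combining the two estimates yields $\sharp\mathcal C(o,n)\prec\exp(\e Gn)/n$, and then the set-theoretic inclusion gives $\sharp\mathcal C(n)\cap\mathcal C(o,n)\le\sharp\mathcal C(o,n)\prec\exp(\e Gn)/n$, completing the argument.

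Since the substantive work has already been carried out in the preceding sections — the linear-drift estimates and the comparison of stable and pointed length in \textsection\ref{Section3}, the thick-element count in Lemma \ref{ThickElements} behind Corollary \ref{PrimConjUpBnd}, the uniform bound on the finite kernel $F$ in Lemma \ref{UnifKernel}, and the exponential genericity of primitive classes in Lemma \ref{GenericPrim} — there is essentially no remaining obstacle here; the corollary is a bookkeeping step. The only point that requires a moment's care is invoking $\tau[g]\le\len_o[g]$ to see that counting with the pointed length dominates counting with the stable length, so that the bound obtained for $\mathcal C(o,n)$ transfers to $\mathcal C(n)\cap\mathcal C(o,n)$ without any further estimate.
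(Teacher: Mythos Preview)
Your approach is correct and matches the paper's one-line deduction (combine Corollary~\ref{PrimConjUpBnd} with Lemma~\ref{GenericPrim}). One small slip is worth flagging: the inclusion $\mathcal C'(o,n)\subseteq\mathcal C'(n)\cap\mathcal C(o,n)$ is not literally true, because $\mathcal C'(n)$ by definition requires $\tau[g]>0$, and this does not follow from $\len_o[g]\le n$ alone --- primitive torsion elements, or more generally primitive non-contracting elements, lie in $\mathcal C'(o,n)$ but not in $\mathcal C'(n)$. The repair is immediate: classes with $\tau[g]=0$ consist of non-contracting elements, and by Lemma~\ref{TightConjugacyBarrier} their minimal representatives form an exponentially negligible subset of $N(o,n)$, hence contribute at most $\exp(\omega_1 n)=o(\exp(\e G n)/n)$ to $\sharp\mathcal C'(o,n)$. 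With this caveat your argument is complete and identical in spirit to the paper's.
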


\begin{lem}\label{GenericPrim2}
Assume that the action is SCC. Then 
$$
\frac{\sharp \big( \mathcal {C}(o, n)\setminus \mathcal {C'}( n)\big)}{\sharp \big(\mathcal {C}(n)\cap \mathcal {C}(o, n)\big)}\to 0$$
exponentially quick as $n\to\infty$.  
\end{lem}
\begin{proof}
Denote by $\mathcal C''(n)$ the set of conjugacy classes of strongly primitive elements with stable length at most $n$. Noting that $\mathcal C''(o, n)\subset \mathcal C''(n)$,  the proof of Lemma \ref{GenericPrim} shows  $$\frac{\sharp \big (\mathcal {C}(o, n)\setminus \mathcal C''(n)\big)}{\sharp \mathcal C(o, n)}\to 0$$   exponentially quick.  
By Corollaries \ref{LbdConjGrowth} and \ref{ConjUpBnd}, we have $\sharp \big(\mathcal C(n)\cap \mathcal C(o, n)\big) \succ \sharp \mathcal C(o, n)$. 
We then obtain that $\mathcal {C}(o, n)\setminus \mathcal C''(n)$ is exponentially small relative to $\mathcal C(n)\cap \mathcal C(o, n)$. Thus, the conclusion follows.
\end{proof}

By Corollary \ref{LbdConjGrowth}, we obtain the lower bound for primitive conjugacy classes.
\begin{cor}\label{LbdPrimConjGrowth}
Assume that the action is SCC. Then $$\sharp \mathcal {C'}(o, n)\ge \sharp \big(\mathcal {C'}( n)\cap \mathcal {C}(o, n)\big)\succ \displaystyle\frac{\exp(\e Gn)}{n}.$$ 
\end{cor}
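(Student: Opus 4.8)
The plan is to deduce the statement directly by combining the lower bound for all conjugacy classes, Corollary \ref{LbdConjGrowth}, with the exponential genericity of primitive conjugacy classes inside a ball, Lemma \ref{GenericPrim}; no new geometric input is needed.

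First, the inequality $\sharp \mathcal {C'}(o, n)\ge \sharp \big(\mathcal {C'}(n)\cap \mathcal {C'}(o, n)\big)$ is immediate from the inclusion $\mathcal {C'}(n)\cap \mathcal {C'}(o, n)\subseteq \mathcal {C'}(o, n)$. For the lower bound itself, I would first record the set identity $\mathcal {C'}(n)\cap \mathcal C(o, n)=\mathcal {C'}(n)\cap \mathcal {C'}(o, n)$: by definition, membership in $\mathcal {C'}(n)$ already encodes primitivity, so a class lying in both $\mathcal {C'}(n)$ and $\mathcal C(o, n)$ automatically lies in $\mathcal {C'}(o, n)$, while the reverse inclusion is clear since $\mathcal {C'}(o, n)\subseteq \mathcal C(o, n)$. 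Hence it suffices to bound $\sharp\big(\mathcal {C'}(n)\cap \mathcal C(o, n)\big)$ from below.

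Since the action is SCC it has purely exponential growth, so Corollary \ref{LbdConjGrowth} supplies a constant $\theta>0$ with $\sharp\big(\mathcal C(n)\cap \mathcal C(o, n)\big)\ge \exp(\e Gn)/(\theta n)$ for all $n$. By Lemma \ref{GenericPrim} the ratio $\sharp\big(\mathcal {C'}(n)\cap \mathcal C(o, n)\big)\big/\sharp\big(\mathcal C(n)\cap \mathcal C(o, n)\big)$ tends to $1$, hence exceeds $1/2$ for all $n\gg 0$. Combining the two estimates gives $\sharp\big(\mathcal {C'}(n)\cap \mathcal {C'}(o, n)\big)\ge \exp(\e Gn)/(2\theta n)$ for $n\gg 0$, that is $\sharp\big(\mathcal {C'}(n)\cap \mathcal {C'}(o, n)\big)\succ \frac{\exp(\e Gn)}{n}$, and a fortiori $\sharp \mathcal {C'}(o, n)\succ \frac{\exp(\e Gn)}{n}$. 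The argument has no real obstacle; the only points demanding a moment's care are the bookkeeping in the first identity above and the observation that for the conclusion one only needs the primitivity ratio to stay bounded below, not its full exponential convergence.
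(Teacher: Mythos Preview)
Your proof is correct and follows essentially the same approach as the paper, which simply remarks that the corollary follows from Corollary~\ref{LbdConjGrowth} (implicitly combined with the just-proved Lemma~\ref{GenericPrim}). You have merely spelled out the bookkeeping identity $\mathcal{C'}(n)\cap \mathcal{C}(o,n)=\mathcal{C'}(n)\cap \mathcal{C'}(o,n)$ and the ratio argument that the paper leaves implicit.
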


Therefore, all assertions in \ref{mainthm} are proved.

\section{Applications}\label{SecProofs}

\subsection{Application to Teichm\"{u}ller space}

See e.g. \cite{Masur}, \cite{Wright} for background on Teichm\"{u}ller theory. 

Let $S$ be a closed oriented surface of genus at least 2.
The Teichm\"{u}ller space $Teich(S)$ is the space of marked conformal structures on $S$. The Teichm\"{u}ller metric is given by $$d(x,y)=\frac{1}{2} \inf_{f} \log K(f)$$ where $K(f)$ denotes the quasiconformal constant and $f$ varies over a given homotopy class.
This makes $Teich(S)$ into a proper unique geodesic metric space, and in fact a Finsler manifold. The unit (co)tangent bundle of $Teich(S)$ may be identified with
the space $Q(S)$ of unit area holomorphic quadratic differentials. By slight abuse of notation, we consider a Teichm\"{u}ller geodesic as a subset of both $Q^{1}(S)$ and $Teich(S)$.

The principal stratum $Q_{top}$ consists of those quadratic differentials all of whose zeros are simple. The mapping class group $MCG(S)$ acts on $Teich(S)$ by isometries. An element  $f\in MCG(S)$ is contracting for this action if and only if it is pseudo-Anosov. In this case, it preserves an invariant Teichm\"{u}ller geodesic: the axis $\Teichax(f)$ of $f$. The stable    length $\tau(f)$ of $f$ is the displacement of $f$ along this axis.

Gadre and Maher prove the following \cite[Proposition 2.7]{Gadre-Maher}.

\begin{prop}\label{closeprincipal}
Let $f\in MCG(S)$ be a pseudo-Anosov mapping class such that $\Teichax(f)$ lies in the principal stratum. For each $K>0$ there is an $L=L(S,f)>0$ such that whenever $\gamma$ is a bi-infinite Teichm\"{u}ller geodesic with uniquely ergodic vertical and horizontal measured foliations and containing points $X_{1},X_{2}$ in Teichm\"{u}ller space with $d(X_1,X_2)>L$ and $d(X_i,ax(f))<K$ for $i=1,2$ it follows that $\gamma$ also lies in the principal stratum. 
\end{prop}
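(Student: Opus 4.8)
The plan is to argue by contradiction, extract a limiting Teichm\"{u}ller geodesic, and identify it with $\Teichax(f)$. Two soft inputs will be used throughout: the principal stratum $Q_{top}$ is open in $Q^1(S)$, and the combinatorial type of the zero divisor of the unit quadratic differentials directing a Teichm\"{u}ller geodesic is constant along that geodesic (the Teichm\"{u}ller flow acts by affine maps, which preserve cone angles, and $MCG(S)$ preserves the stratification). Hence ``$\gamma$ lies in the principal stratum'' is a property of the quadratic differential directing $\gamma$ at any single one of its points, so it suffices to control one point. I would also record that, $f$ being pseudo-Anosov, $\Teichax(f)$ is a contracting (Morse) geodesic \cite{Minsky} whose two endpoints in $\mathcal{PMF}$ are the stable and unstable foliations of $f$, which are minimal, filling and uniquely ergodic (Thurston); the Morse property then gives a constant $K'=K'(K,f)$ such that any geodesic segment with both endpoints within $K$ of $\Teichax(f)$ is contained in $N_{K'}(\Teichax(f))$.

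Suppose the proposition fails for some $K$. Then there are $L_n\to\infty$ and bi-infinite Teichm\"{u}ller geodesics $\gamma_n$ with uniquely ergodic vertical and horizontal foliations, points $X_1^n,X_2^n\in\gamma_n$ with $d(X_1^n,X_2^n)>L_n$ and $d(X_i^n,\Teichax(f))<K$, yet $\gamma_n$ not contained in the principal stratum; by the above, $[X_1^n,X_2^n]_{\gamma_n}\subset N_{K'}(\Teichax(f))$. Let $m_n$ be the midpoint of $[X_1^n,X_2^n]_{\gamma_n}$. After replacing $\gamma_n$ by $f^{-k_n}\gamma_n$ for a suitable $k_n\in\mathbb Z$ (which fixes $\Teichax(f)$ and preserves both the uniquely ergodic hypothesis and the stratum), I may assume $m_n$ lies in a fixed compact set. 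Parametrising $\gamma_n$ by arclength with $\gamma_n(0)=m_n$, Arzel\`{a}--Ascoli in the proper space $Teich(S)$ produces, along a subsequence, $\gamma_n\to\gamma_\infty$ uniformly on compacta, a bi-infinite Teichm\"{u}ller geodesic through $m_\infty:=\lim m_n$; moreover the unit quadratic differential directing $\gamma_n$ at $m_n$ converges in $Q^1(S)$ to the one directing $\gamma_\infty$ at $m_\infty$ (and in particular their strata agree for large $n$).

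Since $d(m_n,X_i^n)\ge L_n/2\to\infty$ for $i=1,2$ and the sub-segments $[m_n,X_i^n]_{\gamma_n}$ lie in $N_{K'}(\Teichax(f))$, the whole geodesic $\gamma_\infty$ is contained in $N_{K'}(\Teichax(f))$. A bi-infinite Teichm\"{u}ller geodesic trapped in a bounded neighbourhood of $\Teichax(f)$ has the same two endpoints in $\mathcal{PMF}$ as $\Teichax(f)$, because each of its rays is strongly asymptotic to the corresponding ray of $\Teichax(f)$ by Masur's uniqueness and attraction theorem for geodesic rays landing at a uniquely ergodic foliation; and a transverse, filling, uniquely ergodic pair of foliations is directed by a unique bi-infinite Teichm\"{u}ller geodesic (Gardiner--Masur, equivalently Hubbard--Masur uniqueness together with unique ergodicity). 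Hence $\gamma_\infty=\Teichax(f)$ up to parametrisation, so the quadratic differential directing $\gamma_\infty$ at $m_\infty$ is a point of $\Teichax(f)$ and therefore lies in the open set $Q_{top}$. By the convergence above, for all large $n$ the quadratic differential directing $\gamma_n$ at $m_n$ lies in $Q_{top}$ as well, and constancy of the stratum along $\gamma_n$ forces $\gamma_n$ into the principal stratum, contradicting its choice.

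The step I expect to cost the most work is the identification $\gamma_\infty=\Teichax(f)$: it rests on knowing that bounded Hausdorff distance between Teichm\"{u}ller geodesics with uniquely ergodic ends forces equal endpoints, and that such an endpoint pair is directed by a unique geodesic. This is precisely where unique ergodicity is indispensable --- automatically for the ends of $\Teichax(f)$, and by hypothesis for $\gamma$ so that the limit stays rigid --- since without it distinct Teichm\"{u}ller geodesics may remain at bounded distance and the stratum of the limit could genuinely differ from that of the $\gamma_n$.
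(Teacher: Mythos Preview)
The paper does not give its own proof of this proposition: it is quoted verbatim from Gadre--Maher \cite[Proposition 2.7]{Gadre-Maher} and used as a black box. So there is nothing in the paper to compare your argument against.

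Your limiting argument is sound and is the natural way to prove such a statement. The logical skeleton --- contradict, normalise by powers of $f$, pass to a limit geodesic by Arzel\`a--Ascoli, trap it near $\Teichax(f)$ via the Morse property, identify it with $\Teichax(f)$ using Masur's convergence for rays with uniquely ergodic endpoint together with Hubbard--Masur/Gardiner--Masur uniqueness, then use openness of the principal stratum --- is correct. One small remark: in your identification step the unique ergodicity you actually use is that of the endpoints of $\Teichax(f)$ (automatic for a pseudo-Anosov), not the hypothesis on $\gamma_n$. Your argument would go through without ever invoking the assumption that the $\gamma_n$ have uniquely ergodic vertical and horizontal foliations, since convergence of the base points $m_n$ in a compact set and properness of $Q^1(S)\to Teich(S)$ already give subsequential convergence of the directing quadratic differentials, and continuity of the Teichm\"uller flow forces the limit differential to direct $\gamma_\infty$. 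So your final paragraph slightly overstates the role of that hypothesis; it may be present in the original statement for other reasons, or simply be a convenient sufficient condition.
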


We now prove Theorem \ref{Teichprincipal} from the introduction.
\begin{thm}(Theorem \ref{Teichprincipal})
Let $o\in Teich(S)$ and $\theta_1, \theta_2 \in (0,1)$.
The subset of $g\in MCG(S)$ of pseudo-Anosov elements satisfying the following properties is exponentially generic with respect to the Teichm\"{u}ller metric: 
\begin{enumerate}
\item

$d(o,go)\geq \tau(g)>\theta_1 d(o,go)$
\item

 $d(o,\Teichax(g))<(1-\theta_2)d(o,go)$
\item
 $\Teichax(g)$ lies in the principal stratum
\end{enumerate}
\end{thm}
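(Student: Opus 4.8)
The plan is to derive Theorem~\ref{Teichprincipal} from the linear-drift result Theorem~\ref{LinearDrift} together with the Gadre--Maher criterion, Proposition~\ref{closeprincipal}. Recall that a mapping class is contracting for the Teichm\"uller metric exactly when it is pseudo-Anosov (Minsky, \cite{Minsky}) and that $MCG(S)\act Teich(S)$ is SCC (\cite[Theorem~1.7]{EMR}, \cite[Section~10]{ACTao}), so Theorem~\ref{LinearDrift} applies verbatim to this action. An element of the set $\mathcal G$ produced below will be contracting, hence pseudo-Anosov, so it carries a genuine invariant Teichm\"uller axis $\Teichax(g)$ lying within finite Hausdorff distance of the coarse axis $\ax(g)=E(g)\cdot o$ (the finite-by-cyclic group $E(g)$ preserves $\Teichax(g)$ setwise). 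In particular $\Teichax(g)$ is a bi-infinite geodesic in a finite neighbourhood of $\ax(g)$, so by Definition~\ref{StabAxisDefn} it satisfies $\tax_R(g)\subset N_R(\Teichax(g))$, whence $d(o,\Teichax(g))\le d(o,\tax_R(g))+R$, and the conclusions of Theorem~\ref{LinearDrift} apply with $\Teichax(g)$ playing the role of the bi-infinite geodesic.

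First I would fix, once and for all, an auxiliary pseudo-Anosov $f\in MCG(S)$ whose Teichm\"uller axis $\Teichax(f)$ lies in the principal stratum (such elements are well known to exist). Let $\epsilon=\epsilon(f)$ and $R=R(f)$ be the constants furnished by Theorem~\ref{LinearDrift}, put $K:=\epsilon+d(o,\Teichax(f))$, and let $L=L(S,f)$ be the threshold produced by Proposition~\ref{closeprincipal} for this $K$ --- note $L$ is a conjugacy invariant, since everything in Proposition~\ref{closeprincipal} is $MCG$-equivariant. Now choose an integer $m$ so large that $m\,\tau(f)-2\epsilon>L$, and apply Theorem~\ref{LinearDrift} with this $m$ and with constant functions $\theta_1'$ slightly below $1-\theta_1$ and $\theta_2'\equiv(1-\theta_2)/2$; as these are bounded away from $0$, the resulting set $\mathcal G$ of pseudo-Anosov $g$ (with $n=d(o,go)$) satisfying
\begin{enumerate}
\item $n\ge\tau[g]>\theta_1 n$,
\item $d(o,\tax_R(g))\le\tfrac{1-\theta_2}{2}\,n$, and
\item every bi-infinite geodesic in a finite neighbourhood of $\ax(g)$ contains an $(\epsilon,f^m)$-barrier,
\end{enumerate}
is \emph{exponentially} generic. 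For $n$ large, (2) gives $d(o,\Teichax(g))\le\tfrac{1-\theta_2}{2}n+R<(1-\theta_2)n$, so after discarding the finitely many small-$n$ exceptions, every $g\in\mathcal G$ satisfies properties (1) and (2) of the theorem; it remains to upgrade (3) into $\Teichax(g)\in\mathbf{PS}$.

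Fix $g\in\mathcal G$ and apply (3) to the geodesic $\beta:=\Teichax(g)$: there is $t\in MCG(S)$ with $d(to,\beta)\le\epsilon$ and $d(tf^m o,\beta)\le\epsilon$. Let $X_1,X_2\in\beta$ be nearest points to $to$ and $tf^m o$. Then $d(X_1,X_2)\ge d(o,f^m o)-2\epsilon\ge m\,\tau(f)-2\epsilon>L$, while $to,tf^m o$ lie within $d(o,\Teichax(f))$ of the Teichm\"uller geodesic $t\Teichax(f)=\Teichax(tft^{-1})$, so $d(X_i,\Teichax(tft^{-1}))\le K$ for $i=1,2$. Moreover $t\Teichax(f)$ lies in the principal stratum (the stratification being $MCG$-invariant), and $\beta=\Teichax(g)$ has uniquely ergodic vertical and horizontal measured foliations, these being the minimal uniquely ergodic stable and unstable foliations of the pseudo-Anosov $g$. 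Hence Proposition~\ref{closeprincipal}, applied with $tft^{-1}$ in place of $f$, yields that $\beta$ lies in the principal stratum, i.e. $\Teichax(g)\in\mathbf{PS}$. Therefore $\mathcal G$ is contained in the set of the theorem, which is thus exponentially generic.

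I expect the main obstacle to be this last step: translating the essentially combinatorial output of Theorem~\ref{LinearDrift} --- that $\Teichax(g)$ fellow-travels a long $f^m$-labelled piece of some $MCG$-translate of the \emph{coarse} axis $\ax(f)=E(f)\cdot o$ --- into the precise metric hypothesis of Proposition~\ref{closeprincipal}, namely two points of $\Teichax(g)$ at distance exceeding $L$ and each within $K$ of a \emph{Teichm\"uller geodesic} in the principal stratum. This hinges on the uniform comparison between $\ax(f)$ and $\Teichax(f)$, on a careful ordering of the constants (crucially $\epsilon=\epsilon(f)$ is independent of $m$, which is exactly the shape of Theorem~\ref{LinearDrift}, so $K$, then $L$, then $m$ can be fixed in turn without circularity), and on the conjugacy-invariance of the Gadre--Maher constant. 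The unique-ergodicity hypothesis of Proposition~\ref{closeprincipal} causes no trouble, since it is only ever invoked for axes of pseudo-Anosov maps, where it is automatic.
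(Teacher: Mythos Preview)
Your proposal is correct and follows essentially the same approach as the paper: deduce properties (1) and (2) directly from Theorem~\ref{LinearDrift}, and upgrade the barrier condition (3) to the principal-stratum conclusion via Proposition~\ref{closeprincipal}. The only cosmetic difference is the direction of conjugation: the paper assumes $o\in\Teichax(f)$, conjugates $g$ by $h^{-1}$ to bring $\Teichax(g)$ close to the fixed $\Teichax(f)$, applies Proposition~\ref{closeprincipal} once for $f$, and then uses $MCG$-invariance of the principal stratum; you instead conjugate $f$ by $t$ and invoke Proposition~\ref{closeprincipal} for $tft^{-1}$, relying on the conjugacy-invariance of the constant $L$. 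These are equivalent uses of the same $MCG$-equivariance, and your bookkeeping of the constants (not assuming $o\in\Teichax(f)$, tracking the passage from $\tax_R(g)$ to $\Teichax(g)$, and noting that unique ergodicity is automatic for pseudo-Anosov axes) is, if anything, slightly more explicit than the paper's.
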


\begin{proof}
This essentially follows from Theorem \ref{simplifiedLinearDrift} and Proposition \ref{closeprincipal}.   
The only point that requires clarification is that the set of elements satisfying (3) is exponentially generic. Indeed, assume without loss of generality that $o\in \Teichax(f)$.
Theorem \ref{simplifiedLinearDrift} (3) implies that for each $n>0$ the set $A(f,n,K)$ of pseudo-Anosov $g$ such that $\Teichax(g)$ contains an $(K, f^n)$-barrier is exponentially generic. This means for each $g\in A(f,n,K)$  there is an $h\in MCG$ with
$d(o,h^{-1}\Teichax(g))<K$ and $d(f^n o,h^{-1}\Teichax(g))<K$. Note, $h^{-1}\Teichax(g)=\Teichax(h^{-1}gh)$. Thus $\Teichax(h^{-1}gh)$ contains $X_1, X_2$ with $d(X_1,o)<K$, $d(X_1,f^n o)<K$. Hence, $d(X_i, \Teichax(f))<K$ and $d(X_1,X_2)>d(o,f^n o)-2K=n\tau(f)-2K$. For large enough $n$   we have $n\tau(f)-2K$ is greater than the constant $L=L(S,f)$ from Proposition \ref{Teichprincipal} so that $\Teichax(h^{-1}gh)$ lies in the principal stratum. Since the principal stratum is $MCG(S)$ invariant it follows that $\Teichax(g)$ lies in the principal stratum.
\end{proof}

\subsection{New examples with prime conjugacy growth formulae}
In this subsection, we give some detail about examples  appearing in Corollaries \ref{CAT0Thm}, \ref{RHThm}, \ref{ModThm}. 

We first  consider the class of cubical groups. A group $G$ is \textit{cubical} if it admits a geometric action on a CAT(0) cube complex $\mathrm Y$. One the one hand, when $\mathrm Y$ is endowed with CAT(0) metric, an isometry is called \textit{rank-1} if $\tau(g)>0$ and any axis of it does not bound a half-flat. By \cite[Theorem 5.4]{BF2}, a rank-1 isometry is exactly a contracting element with respect to CAT(0) metric. 

On the other hand, it is very useful to study the cubical geometry when the one-skeleton of $\mathrm Y$ is equipped with the combinatorial metric. This is an $\ell^1$-metric, in contrast with $\ell^2$-metric induced from CAT(0) metric.    The following result is certainly known to experts \cite{Huang}.
\begin{lem}\label{rank1l1}
Let $G \curvearrowright \mathrm Y$ be a cubical group such that $\mathrm Y$ does not factor as a product of unbounded cube subcomplexes. Then $G$ contains a {contracting} element with respect to the action on one-skeleton of $\mathrm Y$ with the $\ell^1$-metric. Moreover, every rank-1 element preserves an $\ell^1$-geodesic by translation.
\end{lem}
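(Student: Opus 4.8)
The plan is to obtain the first assertion from the rank rigidity theorem of Caprace and Sageev, and the second from Haglund's theorem that isometries of CAT(0) cube complexes are semisimple for the combinatorial metric, using throughout that on a finite-dimensional cube complex the combinatorial metric $d_{\ell^1}$ and the CAT(0) metric $d_{\ell^2}$ are bi-Lipschitz equivalent (with constant $\sqrt{\dim \mathrm Y}$, since $\|v\|_2\le\|v\|_1\le\sqrt{\dim\mathrm Y}\,\|v\|_2$ inside each cube). In particular, being a contracting element is not affected by passing between these two metrics, so for the first assertion it suffices to produce a rank-$1$ element for the CAT(0) action.

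\textbf{Existence of a contracting element.} First I would pass to the Caprace--Sageev essential core $\mathrm Y_0\subseteq\mathrm Y$: a $G$-invariant combinatorially convex subcomplex on which $G$ still acts properly, cocompactly and essentially, and which lies at bounded Hausdorff distance from $\mathrm Y$ (the inessential halfspaces being shallow under the cocompact action). One then checks that the hypothesis that $\mathrm Y$ has no decomposition as a product of two unbounded cube subcomplexes forces $\mathrm Y_0$ to be unbounded and irreducible, and that the cocompact action of $G$ on $\mathrm Y_0$ has no fixed point in $\partial_\infty\mathrm Y_0$; after this, the rank rigidity theorem of Caprace and Sageev supplies a rank-$1$ isometry $g\in G$ for the CAT(0) metric on $\mathrm Y_0$. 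By \cite[Theorem 5.4]{BF2} the orbit $\langle g\rangle o$ is a $d_{\ell^2}$-contracting quasi-geodesic and $n\mapsto g^no$ is a quasi-isometric embedding; since $\mathrm Y_0$ sits at bounded distance inside $\mathrm Y$ and $d_{\ell^1}$ is bi-Lipschitz to $d_{\ell^2}$, the same orbit is $d_{\ell^1}$-contracting in $\mathrm Y$ and the orbital map remains a quasi-isometric embedding (here one also uses the standard fact that a set with bounded projection of geodesics has bounded projection of quasi-geodesics). Thus $g$ is a contracting element for the action of $G$ on the one-skeleton $\mathrm Y^{(1)}$; alternatively one may invoke the combinatorial characterisation of contracting isometries of cube complexes via skewered well-separated hyperplanes in the work of Genevois and \cite{Huang}.

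\textbf{Rank-one elements preserve an $\ell^1$-geodesic.} Let $g\in G$ be rank-$1$ for the CAT(0) metric. Then $g$ acts as a hyperbolic isometry of $(\mathrm Y, d_{\ell^2})$, so $\tau_{\ell^2}(g):=\inf_{x\in\mathrm Y}d_{\ell^2}(x,gx)>0$. Since every path has $\ell^1$-length at least its $\ell^2$-length, $d_{\ell^1}\ge d_{\ell^2}$, whence
\[
\inf_{v\in\mathrm Y^{(0)}}d_{\ell^1}(v,gv)\;\ge\;\inf_{x\in\mathrm Y}d_{\ell^2}(x,gx)\;=\;\tau_{\ell^2}(g)\;>\;0 ,
\]
so $g$ is a combinatorially hyperbolic automorphism of $\mathrm Y$. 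By Haglund's semisimplicity theorem this infimum is realised along a bi-infinite $\ell^1$-geodesic $L$ with $gL=L$ on which $g$ acts by translation, which is the desired $\ell^1$-axis.

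\textbf{Main obstacle.} The routine parts are the two displayed inequalities, the bi-Lipschitz comparison of the metrics, and the appeals to \cite[Theorem 5.4]{BF2}, to Caprace--Sageev and to Haglund. The delicate point is the reduction to the essential core: a product splitting of $\mathrm Y_0$, or a $G$-fixed point in $\partial_\infty\mathrm Y_0$, need not be visible on $\mathrm Y$ itself, so one must verify --- possibly after replacing $G$ by a finite-index subgroup --- that the stated hypothesis on $\mathrm Y$ really does deliver the irreducibility and the absence of a fixed point at infinity that rank rigidity requires, as well as the bounded-distance comparison between $\mathrm Y_0$ and $\mathrm Y$ used to transfer the contracting property back to $\mathrm Y$.
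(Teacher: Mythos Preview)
Your proof is essentially correct and arrives at the same conclusions as the paper, but the route for transferring the contracting property between the two metrics is different, and your version is under-justified at one step.

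\textbf{Comparison with the paper.} For the existence of a contracting element, both you and the paper invoke Caprace--Sageev rank rigidity. The paper, however, does not go through the bi-Lipschitz comparison of $d_{\ell^1}$ and $d_{\ell^2}$. Instead it uses a \emph{metric-free} characterisation: by \cite[Theorem~6.3]{CapSag} there is an element skewering a pair of strongly ($0$-)separated hyperplanes; skewering is a purely combinatorial notion, and by \cite[Theorem~3.13]{Genevois} an element skewering a pair of $k$-separated hyperplanes is contracting for the $\ell^1$-metric. The paper then goes further and shows that rank-$1$ in $\ell^2$ is \emph{equivalent} to contracting in $\ell^1$, again via hyperplanes (using \cite[Lemma~6.2]{CapSag} and \cite[Lemma~4.6]{ChaSul}). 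You mention this hyperplane route as an alternative in your last sentence; the paper takes it as the main argument, and it is cleaner.

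Your bi-Lipschitz argument does work, but not for the reason you state. A bi-Lipschitz change of metric does \emph{not} tautologically preserve the contracting property: the definition involves nearest-point projections and geodesics, both of which change. What makes it go through is the equivalence, for quasi-geodesics, between contracting and the Morse property, together with the fact that Morse is invariant under bi-Lipschitz change of metric. Your parenthetical (``bounded projection of geodesics implies bounded projection of quasi-geodesics'') is one half of this, but you should also say why the $d_{\ell^1}$-projection is controlled once the $d_{\ell^2}$-projection is; going through Morse avoids having to compare the two projection maps directly.

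For the ``moreover'' statement both proofs invoke Haglund. The paper adds one remark you omit: it observes (via Lemma~\ref{GeodesicNearAxis}) that a contracting element preserves the orientation of a nearby geodesic, hence acts without inversion, so that Haglund's theorem yields a genuine translation on an $\ell^1$-geodesic in $\mathrm Y$ itself rather than in a subdivision. Your argument that $\inf_{v\in\mathrm Y^{(0)}}d_{\ell^1}(v,gv)>0$ rules out a fixed vertex, but without the no-inversion remark one should still say why $g$ does not merely stabilise a cube.

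Finally, the reduction to the essential core that you flag as the ``main obstacle'' is handled in the paper simply by citing \cite[Theorem~6.3]{CapSag} directly; your more careful discussion of this point is reasonable but not something the paper dwells on.
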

\begin{proof}
Two disjoint hyperplanes $H_1, H_2$ are called \textit{$k$-separated} for $k\ge 0$ if there are at most $k$ hyperplanes intersecting both $H_1$ and $H_2$. An element $g$ \textit{skewers} $H_1, H_2$ if it pushes one halfspace bounding by $H_1$ into one bounding by $H_1$. By \cite[Theorem 6.3]{CapSag}, there exists a contracting isometry $g$ in $\ell^2$-metric  so that it skewers   $0$-separated hyperplanes. Such a hyperplane skewering involves no metrics at all and thus implies that $g$ is contracting with respect to $\ell^1$-metric by  \cite[Theorem 3.13]{Genevois}. The conclusion follows. 

In fact, a contracting element in $\ell^2$-metric is exactly contracting in $\ell^1$-metric. 
It is easy to see that an element $g$ is a rank-1 element in $\ell^2$-metric  iff it skewers a pair of $k$-separated hyperplanes for some $k\ge 0$. The  direction ``$\Leftarrow$'' is given by  \cite[Lemma 6.2]{CapSag}, and the other direction follows from  \cite[Lemma 4.6]{ChaSul}. Then the proof is concluded by \cite[Theorem 3.13]{Genevois} that the same hyperplane relation characterizes the  contracting property in $\ell^1$-metric.  The ``moreover'' statement is proved by Haglund in \cite{Hag}, since a contracting element acts without inversions on cube complex $\mathrm Y$ by Lemma \ref{GeodesicNearAxis}. 
\end{proof}

\paragraph{\textbf{Right-angled Artin (Coxeter) groups}} 
The class of right-angled Artin groups (RAAGs) is presented by
\begin{equation}\label{RAAG}
G=\langle V(\Gamma)|v_1v_2=v_2v_1 \Leftrightarrow (v_1, v_2)\in E(\Gamma)  \rangle
\end{equation}
for a finite simplicial graph $\Gamma$. See \cite{Charney} and \cite{Kob} for  references on RAAGs. A RAAG acts properly and cocompactly on a CAT(0)  cube complex called the \textit{Salvetti complex}.  In \cite[Theorem 5.2]{BehC}, it is proved that if $G$ is not a direct product, then it  contains a  rank-1  element.

The class of right-angled Coxeter groups (RACGs) is defined as in (\ref{RAAG})   with additional relations $v^2=1$ for each $v\in V(\Gamma)$. A RACG also acts properly and cocompactly on a CAT(0) cube complex called the Davis complex. From \cite[Proposition 2.11]{BHS} and   \cite[Theorem 2.14]{ChaSul}, a RACG $G$ is not virtually a direct product of groups if and only if it contains a rank-1 element. 

Using the co-compact action, it is clear that if an isometry preserves a geodesic by translation, then the stable length coincides with algebraic length up to a uniform error.
Together \ref{mainthm},    the assertions (1) (2) in Corollary \ref{CAT0Thm} follow  by Lemma \ref{rank1l1}.
  
\paragraph{\textbf{Relatively hyperbolic groups}}

In a relatively hyperbolic group, hyperbolic elements are contracting with respect to the action on the Cayley graph, cf. \cite{GePo4}, \cite{GePo2}. To count conjugacy classes of hyperbolic elements in stable length, we need the following fact.

\begin{lem}
There exists a uniform constant $B>0$ such that for every hyperbolic element $h$, we have $|\tau[h]-\ell_o[h]|\le B.$
\end{lem}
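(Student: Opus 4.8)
The plan is to reduce the statement to a uniform comparison between the stable length and the word length of a minimal conjugacy representative, and then realize that comparison through a periodic admissible path built from the peripheral cosets. First I would use that $\tau$ is a conjugacy invariant with $\tau[h]\le\ell_o[h]$, and that $\ell_o[h]=d(o,h'o)$ for any $h'$ of minimal $S$-length in $[h]$ (the infimum is attained since the action is proper, as $N(o,n)$ is finite). So it suffices to produce $B=B(G,\mathcal P,S)>0$ with $d(o,h'o)-\tau[h']\le B$ whenever $h'$ is a minimal-length element of a hyperbolic conjugacy class; note $h'$ is again hyperbolic.

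The main step applies the periodic admissible path machinery of \S\ref{Section2}--\S\ref{Section3} to the contracting system $\mathbb X=\{gP:g\in G,\ P\in\mathcal P\}$ of left cosets of peripheral subgroups, which by Example \ref{examples}(2) (see \cite{GePo4}) together with the bounded coset penetration property is a uniformly contracting system with bounded intersection; write $\mathcal R$ for its bounded-intersection function. Fix a geodesic $\sigma=[o,h'o]$ and consider the $\langle h'\rangle$-invariant path $\gamma=\bigcup_{i\in\mathbb Z}h'^i\sigma$. I would cut $\sigma$ along its deep excursions into peripheral cosets: maximal subsegments with both endpoints on a common $X\in\mathbb X$ and length exceeding a threshold $D$ become the pieces $p_j$, and the complementary subsegments become the $q_j$. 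Bounded coset penetration gives, with uniform constants: (i) each $q_j$ has projection at most a uniform $\tau_0=\tau_0(\mathcal R)$ onto each of the two flanking deep cosets $X_j,X_{j+1}$ (after leaving a peripheral coset a geodesic returns to it only within bounded overlap), and (ii) consecutive deep cosets along $\gamma$ are distinct. The crucial observation is that $\tau_0$ does not depend on $D$, so one fixes $\tau:=\tau_0$, takes $D$ larger than the threshold $D_0(\tau_0,\mathcal R)$ of Proposition \ref{admissible}, and performs the cut with this $D$; there is no circularity. The $\langle h'\rangle$-periodicity of the cut is automatic, and here minimality of $h'$ is essential: if the last deep excursion of $\sigma$ and the first deep excursion of $h'\sigma$ were into the same coset with overlapping entry/exit data, one could conjugate $h'$ to a strictly shorter element, or a power of $h'$ would lie in a peripheral subgroup, contradicting hyperbolicity. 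Thus $\gamma$ is a periodic $(h',D,\tau_0)$-admissible path; by Proposition \ref{admissible} it is a contracting quasigeodesic, and the argument of Lemma \ref{LengthPeriAdmPath}(2), which uses only the general contracting-system estimates, applied with $\mathbb X$ in place of $\{g\ax(f)\}$ yields $|\tau[h']-\len(\sigma)|\le R$, i.e. $|\tau[h']-d(o,h'o)|\le R$, which is the claim with $B=R$.

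One case needs separate treatment: when $\sigma$, hence all of $\gamma$, has no deep peripheral excursion, so $\gamma$ is a periodic concatenation of genuine geodesics with uniformly bounded penetration into every peripheral coset. Then I would pass to the coned-off Cayley graph $\widehat{\mathrm Y}$, which is $\delta$-hyperbolic with $\delta=\delta(G,\mathcal P,S)$ and on which $G$ acts acylindrically; there $h'$ is loxodromic, and the standard estimate for loxodromic isometries of $\delta$-hyperbolic spaces together with the uniform lower bound on translation length coming from acylindricity controls $\tau[h']$ and $d(o,h'o)$ up to uniform additive error, because bounded-penetration geodesics have $d_S$ and the coned-off metric comparable with uniform constants. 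Equivalently, without deep excursions the orbit $\langle h'\rangle o$ is uniformly contracting, so the geodesic line near it furnished by Lemma \ref{GeodesicNearAxis} has uniform Hausdorff distance to $\langle h'\rangle o$ and Lemma \ref{StableAxisLength} applies uniformly.

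The degenerate case is the main obstacle: it is exactly where one must rule out a per-period multiplicative loss of length in $\gamma$, and minimality of $h'$ is what forbids large ``corners'' of $\gamma$, just as in Coornaert--Knieper's treatment of hyperbolic groups \cite{CoK1,CoK2}. A secondary point requiring care is bookkeeping the uniformity of all constants above --- the bounded-penetration constant $\tau_0$, the separation of consecutive deep cosets, and the consistency of the periodic cut --- so that the resulting $B$ depends only on $(G,\mathcal P,S)$ and not on $h$.
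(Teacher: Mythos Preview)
Your approach differs substantially from the paper's. The paper works directly with the bi-infinite geodesic $\gamma$ between the fixed points $h_-,h_+$ of $h$ in the Bowditch boundary: by \cite[Lemma 2.20]{YANG7} such a geodesic carries a sequence of $(\epsilon,R)$-transitional points with $\epsilon,R$ depending only on $(G,\mathcal P)$, and any transitional point $x$ has the property that the entire orbit $\langle h\rangle x$ lies in a uniform neighborhood of $\gamma$ (since $h^i\gamma$ is again a geodesic between $h_\pm$, and such geodesics fellow-travel near transitional points). The argument of Lemma \ref{StableAxisLength} then gives $|\tau[h]-d(x,hx)|$ uniformly bounded, and since in the Cayley graph $x$ is within bounded distance of some $go$, conjugating by $g$ yields the estimate for $\ell_o[h]$. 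No dichotomy is needed.

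Your route through periodic admissible paths built from peripheral cosets is natural given the paper's machinery, and the non-degenerate case (some deep excursion exists) can indeed be made to work. Two small repairs are needed there: you should pick a \emph{single} deep coset as $X_0$ and let the rest of the period be $q_0$, since Definition \ref{PeriAdmDef} allows only one contracting set per period; and the connecting piece $q_0$ must be taken as the geodesic $[y,h'x]$ rather than the broken path through $h'o$, exactly as in the construction (\ref{PeriodicAdmEQ}) in the proof of Proposition \ref{StablePointedLength}. Bounded projection of this geodesic to $X_0$ and $h'X_0$ then follows from minimality of $h'$ together with bounded intersection of distinct peripheral cosets, much as you indicate.

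The genuine gap is in your degenerate case. Neither alternative you offer closes. First, asserting that ``without deep excursions the orbit $\langle h'\rangle o$ is uniformly contracting'' begs the question: the contraction constant of $\langle h'\rangle o$ is governed by the quasi-isometry constants of $n\mapsto h'^n o$, which are controlled by the ratio $d(o,h'o)/\tau[h']$ --- precisely the quantity you are trying to bound. Second, the coned-off graph argument compares $\hat\tau[h']$ with $\hat d(o,h'o)$ and relates $\hat d(o,h'o)$ to $d_S(o,h'o)$ via bounded penetration of $\sigma$; but the quantity you need is $\tau[h']$ in the word metric, which is computed from $d_S(o,h'^n o)$, and the geodesics $[o,h'^n o]$ may well have deep peripheral excursions even when $\sigma=[o,h'o]$ does not. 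Transferring the estimate back to $d_S$ therefore requires knowing that the word-metric axis of $h'$ itself has uniformly bounded penetration --- which is exactly the transitional-point fact from \cite{YANG7} that the paper invokes. In short, the degenerate case appears to need an input equivalent to what the paper uses, and your argument as written does not supply it.
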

\begin{proof}[Sketch of the proof]
We refer to \cite{YANG7} for related notions and \cite[Proposition 7.8]{DuGe} for a similar argument.
Let $h_-, h_+$ be the fixed points of $h$ in the Bowditch boundary. By \cite[Lemma 2.20]{YANG7}, there exists a sequence of $(\epsilon, R)$-transitional points along any bi-infinite geodesic $\gamma$ between $h_-, h_+$, where $\epsilon, R$ are uniform depending only on $(G, \mathcal P)$. Moreover, if pickup any such transitional point $x$, then $\langle h\rangle x$ is uniformly close to $\gamma$. This implies that $|\tau[h]-\ell_o[h]|$ is uniformly bounded by an argument in Lemma \ref{StableAxisLength}. 
\end{proof}

By \ref{mainthm}, the prime conjugacy formula for hyperbolic elements with stable length follows and Corollary \ref{RHThm} is proved. 

\paragraph{\textbf{SCC covers of moduli spaces}}
We will now explain the conjugacy formula for certain cover of moduli spaces associated to subgroups constructed in \cite[Proposition 6.6]{YANG10}. For the convenience of the reader, we briefly explain the construction.

The subgroup  $H<MCG(S)$ is generated by a free abelian group $A$ generated by Dehn twists and a cyclic group generated by a pseudo-Anosov element $p$. In \cite[Proposition 6.6]{YANG10}, it is proved that $H$ is   a free product $A \star \langle p\rangle$. In order to prove this,    an admissible path in Teichm\"{u}ller space is constructed for each word in the free product $(A\star \langle p\rangle)\setminus (A\cup  \langle p\rangle)$. The admissible path travels alternatively along a sufficiently long segment of the (translates of) axis of $p$. This description shows that the word represents a pseudo-Anosov element in $H$, with the axis uniformly close to a translation axis of $p$.  In other words, each closed geodesic on the cover $Teich(S)/H$ of moduli space intersects  a finite neighborhood of the closed geodesic associated to $p$.

Since each pseudo-Anosov element in $H$ admits an axis uniformly close to the basepoint $o$, it follows that the stable length is within uniform additive error from the algebraic length. Thus, Corollary \ref{ModThm} now follows from the second item of \ref{mainthm}. The same proof also gives the general theorem \ref{CompactSupport}.

\subsection{SCC actions with infinitely many closed geodesics of bounded length}\label{SCCinfty}
We now give a geometrically infinite hyperbolic 3-manifold with infinitely many closed geodesics of bounded length and with a SCC deck transformation group action. This is essentially due to Peign\'e in \cite{Peigne3}, where geometrically infinite examples with finite Bowen-Margulis-Sullivan measure are constructed as the Schottky product of two Kleinian groups. For the convenience of the reader, we give some details about the construction. 

Let $H$ be an infinitely generated discrete group acting on the hyperbolic plane $\mathbb H^2$ such that $\e H\le 1$ and $\mathbb H^2/H$ contains infinitely many closed geodesics with bounded length.  Such subgroups can be chosen as the fundamental group of an infinite regular cover of a compact hyperbolic surface.

Via the Poincar\'e extension, the group $H$ acts on $\mathbb H^3$ with the limit set $\Lambda H$ contained in a circle. We now take a discrete group $G$ of divergent type  acting on $\mathbb H^3$ with $\e G>1$ and the limit set $\Lambda G$ disjoint from $\Lambda H$. Note that $H$ (resp. $G$) acts properly discontinuously outside $\Lambda H$ (resp. $\Lambda G$). By taking sufficiently deep finite index subgroups, we can have the following property: any $h\in H$ maps $\Lambda G$ into a small neighborhood of $\Lambda H$ and the same for $g\in G$. Thus, $H$ and $G$ stay at a ping-pong position so they generate a free product $\Gamma=G\star H$. 

The (infinitely generated) $\Gamma$ acts on $\mathbb H^3$ with infinitely many closed geodesics of bounded length on $\mathbb H^3/G$. Since $G$ is of divergent type,  we have $\e \Gamma > \e G$. Thus, by \cite[Proposition 6.3]{YANG10}, $\Gamma$ acts by a SCC action on $\mathbb H^3$.
 
The above construction applies in higher dimension, but we do not know of a finitely generated example with these properties. By Ahlfors measure theorem,  finitely generated examples cannot exist in dimension 3. (cf. \cite[Introduction]{Peigne}). 

Moreover, such examples could be produced in the  acylindrical actions on (possibly non-proper) hyperbolic spaces. 

At last, we would like to show that such phenomenon exists in the class of groups acting acylindrically on hyperbolic spaces (termed \textit{acylindrically hyperbolic groups} by Osin \cite{Osin6}). This is  pointed out  by the referee that  Watanabe \cite[Theorem 1.3]{Wat}  constructed infinitely many pseudo-Anosov mapping classes with same stable length on curve graphs (since the stable length spectrum is discrete by Bowditch \cite{Bow6}).  
\begin{lem}\label{InftyConjClassesLem}
Let $G$ be a non-elementary group acting acylindrically on a  hyperbolic space $X$ so that some infinite subgroup $H$ has bounded orbits. Then there exists infinitely conjugacy classes of loxodromic elements in $G$ with   bounded stable length.  
\end{lem}
\begin{proof} 
Fix a basepoint $x\in X$ so that $Hx$ has diameter at most $D>0$. Let $R>0$ be the constant in Definition \ref{HypAclindDef}. Let $f$ be a loxodromic element and $E(f)$ be the maximal elementary subgroup of $G$ containing $f$. Then the quasi-geodesic $\gamma:=E(f)x$ gives the quasi-axis of $f$. The following is the key claim.  There are infinitely many distinct $E(f)$-cosets $g_n\cdot E(f)$ over $g_n\in H$ so that $g_n\gamma, g_m \gamma$ fellow travel within $D$-distance at most $R$ for any $g_n\ne g_m$. Indeed, if not, we choose a point $y\in g_1\gamma$ such that $d(x, y)>R$.  The $R$-long fellow travel of $g_n\gamma$ and $g_1\gamma$ implies that $d(g_nx,x), d(g_ny,y)\le D$ holds for infinitely many $g_n$. This contradicts to the definition of acylindrical action.  

Since $h_n\gamma$ and $\gamma$ diverge after the time $R$, it is easy exercise in hyperbolic geometry that there exists a big power $\tilde f$ of $f$ (relative to $R$) such that $h_n:=\tilde f g_n $ are loxodromic elements for $n\ge 1$. Similar to periodic admissible paths, the quasi-axis denoted by $\gamma_n$ of $h_n$ has the fundamental domain $[o, \tilde f o][\tilde fo,\tilde f g_no]$, where $[o, g_no]$ has bounded length by $D$. Hence, all $h_n$ have uniformly bounded stable length. 
 
It remains to see that $\{h_n\}$ are contained in infinitely many distinct conjugacy classes. In fact, we shall show that no two $h_n\ne h_m$ lie in the same conjugacy class.  Indeed, if $h'h_nh'^{-1}=h_m$ for some $h'\in G$, consider the configuration  of the axis  $\gamma_m$ and $h'\gamma_n$ as in Figure \ref{figure3} (with $g$  replaced with $g_n$, $f$ with $\tilde f$, etc). Since $d(x, g_nx)\le D$, we conclude that $h'$ must send subsegments  $[x, \tilde fx]$ and $\tilde fg_n [x, \tilde f x]$ of $\gamma_n$ into a uniform neighborhood of $[x, \tilde fx]$ and $\tilde fg_m[x, \tilde fx]$ in $\gamma_m$. It is known that if the action is acylindrical, the set of translated quasi-axis $\{g\gamma: g\in G\}$ has bounded intersection. This implies $h'\gamma=\gamma$ and $h'\tilde fg_n\gamma=\tilde f g_m\gamma$. Since $\tilde f\in E(f)$ and $\gamma=E(f)x$, we see  $h'\in E(f)$ and thus $g_nE(f)=g_m E(f)$.  This contradicts to the choice of $g_n$ in different $E(f)$-cosets. The proof is complete.
\end{proof}

\subsection{Transcendental growth series} Theorem \ref{CongugacySeries} is a consequence of the prime conjugacy growth formula. 
 
 Define $\mathcal A(n, \Delta)=\{[g]\in \mathcal C: |\len_o[g]-n|\le \Delta\}$ for $n, \Delta\ge 0$. Each conjugacy class $[g]$ is contained in a uniform number of annuli {sets} $\mathcal A(n, \Delta)$ where $n>0$. Hence,  for fixed $\Delta>1$,  
$$
\mathcal P(z)=\sum_{[g]\in  G}     z^{\len_o[g]}  \asymp_\Delta \sum_{n\ge 0}   \sharp  \mathcal A(n, \Delta) z^n.
$$ 
By \ref{mainthm}, the prime conjugacy growth formula holds for the annulus 
$$
\sharp \mathcal A(n, \Delta) \asymp_\Delta \frac{\exp(n \e G)}{n}.
$$

The proof of  Theorem \ref{CongugacySeries} follows immediately from {the} asymptotics of the coefficients of an algebraic growth series in \cite[Theorem D]{Flaj}. See the proof of \cite[Theorem 1.1]{AntCio} for relevant details.



\bibliographystyle{amsplain}
 \bibliography{bibliography}

\end{document}